\numberwithin{equation}{section}
\let\al=\alpha
\let\b=\beta
\let\d=\delta
\let\s=\sigma
\let\f=\frac
\let\om=\omega
\let\G= \Gamma
\let\ep=\epsilon
\let\Om=\Omega
\let\wt=\widetilde
\let\wh=\widehat
\let\pa=\partial
\let\va=\varphi
\def\cA{\mathcal{A}}
\def\cD{\mathcal{D}}
\def\cH{\mathcal{H}}
\def\cI{\mathcal{I}}
\def\cS{\mathcal{S}}
\def\no{\noindent}
\def\bbT{\mathbb{T}}
\newcommand{\beq}{\begin{equation}}
\newcommand{\eeq}{\end{equation}}
\newcommand{\beqno}{\begin{equation*}}
\newcommand{\eeqno}{\end{equation*}}
\newcommand{\ben}{\begin{eqnarray}}
\newcommand{\een}{\end{eqnarray}}
\newcommand{\beno}{\begin{eqnarray*}}
\newcommand{\eeno}{\end{eqnarray*}}
\newtheorem{theorem}{Theorem}[section]
\newtheorem{definition}[theorem]{Definition}
\newtheorem{lemma}[theorem]{Lemma}
\newtheorem{proposition}[theorem]{Proposition}
\newtheorem{corol}[theorem]{Corollary}
\newtheorem{remark}[theorem]{Remark}
\begin{document}
\begin{CJK*}{GBK}{song}

\title[]{Linear damping and depletion in flowing plasma with strong sheared magnetic fields}

\author{Han Liu}
\address{Department of Mathematics, New York University Abu Dhabi, Saadiyat Island, P.O. Box 129188, Abu Dhabi, United Arab Emirates.}
\email{hl4294@nyu.edu}
\author{Nader Masmoudi}
\address{Department of Mathematics, New York University Abu Dhabi, Saadiyat Island, P.O. Box 129188, Abu Dhabi, United Arab Emirates; Courant Institute of Mathematical Sciences, New York University, 251 Mercer Street, New York, NY 10012, USA.}
\email{masmoudi@cims.nyu.edu}
\author{Cuili Zhai}
\address{School of Mathematics and Physics, University of Science and Technology Beijing, 100083, Beijing, P. R. China.}
\email{zhaicuili035@126.com, cuilizhai@ustb.edu.cn}
\author{Weiren Zhao}
\address{Department of Mathematics, New York University Abu Dhabi, Saadiyat Island, P.O. Box 129188, Abu Dhabi, United Arab Emirates.}
\email{zjzjzwr@126.com, wz19@nyu.edu}

\maketitle

\begin{abstract}
In this paper, we study the long-time behavior of the solution for the linearized ideal MHD around sheared velocity and magnetic field under Stern stability condition. We prove that the velocity and magnetic field will converge to sheared velocity and magnetic field as time approaches infinity. Moreover a new depletion phenomenon is proved: the horizontal velocity and magnetic field at the critical points will decay to $0$ as time approaches infinity.
\end{abstract}

\section{Introduction}
The appearance of large coherent structures is an important phenomena in the magnetic fluid. The study of the long-time behavior of MHD waves is a very active field in physics and mathematics \cite{Heyvaerts1982,Lifschitz1989,Zohm}. 

\subsection{The linearized MHD system}

In this paper, we consider the magnetic fluid described by the two-dimensional incompressible ideal MHD equations on the periodic domain $\mathbb{T}^2=\{(x,y)\vert x\in\mathbb{T}, y\in\mathbb{T}\}$
\begin{equation}\label{ideal MHD}
 \left\{\begin{array}{l}
\partial_t \mathcal U+ \mathcal U\cdot\nabla \mathcal U- \mathcal H\cdot\nabla \mathcal H+\nabla \mathcal P=0,\\
\partial_t \mathcal H+ \mathcal U\cdot\nabla \mathcal H- \mathcal H\cdot\nabla \mathcal U=0,\\
\nabla\cdot \mathcal U=0,\ \ \nabla\cdot \mathcal H=0,
\end{array}\right.
\end{equation}
with initial data $\mathcal U(0,x,y)$ and $\mathcal H(0,x,y)$. 
Here $\mathcal U=(\mathcal U_1, \mathcal U_2)$, $\mathcal H=(\mathcal H_1, \mathcal H_2)$ and $\mathcal P$ denote the velocity field, magnetic field, and the total pressure of the magnetic fluid, respectively.

System \eqref{ideal MHD} has an equilibrium $\mathcal U_s=(u(y),0)$, $\mathcal H_s=(b(y),0)$, $\mathcal P_s= \text{const.}$. We shall focus on the asymptotic behavior of the linearized 2D MHD equations around this equilibrium, which take the form
 \begin{equation}\label{linearized MHD}
\left\{\begin{array}{l}
\partial_tU_1+u\partial_xU_1+\partial_xp+u'U_2-b\partial_xH_1-b'H_2=0,\\
\partial_tU_2+u\partial_xU_2+\partial_yp-b\partial_xH_2=0,\\
\partial_tH_1+u\partial_xH_1+b'U_2-b\partial_xU_1-u'H_2=0,\\
\partial_tH_2+u\partial_xH_2-b\partial_xU_2=0,\\
\nabla\cdot U=0,\ \ \nabla\cdot H=0.
\end{array}\right.
\end{equation}

We introduce the vorticity $\omega =\partial_x U_2-\partial_y U_1$ and the current density $j=\partial_x H_2-\partial_y H_1$ which satisfy the following equations:
\begin{equation}\label{vorticity and current density}
 \left\{\begin{array}{l}
\partial_t\om+u\partial_x\om-b\partial_xj=u''U_2-b''H_2,\\
\partial_tj+u\partial_xj-b\partial_x\om=b''U_2-u''H_2+u'\partial_xH_1
-u'\partial_yH_2+b'\partial_yU_2-b'\partial_xU_1.
\end{array}\right.
\end{equation}
We further introduce the stream function $\psi$ and the magnetic potential function $ \phi,$ satisfying $ U=(\partial_y \psi, -\partial_x \psi)$, $\om=-\Delta \psi$ and  $H=(\partial_y \phi, -\partial_x \phi)$, $j=-\Delta \phi,$ which allow us to derive the following system, satisfied by $( \psi,  \phi)$:
\begin{equation}\label{psi phi}
 \left\{\begin{array}{l}
\partial_t(\Delta\psi)+u\partial_x(\Delta\psi)-b\partial_x(\Delta\phi)=u''\partial_x\psi
-b''\partial_x\phi,\\
\partial_t(\Delta\phi)+u\partial_x(\Delta\phi)-b\partial_x(\Delta\psi)=b''\partial_x\psi
-u''\partial_x\phi-2u'\partial_x\partial_y\phi
+2b'\partial_x\partial_y\psi.
\end{array}\right.
\end{equation}
Taking the Fourier transform in $x$ and inverting the operator $(\partial_y^2-\alpha^2),$ we rewrite the system as 
\begin{equation}\label{Fourier equations2}
\left\{\begin{aligned}
&\partial_t\Big(
  \begin{array}{ccc}
    \widehat{\psi}\\
    \widehat{\phi}\\
  \end{array}
\Big)(t,\al,y)=-i\alpha M_{\al}\Big(
  \begin{array}{ccc}
     \widehat{\psi}\\
    \widehat{\phi}\\
  \end{array}
\Big)(t,\al,y),\\
&(\widehat{U}_1,\widehat{U}_2)=(\pa_y\widehat{ \psi},-i\al \widehat{ \psi}), \ (\widehat{H}_1, \widehat{H}_2)=(\pa_y\widehat{ \phi},-i\al \widehat{ \phi}),
\end{aligned}\right.
\end{equation}
where $\alpha\neq0$ and 
\beq\label{eq: matrix}
M_{\al}=-\Delta_{\al}^{-1}
\left[
\begin{matrix}
& u''-u\Delta_{\al} & -b''+b\Delta_{\al}\\
& b\Delta_{\al}+b''+2b'\pa_y & -u\Delta_{\al}-u''-2u'\pa_y
\end{matrix}
\right].
\eeq

For the homogenous equilibrium $u=0, \, b(y)= \text{const.}$, $\widehat {U}_2$ and $\widehat {H}_2$ satisfy a 1-D wave equation, which is stable but exhibits no decay. There are few rigorous mathematical results on the non-flowing plasma with inhomogeneous sheared magnetic field. In \cite{TG1973}, Tataronis and Grossmann predicted that the vertical components of velocity and magnetic field may decay by phase mixing, to which a mathematically rigorous proof was given by Ren and Zhao in \cite{RZ2017}, under the condition that the magnetic field is positive and strictly monotone. If the positivity assumption on the magnetic field is removed, which allows the direction of the sheared magnetic field to change, then it turns out that magnetic reconnection occurs in infinite time, generating the magnetic island. This phenomenon was predicted by Hirota, Tatsuno and Yoshida \cite{HTY05} and later justified by Zhai, Zhang and Zhao \cite{ZZZ2018}. For the flowing plasma $u\neq 0$, fewer mathematical rigorous results are available. We refer to \cite{HTY05,RWZ2020, ZZZ2018} for the long time behaviors of the solutions to the MHD equations linearized around a flowing plasma. 

\no{\bf Notations:} Let us specify the notations to be used throughout the paper. We denote by $A \lesssim B$ an estimate of the form $A \leq C B$ and by $A \sim B$ an estimate of the form $C^{-1} B \leq A \leq C B,$ where $C$ is a constant. Given a function $f(x,y),$ we denote its Fourier transform in $x$-variable as $\wh f(\alpha, y)=\f{1}{2\pi}\int_{\mathbb{T}}e^{-ix\al}f(x,y)dx,$ where $\alpha$ is the wave number. We shall use the Japanese bracket notation $\langle x \rangle:= \sqrt{|x|^2+1}.$

\subsection{Vertical damping and horizontal depletion}
In this paper, we focus on the long time behavior of the solution to the linearized MHD equation \eqref{Fourier equations2}. 

Our first main result states as follows: 
\begin{theorem}[Vertical damping]\label{main thm2}
Let $u, b\in C^3(\mathbb{T})$ be such that $b > |u| \geq 0$ and the critical points of $(u \pm b)$ are non-degenerate. Let $\al\neq 0$ be a fixed wave number and let $\left(\widehat{ \psi},\widehat{ \phi} \right)$ solve \eqref{Fourier equations2} with initial data $\left(\widehat{ \psi}_0,\widehat{ \phi}_0 \right)\in (H^3\times H^3)$. Then the following space-time estimate holds:
\begin{equation}\label{estsptm}
\left \| \left(\widehat{ \psi},\widehat{ \phi} \right) \right\|_{H^1_tL^2_y}\leq C_{\al} \left\| \left(\widehat{ \psi}_0,\widehat{ \phi}_0 \right) \right\|_{H^3_y}. 
\end{equation}
In particular, $\lim\limits_{t\to \infty}\left\|\left(\widehat {U}_2,\, \widehat{H}_2\right)\right\|_{L^2_y}=0$.
\end{theorem}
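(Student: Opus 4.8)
The plan is to study the propagator $e^{-i\alpha tM_\alpha}$ via the boundary values of its resolvent on the continuous spectrum, convert this into the space--time bound \eqref{estsptm} by Plancherel in $t$, and then deduce the limit from \eqref{estsptm} by a soft argument. The natural first move is to pass to the Els\"asser-type unknowns $\theta^{\pm}=\widehat\psi\pm\widehat\phi$, i.e.\ the Fourier stream functions of $Z^{\pm}=U\pm H$; these diagonalize the transport part of \eqref{psi phi}, which becomes two Rayleigh-type operators with advection speeds $u\mp b$, the terms coupling the $+$ and $-$ channels being of lower order in $\partial_y$ and hence relatively compact. Since $b>|u|\ge 0$ forces $u-b<0<u+b$ pointwise, $\mathrm{Ran}(u+b)$ and $\mathrm{Ran}(u-b)$ are \emph{disjoint} compact intervals, so the spectrum of $M_\alpha$ is $\mathrm{Ran}(u+b)\cup\mathrm{Ran}(u-b)$ and is purely continuous: the Stern condition $b^2>u^2$ excludes unstable and neutral modes, and the separation of the two intervals prevents any resonance bridging the channels.

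For $c$ off the two spectral intervals I would analyze the resolvent equation $(c-M_\alpha)F=G$, which, on unwinding \eqref{eq: matrix}, is a second-order $2\times 2$ ODE system in $y$ with coefficients singular precisely where $c=u(y)\pm b(y)$. Using the Stern condition to rule out nontrivial homogeneous solutions, one builds the Green's function and estimates $F$ in $L^2_y$ by $G$ in an $H^3$-type norm, uniformly in $c$, and then shows these bounds admit boundary limits as $c=\lambda/(i\alpha)\pm i0$ (the limiting absorption principle). Away from critical values of $u\pm b$ the boundary resolvent loses at most a logarithm; near a value $c_0=u(y_0)\pm b(y_0)$ with $(u\pm b)'(y_0)=0$ the coefficients carry a double singularity, and a local Airy-type normal form shows the solution acquires a factor $|c-c_0|^{-\beta}$, where the assumed non-degeneracy $(u\pm b)''(y_0)\ne 0$ is exactly what forces $\beta<\tfrac12$, i.e.\ square-integrability in $c$. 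The three $y$-derivatives lost on the data are what absorb this $c$-singularity together with the endpoint behaviour at the extrema of $u\pm b$.

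With the limiting resolvent in hand, represent the solution by the Dunford integral collapsed onto the spectrum,
\[
\binom{\widehat\psi}{\widehat\phi}(t)=\frac{1}{2\pi i}\int_{\mathbb R}e^{-it\lambda}\Big[(\alpha M_\alpha-\lambda-i0)^{-1}-(\alpha M_\alpha-\lambda+i0)^{-1}\Big]\binom{\widehat\psi_0}{\widehat\phi_0}\,d\lambda ,
\]
and apply Plancherel in $t$: $\|(\widehat\psi,\widehat\phi)\|_{L^2_tL^2_y}^2$ equals, up to a constant, the squared $L^2_\lambda L^2_y$-norm of the resolvent jump applied to the data, which by the resolvent bounds above is $\lesssim_\alpha\|(\widehat\psi_0,\widehat\phi_0)\|_{H^3_y}^2$. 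Each entry of $M_\alpha$ in \eqref{eq: matrix} is $\Delta_\alpha^{-1}$ composed with an operator of order $\le 2$, so $M_\alpha$ is bounded on $L^2_y$; hence $\partial_t(\widehat\psi,\widehat\phi)=-i\alpha M_\alpha(\widehat\psi,\widehat\phi)\in L^2_tL^2_y$ with the same bound, which yields \eqref{estsptm}. Finally, set $g(t)=\|(\widehat\psi,\widehat\phi)(t)\|_{L^2_y}$; then $g^2\in L^1_t$ while $(g^2)'=2\,\mathrm{Re}\,\langle\partial_t(\widehat\psi,\widehat\phi),(\widehat\psi,\widehat\phi)\rangle_{L^2_y}\in L^1_t$, so $g^2(t)$ converges as $t\to\infty$ and the limit must be $0$; since $(\widehat U_2,\widehat H_2)=-i\alpha(\widehat\psi,\widehat\phi)$, this is $\lim_{t\to\infty}\|(\widehat U_2,\widehat H_2)\|_{L^2_y}=0$.

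The heart of the matter is the uniform limiting-absorption analysis at the turning points --- the non-degenerate critical points of $u\pm b$: one must pin down the blow-up rate of the boundary resolvent there and keep it strictly below the $L^2_\lambda$ threshold, which requires a local Airy-type normal form that stays uniform as the spectral parameter sweeps through a critical value or an extremum, carried out for the two coupled channels at once, the coupling being perturbative only because $b>|u|$ keeps the two spectral branches apart. By contrast the off-spectrum ODE solvability, the Plancherel step, and the concluding soft-decay argument are comparatively routine; and this same turning-point mechanism is what produces the depletion of the horizontal velocity and magnetic field at the critical points announced in the abstract.
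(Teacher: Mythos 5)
Your high-level architecture (resolvent representation, limiting absorption, Plancherel in $t$, then the soft $g^2\in L^1$, $(g^2)'\in L^1$ argument for the limit) coincides with the paper's, and the last two steps are fine as written. But there is a genuine gap at the heart of the argument, and it is twofold. First, you propose to decouple via the Els\"asser combinations $\widehat\psi\pm\widehat\phi$ and treat the cross terms as ``lower order, hence relatively compact'' perturbations of two Rayleigh-type operators. Relative compactness locates the essential spectrum but gives no uniform resolvent bound near it, and here the perturbative framing is actually backwards: the paper shows that the decoupled (toy) model \eqref{eq:toy model} \emph{fails} the space-time estimate \eqref{estsptm} (its decay is only $t^{-1/2}$, which is not in $L^2_t$), and that at critical points the coupling terms are responsible for a $\tfrac14$-\emph{improvement} of the resolvent singularity, not a perturbation to be absorbed. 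The paper avoids this issue entirely by an exact algebraic reduction: writing $\Phi_1=b\Phi$ and $\Psi_1=(u-c)\Phi+\widehat\phi_0/b$ collapses the $2\times2$ resolvent system to the single scalar Sturmian equation \eqref{eq:ODE1} with coefficient $(u-c)^2-b^2=(Z_+-c)(Z_--c)$. Without this reduction (or an equally exact substitute), your Green's-function construction for the coupled system has no route to the needed bounds near the critical layers.

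Second, your local analysis at a non-degenerate critical point $y_0$ of $u\pm b$ is misidentified. The degeneracy of the coefficient is $(Z_+-c)(Z_--c)\sim\bigl((y-y_0)^2-\sigma(c)^2\bigr)\cdot(Z_-(y_0)-c)$, i.e.\ two coalescing simple zeros of a degenerate Sturm--Liouville coefficient, not a simple turning point of a Schr\"odinger operator; an Airy normal form is the wrong model, and the exponent $\beta<\tfrac12$ is asserted rather than derived. The paper's Proposition \ref{prop: lap} in fact proves the stronger statement that $\|\Phi(\cdot,c)\|_{L^2_y}\le C\|F\|_{H^1_y}$ \emph{uniformly} in $c$ (no blow-up of the $L^2_y$ norm at all; only the pointwise values at $y_0$ blow up, at rates $|c-c_0|^{-1/4}$ and $|c-c_0|^{-3/4}$), and this is obtained by a compactness/contradiction scheme combined with explicit solution formulae built from the homogeneous solutions $\varphi^{\ell},\varphi^{r}$ and delicate estimates of the quantities $I^{\ell,r}(c)$ and $\mathcal D(c)$ in Lemma \ref{38} and Corollary \ref{311}. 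This uniform bound is precisely the non-routine content of the theorem, and your proposal does not supply an argument for it.
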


\begin{remark}
The condition $|u|<|b|$ is called Stern stability condition (see \cite{Stern}).
\end{remark}
\begin{remark}
Formally, the space-time estimate may indicate that $ \left\| \left(\widehat {U}_2,\, \widehat{H}_2 \right) \right \|_{L^2}\lesssim \frac{1}{\langle t \rangle^{\beta}}$ with $\b>{\f12}$. The study of the precise decay rate shall be our forthcoming work. 
\end{remark}
\begin{remark}\label{energy conservation rem}
For the case of flowing plasma with constant velocity or non-flowing plasma $(u=\mathrm{const}.)$, it holds that
\begin{equation}\label{engcsv}
\|U(t),H(t)\|_{H^k_xL^2_y}\sim \|U_0,H_0\|_{H^k_xL^2_y}, \ k\geq 0,
\end{equation}
which implies linear growth of vorticity and current density, i.e., 
\beno
\|\om(t),j(t)\|_{L^2_{x,y}}\lesssim \langle t\rangle \|\om_{0},j_0\|_{L^2_{x,y}}. 
\eeno
The proof can be found in Appendix C.
By \eqref{engcsv}, the total energy is almost conserved. Hence, the vertical damping in Theorem \ref{main thm2} shows that energy is transferred from the vertical direction to the horizontal direction. Whether similar energy conservation results are true for the flowing plasma with $u \not =\mathrm{const.}$ remains an open question.  
\end{remark}


The vertical damping is induced by a certain mixing mechanism similar to the vorticity mixing that leads to inviscid damping for linearized Euler equations (see \cite{jia2020linear_Gevrey,jia2020linear,WZZ18,WZZ1,WZZ2,CZ17}). Readers may consult \cite{Bedrossian_2015,Deng_Masmoudi_2018,ionescu_jia_2019_couette,ionescu2020nonlinear,Lin_2011,masmoudi_zhao2020nonlinear} for recent progress in nonlinear inviscid damping.

To better illustrate the mixing mechanism, let us recall the system in terms of $({U}_1, {H}_1)$:
\begin{equation}\label{linearized MHD U_1, H_1}
\left\{\begin{array}{l}
\partial_t U_1+u\pa_xU_1-b\partial_x H_1=L_1,\\
\partial_t H_1+u\pa_xH_1-b\partial_x U_1=L_2,
\end{array}\right.
\end{equation}
where $(L_1,L_2):= \left(b' H_2-u'U_2-2\pa_x\Delta^{-1}(b'\pa_x  H_2-u'\pa_xU_2), u'H_2-b' U_2 \right)$ can be seen as nonlocal forcing terms depending on ${U}_2$ and ${H}_2.$

By the incompressibility condition, we can check that
\beno
\|(\pa_x {U}_1,\pa_x {H}_1)\|_{L^2_xH^{-1}_y}\sim \|( {U}_2, {H}_2)\|_{L^2_{x,y}}.
\eeno
Then the mixing of $({U}_1\pm {H}_1)$ would lead to the linear damping of $({U}_2,{H}_2).$

Let us consider a toy model, obtained by neglecting the nonlocal forcing terms $(L_1,L_2)$ in the linearized system \eqref{linearized MHD U_1, H_1}, i.e.,
\begin{equation}\label{eq:toy model}
 \left\{\begin{array}{l}
\partial_t U_1+u\pa_xU_1-b\partial_x H_1=0,\\
\partial_t H_1+u\pa_xH_1-b\partial_x U_1=0.
\end{array}\right.
\end{equation}
It is then easy to see that the Els\"asser variables $\mathcal{Z}_1^{\pm} :=U_1\pm H_1$ satisfy certain transport equations and then 
\ben\label{eq:initial}
\begin{split}
\left (\wh{U}_1+\wh{H}_1\right)(t,\al,y )=\wh{\mathcal{Z}}^{+}_{1, in}(\al,y)e^{-i\al (u-b)t},\\
\left (\wh{U}_1-\wh{H}_1\right)(t,\al,y )=\wh{\mathcal{Z}}^{-}_{1, in}(\al,y)e^{-i\al (u+b)t},
\end{split}
\een
with $\wh{\mathcal{Z}}^\pm_{1, in}$ denoting the initial data.

Regarding the toy model \eqref{eq:toy model}, we have the following conclusions. 
\begin{lemma}\label{lem:toy model}
Let $u, b \in C^{3}(\mathbb{T})$ be such that $u\pm b$ have only non-degenerate critical points. Then the solution of \eqref{eq:toy model} with initial data $({U}_{1,in},{H}_{1,in})$ satisfies 
\begin{equation}\label{1/2}
\|(\pa_x{U}_1,\pa_x{H}_1)\|_{L^2_xH^{-1}_y}\lesssim \f{1}{\langle t \rangle^\frac{1}{2}}\|({U}_{1,in},{H}_{1,in})\|_{H^{\f12}_xH^{1}_y}.
\end{equation}
Moreover, if the initial data $({U}_{1,in},{H}_{1,in})$ vanish at all the critical points of $(u\pm b)$, then it holds that 
\begin{equation}\label{-1}
\|(\pa_x{U}_1,\pa_x{H}_1)\|_{L^2_xH^{-1}_y}\lesssim \f{1}{\langle t \rangle}\|({U}_{1,in},{H}_{1,in})\|_{H^{-1}_xH^{2}_y}.
\end{equation}
\end{lemma}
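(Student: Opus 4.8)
The plan is to reduce both estimates to a stationary-phase bound for oscillatory integrals of the form $\int_{\mathbb T} g(y)\,e^{-i\alpha f(y)t}\,dy$, where $f$ is either $u-b$ or $u+b$ and $g$ is built from the initial data. From \eqref{eq:initial}, the Els\"asser variables $\wh{\mathcal Z}^\pm_1$ evolve by pure multiplication by $e^{-i\alpha(u\mp b)t}$, so $\wh U_1 = \tfrac12(\wh{\mathcal Z}^+_1 + \wh{\mathcal Z}^-_1)$ and $\wh H_1 = \tfrac12(\wh{\mathcal Z}^+_1 - \wh{\mathcal Z}^-_1)$ are each a sum of two such oscillatory functions of $y$. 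The key observation is that, by incompressibility, the $L^2_x H^{-1}_y$ norm of $(\partial_x U_1, \partial_x H_1)$ is, mode by mode in $\alpha$, comparable to $\|(\wh U_2,\wh H_2)\|_{L^2_y}$, but it is cleaner here to work directly: $\|\partial_x \wh U_1\|_{H^{-1}_y}$ is controlled by applying $(\partial_y^2-\alpha^2)^{-1}\partial_y$-type smoothing, and the decay will come entirely from the oscillatory factor. So after an $\ell^2_\alpha$ summation it suffices to prove, for each fixed $\alpha$ and each profile $g\in H^1_y$ (resp. $H^2_y$), a bound of the shape $\big\|(\partial_y^2-\alpha^2)^{-1}\partial_y\big(g\, e^{-i\alpha f t}\big)\big\|_{L^2_y}\lesssim \langle t\rangle^{-1/2}\|g\|_{H^1_y}$ in general, and $\lesssim \langle t\rangle^{-1}\|g\|_{H^2_y}$ when $g$ vanishes at the critical points of $f$.

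For \eqref{1/2}, I would use the standard van der Corput lemma: since $f=u\pm b\in C^3$ has only non-degenerate critical points, near each critical point $f'' \neq 0$, so $\int g\,e^{-i\alpha f t}\,dy$ decays like $t^{-1/2}$ with a constant controlled by $\|g\|_{C^0}$ plus $\|g'\|_{L^1}$ (the second-derivative van der Corput estimate, localized by a partition of unity separating the finitely many critical points from the region where $|f'|\gtrsim 1$, where one integrates by parts once to get $t^{-1}$). Embedding $H^1_y(\mathbb T)\hookrightarrow C^0\cap W^{1,1}$ handles the $g$-dependence; the inverse-Laplacian smoothing only helps. The $\ell^2$-in-$\alpha$ sum converges because the stated norm on the right already carries $H^{1/2}_x$. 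For \eqref{-1}, the gain comes from the vanishing hypothesis: if $g(y_*)=0$ at every critical point $y_*$ of $f$, write $g(y) = (y-y_*)\,\tilde g(y)$ locally with $\tilde g\in H^1$ (this is where one $y$-derivative is spent, explaining the $H^2_y$ on the right), and then $(y-y_*)$ vanishing precisely kills the leading stationary-phase term, upgrading the local contribution from $t^{-1/2}$ to $t^{-1}$; away from critical points integration by parts already gives $t^{-1}$ (indeed better). Summing the local pieces and using a partition of unity gives the full $\langle t\rangle^{-1}$ bound.

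The main obstacle I anticipate is bookkeeping the interaction between the nonlocal smoothing operator $(\partial_y^2-\alpha^2)^{-1}$ and the oscillation: one must be careful that applying this operator to $g\,e^{-i\alpha f t}$ does not reintroduce factors of $t$ (it does not, since the operator acts in $y$ and the phase is $t$-linear, but making the constant uniform and getting the right power of $\langle\alpha\rangle$ so the $\alpha$-sum closes requires care). A secondary technical point is the regularity threshold: to run van der Corput with constants depending only on $\|g\|_{H^1_y}$ (not $C^1$) one needs the $W^{1,1}$ form of the lemma, and for \eqref{-1} one needs the factorization $g=(y-y_*)\tilde g$ to be quantitative in $H^1\to H^2$; both are routine but should be stated carefully. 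I would also need to confirm that $C^3$ regularity of $u,b$ is exactly enough — the second-derivative van der Corput estimate needs $f\in C^2$ with a modulus of continuity on $f''$, which $C^3$ comfortably supplies, and the non-degeneracy ensures the critical points are isolated so the partition of unity has finitely many pieces.
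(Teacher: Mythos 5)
The paper does not actually prove this lemma: it cites the ``dual method'' of \cite{WZZ1} for \eqref{1/2} and asserts that \eqref{-1} follows by the same method. Your argument --- duality between $H^{-1}_y$ and $H^1_y$, reduction to oscillatory integrals $\int g\,\phi\, e^{-i\al (u\mp b)t}\,\mathrm{d}y$, van der Corput with $W^{1,1}$ amplitude (plus a partition of unity isolating the finitely many non-degenerate critical points) for the $\langle t\rangle^{-1/2}$ rate, and the factorization $g=(y-y_*)\tilde g$ followed by one integration by parts, writing $(y-y_*)=\frac{y-y_*}{(u\pm b)'}\,(u\pm b)'$, for the $\langle t\rangle^{-1}$ rate --- is precisely that dual method, so in substance your proposal matches the intended proof. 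One caution on the point you yourself flagged: the per-mode gain is $(|\al| t)^{-1/2}$, resp. $(|\al| t)^{-1}$, and after multiplying by the $|\al|$ coming from $\pa_x$ this gives $|\al|^{1/2} t^{-1/2}$, which matches the $H^{1/2}_x$ weight on the right of \eqref{1/2}, but only $|\al|^{0}\, t^{-1}$ in the second case, so the method naturally produces $L^2_x H^2_y$ rather than the stated $H^{-1}_x H^2_y$ on the right of \eqref{-1}; you should either read the $x$-indices as fixed-$\al$ statements with $\al$-dependent constants (as elsewhere in the paper) or adjust the $x$-regularity in the final summation accordingly.
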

The proof of \eqref{1/2} can be found in \cite{WZZ1}; via the same dual method one can prove \eqref{-1}. In fact, the decay rate of $t^{-1/2}$ for the toy model \eqref{eq:toy model} is optimal,  as we know, via the classical stationary phase approximation (see Chapter VIII of \cite{Stein-book}), that there exists a class of initial data such that the corresponding solutions satisfy 
\beno
\|(\pa_x{U}_1,\pa_x{H}_1)\|_{L^2_xH^{-1}_y}\sim \f{1}{\langle t\rangle^{\f12}}.
\eeno 

The space-time estimate in Theorem \eqref{main thm2}, however, fails to hold for the toy model \eqref{eq:toy model}. Exploring the mechanisms behind the enhanced damping for the complete system \eqref{linearized MHD U_1, H_1}, we found a new dynamical phenomenon apart from velocity mixing: the depletion of horizontal velocity and magnetic field $( {U}_1, {H}_1)$ at the critical points of $u\pm b$. This leads to our second main result, which states as follows:

\begin{theorem}[Horizontal depletion]\label{main thm3}
Let $u, b$ satisfy the same assumptions as in Theorem \ref{main thm2}. 
Let $y_0$ be a critical point of $(u+ b)$ or $(u-b)$, i.e., $u'(y_0)=b'(y_0)$ or $u'(y_0)=-b'(y_0)$. Let $\left(\widehat {U}_1, \widehat{H}_1 \right)$ correspond to the solution to \eqref{Fourier equations2} with initial data $\left(\widehat{ \psi}_0,\widehat{ \phi}_0 \right)\in (H^3\times H^3)$. Then it holds that
$$\lim\limits_{t\to \infty}\left|\left(\widehat {U}_1,\, \widehat{H}_1\right)(t,\al,y_0)\right|=0. $$
\end{theorem}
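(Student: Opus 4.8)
The plan is to analyze the Els\"asser variables $\mathcal Z_1^{\pm}:=U_1\pm H_1$. By \eqref{linearized MHD U_1, H_1} each $\mathcal Z_1^{\pm}$ is transported by $u\mp b$ and forced by $L_1\pm L_2$, and since $\widehat U_1=\frac{1}{2}(\widehat{\mathcal Z}_1^{+}+\widehat{\mathcal Z}_1^{-})$, $\widehat H_1=\frac{1}{2}(\widehat{\mathcal Z}_1^{+}-\widehat{\mathcal Z}_1^{-})$, it suffices to prove $\widehat{\mathcal Z}_1^{+}(t,\al,y_0)\to0$ and $\widehat{\mathcal Z}_1^{-}(t,\al,y_0)\to0$ separately. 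By relabelling we may assume $y_0$ is a non-degenerate critical point of $u-b$, i.e. $u'(y_0)=b'(y_0)$ and $(u-b)''(y_0)\neq0$; the case $u'(y_0)=-b'(y_0)$ follows by exchanging $+\leftrightarrow-$ and $u-b\leftrightarrow u+b$. Note that the Stern condition $b>|u|$ forces $u-b<0<u+b$ on all of $\bbT$, so the transport speeds never vanish and the only degeneracy at $y_0$ is that $(u-b)'(y_0)=0$.

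The input from Theorem \ref{main thm2}: the space-time bound \eqref{estsptm} gives $(\widehat U_2,\widehat H_2)=-i\al(\widehat\psi,\widehat\phi)\in H^1_tL^2_y$ together with $\|(\widehat U_2,\widehat H_2)(t)\|_{L^2_y}\to0$. The nonlocal pieces $\partial_x\Delta^{-1}(b'\partial_xH_2-u'\partial_xU_2)$ occurring in $L_1\pm L_2$ contain the factor $\Delta_{\al}^{-1}$, which maps $L^2_y$ into $H^2_y$; since evaluation at a point is a bounded functional on $H^2_y$, these terms evaluated at $y_0$ are bounded by $C_{\al}\|(\widehat U_2,\widehat H_2)(t)\|_{L^2_y}$, hence tend to $0$, and (again using \eqref{estsptm}) they and their $t$-derivatives lie in $L^2_t$.

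The ``$-$'' mode (transport non-degenerate at $y_0$): here $\widehat{\mathcal Z}_1^{-}$ solves $\partial_t\widehat{\mathcal Z}_1^{-}+i\al(u+b)\widehat{\mathcal Z}_1^{-}=\widehat{L_1-L_2}$, and the local part of the forcing, $(b'-u')(\widehat H_2+\widehat U_2)$, \emph{vanishes} at $y_0$ because $u'(y_0)=b'(y_0)$; thus $\widehat{(L_1-L_2)}(t,\al,y_0)$ equals the smoothed nonlocal term controlled above. Writing the equation at $y_0$ as a scalar ODE $g'(t)+i\al(u+b)(y_0)g(t)=h(t)$ with $g=\widehat{\mathcal Z}_1^{-}(t,\al,y_0)$, $h(t)\to0$, $h\in H^1_t$, and the frequency $\al(u+b)(y_0)\neq0$, I would show that $\widehat{\mathcal Z}_1^{-}(\cdot,\al,y_0)$ carries no resonant (atomic) component at the frequency $\al(u+b)(y_0)$ — a consequence of the space-time bound together with the vanishing of the forcing at $y_0$ — and then integrate by parts in $t$ in the Duhamel formula to upgrade this to $\widehat{\mathcal Z}_1^{-}(t,\al,y_0)\to0$.

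The ``$+$'' mode (transport degenerate at $y_0$) is the crux. Now $\widehat{\mathcal Z}_1^{+}=\partial_y\widehat\Psi^{+}$ is transported by $u-b$, whose derivative vanishes at $y_0$, so phase mixing degenerates there; moreover the local forcing $(u'+b')(\widehat H_2-\widehat U_2)$ does \emph{not} vanish at $y_0$ in general, so decay cannot be read off from the transport and forcing alone. Here $\widehat{\mathcal Z}_1^{+}(t,\al,y_0)\to0$ is a genuine depletion, analogous to the vorticity depletion at critical points for the linearized Euler equations \cite{WZZ1,jia2020linear}. Concretely I would pass to the profile $\mathcal W^{+}(t,\al,y)=e^{i\al(u-b)(y)t}\widehat{\mathcal Z}_1^{+}(t,\al,y)$, rewrite the equation for $\widehat\Psi^{+}:=\widehat\psi+\widehat\phi$ (obtained by adding the two equations in \eqref{psi phi}) as a Rayleigh-type equation with transport speed $u-b$ and with an inhomogeneity controlled in $L^2_t$ by \eqref{estsptm}, change variable to $z=(u-b)(y)$ on each side of $y_0$ and resolve the fold using $(u-b)''(y_0)\neq0$, and show that the limiting profile $\mathcal W^{+}_{\infty}$ is continuous near $y_0$ with $\mathcal W^{+}_{\infty}(y_0)=0$. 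This critical-layer analysis is the main obstacle. The remaining points are routine: propagating $H^3_y$ regularity (with at most polynomial-in-$t$ growth, which is harmless) makes $\widehat{\mathcal Z}_1^{\pm}(t,\al,\cdot)$ continuous so that evaluation at $y_0$ is meaningful, and assembling the two modes yields $\lim_{t\to\infty}|(\widehat U_1,\widehat H_1)(t,\al,y_0)|=0$.
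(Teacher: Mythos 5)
Your proposal does not follow the paper's route and, as written, has two genuine gaps. The paper proves Theorem \ref{main thm3} entirely on the spectral side: it writes $(\widehat U_1,\widehat H_1)(t,\al,y_0)=\partial_y(\widehat\psi,\widehat\phi)(t,\al,y_0)$ via the Dunford integral \eqref{rslv}, uses the pointwise resolvent bounds of Lemma \ref{lem: i}, namely $|\Phi(\al,y_0,c\pm i\ep)|\lesssim|(Z_+(y_0)-c_\ep)(Z_-(y_0)-c_\ep)|^{-1/4}$ and $|\partial_y\Phi(\al,y_0,c\pm i\ep)|\lesssim|(Z_+(y_0)-c_\ep)(Z_-(y_0)-c_\ep)|^{-3/4}$, to extract boundary values $\Lambda^\pm,\Theta^\pm$ that are integrable in $c$, and then concludes by the Riemann--Lebesgue lemma. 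The whole content of the depletion is the $\tfrac14$-improvement from the naive exponent $-1$ (non-integrable, which would leave a persistent oscillatory term as in \eqref{eq:infinite}) to $-3/4$ (integrable). Your sketch contains no substitute for this quantitative input: for the ``$+$'' mode you explicitly defer the critical-layer analysis as ``the main obstacle,'' which is precisely the theorem itself, so nothing is actually proved there.

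The ``$-$'' mode argument also fails as stated. From $g'(t)+i\al(u+b)(y_0)g(t)=h(t)$ with $h\to0$, the solution still carries the non-decaying free part $e^{-i\al(u+b)(y_0)t}g(0)$; vanishing of the local forcing at $y_0$ and decay of the nonlocal piece say nothing about this term. Indeed, for the toy model \eqref{eq:toy model} the forcing is identically zero and yet $\widehat{\mathcal Z}_1^-(t,\al,y_0)$ does not decay, cf.\ \eqref{eq:initial} and the remark following Theorem \ref{main thm3}. Ruling out the ``resonant/atomic component'' is therefore exactly as hard as the theorem for that mode, and it cannot be ``a consequence of the space-time bound'': estimate \eqref{estsptm} controls $(\widehat U_2,\widehat H_2)$ in $L^2_y$, equivalently $(\widehat U_1,\widehat H_1)$ only in $H^{-1}_y$, which carries no information about the pointwise value at $y_0$. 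Both modes are handled in the paper simultaneously and only through the integrability in $c$ of the jump of $\partial_y\Phi(\al,y_0,\cdot)$ across the spectrum; any correct proof must supply that (or an equivalent) ingredient.
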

\begin{remark} 
From \eqref{eq:initial}, we can see that the horizontal depletion in Theorem \ref{main thm3} is not true for the toy model \eqref{eq:toy model}.

For a more precise description of the long time behavior of the horizontal components, we conjecture that there exist some final states ${\mathcal Z}_{1,\infty}^\pm$ and some $\kappa >0$ such that
\ben\label{eq:infinite}
\begin{split}
\left(\wh{U}_1+\wh{H}_1 \right)(t,\al,y)\sim \left(\wh{ \mathcal Z}^+_{1,\infty} \right)(\al,y)e^{-i\al (u-b)t}+\mathcal O(t^{-\kappa}),\\
\left(\wh{U}_1-\wh{H}_1 \right)(t,\al,y)\sim \left(\wh{\mathcal Z}^-_{1, \infty} \right)(\al,y)e^{-i\al (u+b)t}+\mathcal O(t^{-\kappa}).
\end{split}
\een
Theorem \ref{main thm3} reveals that $\wh{\mathcal Z}_{1,\infty}^\pm$ vanish at all the critical points of $(u\pm b).$ Our conjecture \eqref{eq:infinite} shall imply the space-time estimate \eqref{estsptm} and even a precise decay rate, provided that suitable regularity of the final states could be proven.
\end{remark}

The parallel phenomenon exists in the realm of hydrodynamics. Vorticity depletion for the linearized 2D Euler equations around shear flows, first predicted by Bouchet and Morita in \cite{bouchet2010large}, was later mathematically proven in \cite{WZZ1} by Wei, Zhang and Zhao. A similar vorticity depletion result for the 2D Euler equation linearized around a radially symmetric and strictly decreasing vorticity distribution is due to Bedrossian, Coti-Zelati and Vicol \cite{BCV2017}. {\bf As far as we know, this is the first paper studying the depletion of the horizontal velocity and magnetic field for the linearized MHD equations.}

Comparing the toy model \eqref{eq:toy model} with the complete system \eqref{linearized MHD U_1, H_1}, we observe at least three significant effects of the nonlocal terms $L_1$ and $L_2$:
\begin{enumerate}
\item Altering the final state, as $\wh{\mathcal Z}^\pm_{1, in}$ on the right hand side of \eqref{eq:initial} are changed into some other final state $\wh{\mathcal Z}^\pm_{1, \infty}$ on the right hand side of \eqref{eq:infinite}; 
\item Causing $\wh{\mathcal Z}^\pm_{1, \infty}$ to vanish at the critical points of $(u \pm b)$; 
\item Enhancing the damping.
\end{enumerate}This demonstrates that the nonlocal terms $L_1$ and $L_2$ in \eqref{linearized MHD U_1, H_1} cannot simply be neglected or be regarded as mere perturbations on the toy model.

\begin{remark}
The significance of the nonlocal terms can also be seen from the resolvent estimate. Indeed, if we consider the toy model \eqref{eq:toy model} instead, the Sturmian equation \eqref{eq:ODE1} in Section 3 would become $\left((u-c)^2-b^2 \right)\pa_y\Phi=F,$ which is much simpler and yields an obvious estimate 
$$|\pa_y\Phi|\lesssim {|(u-c)^2-b^2|}^{-1}.$$ 
Yet, as we shall see in Section 3, at any critical point $y_0$ of $(u+b)$ or $(u-b),$ the solution $\Phi$ to the actual Sturmian equation \eqref{eq:ODE1} enjoys a non-trivial estimate
$$|\pa_y\Phi(y_0)|\lesssim |(u(y_0)-c)^2-b(y_0)^2|^{-\f34}.$$
This $\frac{1}{4}$-improvement, resulting from the effects of $L_1$ and $L_2,$ is the key to the depletion result. For more details, we refer to Section 3 and 4.
\end{remark}

We have the following comments on the results in Theorem \ref{main thm2} and Theorem \ref{main thm3}.

\begin{remark}
The vertical damping and horizontal depletion results also hold for the case of finite channel under slip boundary condition, provided that the critical points do not appear at the boundary. 
\end{remark}

\begin{remark}
To highlight the differences among the long time behaviors of the solutions to the linearized MHD equations in various cases, we show the following table:
\begin{table}[H]
\centering
\resizebox{\textwidth}{!}{
\begin{tabular}{|c|c|c|c|c|c|}
\hline  
\multicolumn{4}{|c|}{Conditions}&\multirow{2}*{Results}&\multirow{2}*{References}\\
\cline{1-4}
Monotonicity&\makecell{Uniform direction \\$b>0$} &\makecell{Stern stability\\ $|u|\leq|b|$}  &Other  conditions&{}&\\
\hline 
Yes&Yes&Yes&$u\equiv 0$&Damping&\cite{RZ2017}\\
\hline
Yes&No&Yes&$u(0)=b(0)=0$&Magnetic  Island&\cite{ZZZ2018}\\
\hline
Yes&No&No&\makecell{$u=k_1y,\, b=k_2y$\\ $k_1>k_2\geq 0$}&Damping&\cite{RWZ2020}\\
\hline
No&Yes&\makecell{Yes\\$|u|<b$}&\makecell[l]{Non-degenerate\\ critical points} &\makecell{Damping \\$\&$ Depletion}& This paper\\
\hline
\end{tabular}
}
\end{table}
Comparing the results listed above, we have the following observations:
\begin{enumerate}
\item The first result of this paper, in comparison with those in \cite{RZ2017} and \cite{ZZZ2018}, indicates that the unidirectionality of the sheared magnetic field, rather than the monotonicity, is responsible for the damping. 
\item The stability condition $|u| <b$ in this paper seems to indicate magnetism-driven mechanisms, whilst the damping in \cite{RWZ2020} might be seen as fluid-driven.
\end{enumerate}
\end{remark}

\subsection{Outline of the paper}
The organization of the paper is as follows. In Section 2, we introduce the representation formula, which is a contour integral of the resolvent. The study of the resolvent is then reduced to that of the Sturmian equation. In Section 3, we study the Sturmian equation and establish a uniform estimate as well as the limiting absorption principle. This is the most technical part of the paper as our assumptions include rather general sheared velocity and magnetic field, which in turn results in a range of situations that need to be discussed separately from each other. A novelty here is the use of ODE techniques in dealing with the situation involving the critical points. In Section 4, we prove the main theorems by the resolvent estimate.

\section{The Dunford integral and the Sturmian equation}
The basic idea for the study of the long-time behavior of the solution to \eqref{Fourier equations2} is to acquire a precise formula of $(\psi,\phi),$ which requires understanding the spectral properties of the linearized operator $M_{\al}$. Indeed, it is easy to check that the spectrum $\s(M_{\al})=\mathrm{Ran}\, (u+b)\cup \mathrm{Ran}\, (u-b)$. Then we have the Dunford integral
\begin{equation}\label{eq: psi and phi-1}
\begin{split}
\Big(
  \begin{array}{ccc}
    \widehat{\psi}\\
    \widehat{\phi}\\
  \end{array}
\Big)(t,\alpha,y)
&=\frac{1}{2\pi i}\int_{\partial\Omega}e^{-i\alpha tc}(cI-M_{\al})^{-1}\Big(
  \begin{array}{ccc}
    \widehat{\psi}\\
    \widehat{\phi}\\
  \end{array}
\Big)(0,\alpha,y)dc\\
&=\lim_{\ep\to 0+}\frac{1}{2\pi i}\int_{\partial\Omega_{\ep}}e^{-i\alpha tc}(cI-M_{\al})^{-1}\Big(
  \begin{array}{ccc}
    \widehat{\psi}\\
    \widehat{\phi}\\
  \end{array}
\Big)(0,\alpha,y)dc,
\end{split}
\end{equation}
where $\Om$ contains the spectrum $\s(M_{\al})$ and $\Om_{\ep}$ is the $\ep$-neighborhood of $\s(M_{\al})$. Here the last equality holds due to the fact that for $c\notin \s(M_{\al})$, the resolvent $(cI-M_{\al})^{-1}\Big(
  \begin{array}{ccc}
    \widehat{\psi}\\
    \widehat{\phi}\\
  \end{array}
\Big)(0,\alpha,y)$ is analytic in $c$. 
With the representation formula \eqref{eq: psi and phi-1}, we have reduced our problem to the study of the resolvent $(cI-M_{\al})^{-1}$ and the limit of the contour integral.

Assume that 
$$\big(cI-M_{\al}\big)^{-1}\Big(\begin{array}{l} \widehat{\psi}_0\\ \widehat{\phi}_0\end{array}\Big)(\al,y)=\Big(\begin{array}{l} \Psi_1\\ \Phi_1\end{array}\Big)(\al,y,c).$$ 
Then a direct calculation shows that $(\Psi_1,\Phi_1)$ solves the following system, for $c\notin \s(M_{\al})$:
\begin{align*}
\left\{
\begin{array}{l}
(u-c)\Delta_{\al}\Psi_1-u''\Psi_1-b\Delta_{\al}\Phi_1
+b''\Phi_1
=\Delta_{\al}\widehat{\psi}_0,\\
(u-c)\Phi_1-b\Psi_1=-\widehat{\phi}_0.
\end{array}
\right.
\end{align*}
Here $\Delta_{\al}=\partial_y^2-\al^2$. Let $\Phi_1(\al,y,c)=b(y)\Phi(\al,y,c)$, and then 
\beno
\Psi_1(\al,y,c)=(u(y)-c)\Phi(\al,y,c)+\widehat{\phi}_0(\al,y)/b(y).
\eeno
 Thus, we obtain
\beq\label{eq:ODE1}
\begin{split}
&\pa_y\left(\left(\left(u(y)-c\right)^2-b(y)^2\right)\pa_y\Phi(\al, y,c)\right)
-\al^2\left(\left(u(y)-c\right)^2-b(y)^2\right)\Phi(\al,y,c)\\
&
=\Delta_{\al}\widehat{\psi}_0(\al,y)-\big(u(y)-c\big)\Delta_{\al}\left(\frac{\widehat{\phi}_0(\al,y)}{b(y)}\right)
+u''(y)\frac{\widehat{\phi}_0(\al,y)}{b(y)} := F(\al,y,c).
\end{split}
\eeq
This is the so-called Sturmian type equation.

\section{Uniform estimates and limiting absorption principle}\label{sec: lap}

Let $\Omega_{\ep_0}$ denote the $\ep_0$-neighborhood of $\text{Ran}\,(u+b) \, \cup \,\text{Ran}\, (u-b)$ in $\mathbb C.$ We shall study Equation \eqref{eq:ODE1} with $c \in \Omega_{\ep_0}.$ We introduce the Els\"asser variables $Z_{\pm}:=u\pm b.$ Hence, we can rewrite the equation of $\Phi(\al, y, c)$ as
\beq\label{eq:odeihom}
\partial_y((Z_--c)(Z_+-c)\partial_y \Phi) - \alpha^2 (Z_--c)(Z_+-c)\Phi =F.
\eeq

This section is devoted to the proofs of the following propositions, which are crucial to our main results.

\begin{proposition}\label{prop: lap} There exists $\ep_0 >0$ such that for $c \in \left( \Omega_{\ep_0} \setminus \left(\mathrm{Ran}\, Z_+ \cup \mathrm{Ran}\, Z_-\right) \right),$ the solution to \eqref{eq:odeihom} satisfies the following bound, uniform with respect to $c$
$$ \|\Phi (\al,\cdot, c)\|_{L^2} + \| (Z_--c)(Z_+-c)\partial_y \Phi (\al,\cdot, c) \|_{H^1} \leq C \|F(\al,\cdot, c)\|_{H^1}.$$
\end{proposition}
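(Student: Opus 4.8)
The plan is to treat \eqref{eq:odeihom} as a perturbed Sturm--Liouville problem with complex, sign-indefinite coefficient $\rho(y,c):=(Z_-(y)-c)(Z_+(y)-c)$ and to establish the estimate by an energy method combined with a careful analysis near the "turning points" where $\rho$ is small. First I would set up the weak formulation: multiply \eqref{eq:odeihom} by $\overline{\Phi}$, integrate in $y\in\mathbb T$, and integrate by parts to get
\[
\int_{\mathbb T}\rho(y,c)\bigl(|\partial_y\Phi|^2+\alpha^2|\Phi|^2\bigr)\,dy=-\int_{\mathbb T}F\,\overline{\Phi}\,dy.
\]
Taking real and imaginary parts separately exploits the structure $\rho = (\operatorname{Re}\rho) + i(\operatorname{Im}\rho)$ with $\operatorname{Im}\rho = -(\operatorname{Im} c)(Z_-+Z_+-2\operatorname{Re} c)$; because $b>|u|$, the Stern condition guarantees $Z_+ > Z_-$ everywhere, so the zeros of $Z_+-c$ and of $Z_--c$ (as $c$ ranges over $\Omega_{\ep_0}\setminus(\operatorname{Ran}Z_+\cup\operatorname{Ran}Z_-)$) are separated, and near each one $|\rho|$ is comparable to the distance to the corresponding range. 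This is what makes the two branches decouple and what ultimately gives control of $\|\Phi\|_{L^2}$ once we handle the turning-point regions.

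The core difficulty — and the step I expect to be the main obstacle — is that $\rho$ vanishes (or nearly vanishes) on parts of $\mathbb T$ when $\operatorname{Re} c\in\operatorname{Ran}Z_\pm$ and $\operatorname{Im} c\to 0$, so the quadratic form above is degenerate and the naive bound $\|\Phi\|_{L^2}\lesssim \|\rho\,\partial_y\Phi\|\cdot(\ldots)$ loses uniformity. The resolution uses the hypothesis that the critical points of $u\pm b$ are non-degenerate: near a critical point $y_c$ of $Z_\pm$ one has $Z_\pm(y)-\operatorname{Re}c \sim (y-y_c)^2 \pm \delta$ for the relevant offset $\delta$, so $\rho$ behaves like a quadratic (times a nonvanishing factor from the other branch), and one can invoke the ODE/Airy-type analysis the paper promises in Section~3 to show that the solution $\Phi$ and the flux $\rho\,\partial_y\Phi$ remain bounded through the turning point with the stated loss being absorbed by the $H^1$ norm of $F$ on the right. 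Concretely I would localize with a partition of unity: away from all turning points, $|\rho|\gtrsim \ep_0$ and the estimate is elementary elliptic regularity for $-\partial_y(\rho\,\partial_y\Phi)+\alpha^2\rho\Phi=F$; in a fixed neighborhood of each turning point, rescale by the natural length scale and reduce to a model constant-coefficient (or Airy) problem whose Green's function yields the bound.

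Having obtained $\|\Phi\|_{L^2}\lesssim\|F\|_{H^1}$, the bound on $\|\rho\,\partial_y\Phi\|_{H^1}$ follows by rewriting the equation as $\partial_y(\rho\,\partial_y\Phi)=\alpha^2\rho\Phi - F$: the right side is controlled in $L^2$ (indeed in $H^1$ after using $\rho\in C^1$, $\Phi\in H^1$ — which we recover from the flux bound by a bootstrap, since $\partial_y\Phi=(\rho\,\partial_y\Phi)/\rho$ is integrable away from turning points and the turning-point contributions are handled by the local analysis), so $\rho\,\partial_y\Phi$ gains one derivative. One subtlety to check carefully is that all constants are genuinely independent of $c$ in the punctured neighborhood: this is where the compactness of $\mathbb T$, the $C^3$ regularity of $u,b$, and the finiteness and non-degeneracy of the critical set enter, ensuring a uniform lower bound on $|\rho|$ outside shrinking turning-point neighborhoods and a uniform model estimate inside them. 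Finally, I would note that the limit $\operatorname{Im}c\to 0^\pm$ of $\Phi$ exists in the stated norms (the limiting absorption principle), which follows from the uniform bound together with an equicontinuity argument in $c$ on the energy identity — this is the companion statement the section is also meant to prove and uses the same machinery.
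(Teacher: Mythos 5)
Your plan is a direct, quantitative one (energy identity plus localization and model problems near the degeneracies), whereas the paper argues by contradiction and compactness: it assumes the bound fails along a sequence $c_n$, extracts weak limits, shows the limit solves the homogeneous equation and vanishes (this is where the Stern condition enters), and then upgrades to strong convergence to contradict the normalization. A direct route is not impossible in principle, but as written your argument has a concrete gap at its very first step. From the energy identity $\int_{\mathbb T}\rho\,(|\partial_y\Phi|^2+\alpha^2|\Phi|^2)\,dy=-\int_{\mathbb T}F\overline{\Phi}\,dy$ with $\rho=(u-c)^2-b^2$, one has $\mathrm{Re}\,\rho=(u-\mathrm{Re}\,c)^2-(\mathrm{Im}\,c)^2-b^2$ and $\mathrm{Im}\,\rho=-2\,\mathrm{Im}\,c\,(u-\mathrm{Re}\,c)$. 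The real part alone is sign-indefinite when $\mathrm{Re}\,c\in\mathrm{Ran}\,Z_\pm$; to produce the sign-definite coefficient $u^2-b^2-|c|^2$ one must add $2\,\mathrm{Re}\,c$ times the imaginary-part identity \emph{after dividing it by} $-2\,\mathrm{Im}\,c$. For the homogeneous equation this costs nothing (the right side is zero), which is exactly how the paper kills the weak limit; but for $F\neq0$ it produces a term of size $|\mathrm{Im}\,c|^{-1}\|F\|_{L^2}\|\Phi\|_{L^2}$, which blows up precisely in the regime the proposition is about. So the energy method does not "ultimately give control of $\|\Phi\|_{L^2}$" — this loss of a factor $|\mathrm{Im}\,c|^{-1}$ is the whole difficulty, and your sketch never explains how it is recovered.

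The second gap is in the local analysis. You treat only the non-degenerate critical points as "turning points" and propose an Airy-type rescaling, but the generic degeneracy occurs at simple crossings $y_c$ with $Z_s(y_c)=\mathrm{Re}\,c$, $Z_s'(y_c)\neq0$, where $\rho\sim(y-y_c)-i\,\mathrm{Im}\,c/Z_s'(y_c)$; handling these requires the principal-value/residue structure (the paper's identity forcing $q(y_c)=q'(y_c)=0$ for the flux $q=\rho\,\partial_y\Phi$, followed by Hardy's inequality), not elliptic regularity. At the critical points the degeneracy is quadratic, $\rho\sim\bigl((y-y_0)^2-\sigma(c)^2\bigr)\times(\text{nonvanishing})$, so the correct model is not Airy; the paper must construct the homogeneous solutions $\varphi^{\ell,r}$ explicitly, compute the connection coefficients $\mu,\nu$ through the quantities $I^{\ell,r}(c)$ and $\mathcal D(c)$, and prove the delicate bounds $|I_2^k(\sigma)|\lesssim|\sigma|^{1/4}$ and $|\mathcal D(c)|\sim|\sigma(c)|^{-1}$ to conclude $\|\Phi_*\|_{L^2}\lesssim\|F_*\|_{L^\infty}$. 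This is also where the $H^1$ (rather than $L^2$) norm of $F$ on the right-hand side is genuinely needed, a point your proposal does not account for. Without these two ingredients the argument does not close.
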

The estimate on $\Phi$ can be continued (in $c$) up to the boundary $\mathrm{Ran}\, Z_+ \cup \mathrm{Ran}\, Z_-.$ 
\begin{proposition}\label{prop: conti}
For $c \in \left(\mathrm{Ran}\, Z_+ \cup \mathrm{Ran}\, Z_-\right),$ there exist $\Phi^\pm(\al,\cdot,c) \in L^2$ such that as $\ep \to 0^+,$ $\Phi(\al,\cdot,c\pm i \ep) \to \Phi^\pm(\al,\cdot,c)$ in $L^r $ with $r \in (1,2)$ and 
$$ \left\|\Phi^\pm(\al,\cdot, c) \right\|_{L^2}  \leq C\|F(\al,\cdot, c)\|_{H^1}.$$
\end{proposition}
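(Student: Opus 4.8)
The plan is to derive Proposition~\ref{prop: conti} from the uniform bound of Proposition~\ref{prop: lap} by a limiting-absorption argument, so I need (i) a uniform-in-$\ep$ $L^2$ bound for $\Phi(\al,\cdot,c\pm i\ep)$, which Proposition~\ref{prop: lap} already gives for $c\pm i\ep\notin\mathrm{Ran}\,Z_+\cup\mathrm{Ran}\,Z_-$, and (ii) a compactness/Cauchy argument producing an actual limit in $L^r$, $r\in(1,2)$. The weak $L^2$ bound alone gives, along subsequences, a weak-$L^2$ limit $\Phi^\pm$ obeying the stated bound by weak lower semicontinuity of the norm; the content is upgrading this to strong convergence in $L^r$ and to a genuine (subsequence-free) limit.

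First I would fix $c_0\in\mathrm{Ran}\,Z_+\cup\mathrm{Ran}\,Z_-$ and write $\Phi_\ep:=\Phi(\al,\cdot,c_0+i\ep)$. From Proposition~\ref{prop: lap}, $\|\Phi_\ep\|_{L^2}\le C\|F(\al,\cdot,c_0+i\ep)\|_{H^1}$; since $F$ depends on $c$ only through the explicit affine factor $(u(y)-c)$ (see \eqref{eq:ODE1}), $\|F(\al,\cdot,c_0+i\ep)\|_{H^1}$ is bounded uniformly in small $\ep$ and converges to $\|F(\al,\cdot,c_0)\|_{H^1}$. Thus $\{\Phi_\ep\}$ is bounded in $L^2$, hence (Banach--Alaoglu) has weak-$L^2$ limit points. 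To identify the limit and get strong convergence, I would use the equation \eqref{eq:odeihom}: from the uniform $H^1$ bound on $(Z_--c)(Z_+-c)\partial_y\Phi_\ep$ one controls $\partial_y\Phi_\ep$ away from the (finitely many, by the non-degenerate critical point hypothesis) zeros of $(Z_\pm-c_0)$, giving local $H^1$ compactness on compact subsets of $\bbT\setminus(Z_+^{-1}\{c_0\}\cup Z_-^{-1}\{c_0\})$ via Rellich; combined with the global $L^2$ bound and the smallness of neighborhoods of the bad points (whose measure $\to 0$), one upgrades $L^2_{loc}$ strong convergence to strong convergence in $L^r(\bbT)$ for every $r<2$, by splitting $\int|\Phi_\ep-\Phi_{\ep'}|^r$ into the good region (where it $\to0$) and the small bad region (where Hölder against the uniform $L^2$ bound gives $\lesssim |\text{bad region}|^{1-r/2}$, made small first). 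Uniqueness of the $L^r$ limit then follows because any two subsequential limits agree on the good region and hence a.e.; this also pins down $\Phi^\pm$ independently of the subsequence.

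It remains to verify $\Phi^\pm\in L^2$ with the claimed bound: since $\Phi_\ep\to\Phi^\pm$ strongly in $L^r$ and is bounded in $L^2$, Fatou/weak lower semicontinuity gives $\|\Phi^\pm\|_{L^2}\le\liminf\|\Phi_\ep\|_{L^2}\le C\|F(\al,\cdot,c_0)\|_{H^1}$. One should also confirm that the limit $\Phi^\pm$ solves the limiting Sturmian equation with $c=c_0\pm i0$ in the distributional sense, by passing to the limit in \eqref{eq:odeihom}; the product $(Z_--c)(Z_+-c)\partial_y\Phi_\ep$ converges weakly because its $H^1$ bound is uniform and the coefficients converge uniformly.

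The main obstacle is the behavior near the zeros of $(Z_+-c_0)$ and $(Z_--c_0)$, i.e. exactly the critical points of $u\pm b$ where the leading coefficient of the Sturmian operator degenerates. There the bound on $\partial_y\Phi_\ep$ degenerates and one cannot hope for $H^1$ compactness; everything hinges on these bad sets having controlled (small) measure and being finite in number, which is where the non-degeneracy of the critical points and the $C^3$ regularity of $u,b$ enter, and on the refined pointwise control of $\partial_y\Phi$ near such points (the $\tfrac14$-improvement alluded to in the introduction). Making the strong-$L^r$ convergence argument quantitative near those points, rather than merely qualitative, is the delicate step; I expect it to reuse the ODE analysis of Section~3 that underlies Proposition~\ref{prop: lap} itself.
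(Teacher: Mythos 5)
Your reduction of the $L^r$ convergence to (a) the uniform $L^2$ bound from Proposition \ref{prop: lap}, (b) H\"older's inequality on a small neighborhood of the finitely many zeros of $(Z_\pm-c)$, and (c) strong local compactness on the complement is sound as far as it goes, and the H\"older-on-the-bad-set step is a reasonable shortcut around the Kolmogorov--Riesz compactness lemma the paper invokes from Appendix \ref{rmk1}. But there is a genuine gap at the step ``uniqueness of the $L^r$ limit then follows because any two subsequential limits agree on the good region.'' Rellich compactness on the good region only gives \emph{subsequential} strong convergence there; two different subsequences $\ep_n\to 0^+$ and $\ep_n'\to 0^+$ could a priori converge to two different functions $\Phi_{c,1}$ and $\Phi_{c,2}$. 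Both limits solve the same second-order Sturmian ODE on each component of the good region, but that equation has a two-dimensional space of homogeneous solutions on each such interval, so solving the same inhomogeneous ODE there does not force $\Phi_{c,1}=\Phi_{c,2}$: the difference is pinned down only by the boundary behavior at the degenerate set $(Z_+)^{-1}(c)\cup(Z_-)^{-1}(c)$, which is exactly the information your splitting discards. Without this, you have shown precompactness of $\{\Phi(\al,\cdot,c\pm i\ep)\}$ in $L^r$, not convergence.

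The paper closes precisely this gap, and it is essentially the entire content of its proof: assuming two sequences with $\left\|\Phi(\cdot,c_{n,1})-\Phi(\cdot,c_{n,2})\right\|_{L^r}\ge\delta$, it shows that the difference of the weak limits satisfies the homogeneous equation against test functions in the weighted space $H^1_{w,0}$. This requires the Plemelj-type identity of Lemma \ref{lem: m}, whose imaginary part yields $q(y_c)=q'(y_c)=0$ at the monotone crossing points (hence $\partial_y\Phi\in L^2$ there by Hardy's inequality), together with the weighted bounds $(y-y_0)\partial_y\Phi\in L^2$ of Lemmas \ref{lem: i} and \ref{lem: r} at the critical points; one then tests with $\mathrm{sgn}\left((Z_+-c)(Z_--c)\right)(\Phi_{c,1}-\Phi_{c,2})$ and uses the sign-definiteness of $(Z_--c)$, i.e.\ the Stern condition $|u|<b$, to conclude $\Phi_{c,1}\equiv\Phi_{c,2}$. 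You allude to the refined ODE analysis near the critical points as ``the delicate step,'' but the truly indispensable ingredient is this uniqueness argument for the limiting boundary-value problem, which your outline does not supply.
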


We shall prove Proposition \ref{prop: lap} by contradiction. Suppose that the proposition is false,  then there exists a sequence $\{ c_n, \Phi_n,  F_n\}_{n=1}^\infty$ satisfying the equation
\begin{equation}\label{eq:eln00}
\partial_y ((Z_--c_n)(Z_+-c_n) \partial_y \Phi_n ) -\alpha^2 (Z_--c_n)(Z_+-c_n) \Phi_n = F_n,
\end{equation}
such that 
\begin{itemize}
\item $c_n \in \left( \Omega_{\ep_0} \setminus \big(\mathrm{Ran}\, Z_+\cup \mathrm{Ran}\, Z_-\big) \right),$
\item $\| \Phi_n\|_{L^2}+\|(Z_--c_n)(Z_+-c_n)\partial_y \Phi_n\|_{H^1} =1$,
\item $F_n \to 0$ in $H^1$ as $n \to \infty.$
\end{itemize}
For convenience, we shall use the notation $q_n:= (Z_--c_n)(Z_+-c_n)\pa_y \Phi_n$ from time to time in this section.

As $| c_n | \leq C$ and $\| \Phi_n\|_{L^2}+\|q_n\|_{H^1} =1,$ we know that 
\begin{itemize}
\item $c_n \to c$ for some $c \in \overline{\Omega}_{\ep_0}$ up to a subsequence,
\item $\Phi_n \rightharpoonup \Phi$ in $L^2$ up to a subsequence,
\item $q_n \rightharpoonup q$ in $H^1$ up to a subsequence, for some $q \in H^1.$
\end{itemize}
(For convenience, we shall simply use the original $c_n$ and $\Phi_n$ to denote the elements in the subsequence.) Moreover, $q = (Z_--c)(Z_+-c) \pa_y \Phi,$ as seen from the identity
\begin{align*}
\langle q_n, f\rangle_{L^2} = & \langle (Z_--c_n)(Z_+-c_n) \pa_y \Phi_n, f \rangle_{L^2}\\
= & - \langle \Phi_n, Z'_-(Z_+-c_n) f+(Z_--c_n)Z'_+ f +(Z_--c_n)(Z_+-c_n) f' \rangle_{L^2}, \ \forall\,  f\in C^\infty_0.
\end{align*}


We shall show in the following passages that 
\begin{itemize}
\item the weak limit $\Phi \equiv 0,$ 
\item in fact, strong convergences hold true, i.e., $\Phi_n \rightarrow 0$ in $L^2$ and $q_n \rightarrow 0$ in $H^1,$ 
\end{itemize}
which contradict the very assumption that $\| \Phi_n\|_{L^2} + \|(Z_--c_n)(Z_+-c_n)  \pa_y \Phi_n \|_{H^1} =1.$ 

Formally, by performing integration by parts on the limiting equation 
\begin{equation}\label{eq:limiting eq}
\partial_y ((Z_--c)(Z_+-c) \partial_y \Phi ) -\alpha^2 (Z_--c)(Z_+-c) \Phi =0,
\end{equation}
we can see that the weak limit $\Phi \equiv 0.$ To prove this, we have to show that $\Phi$ belongs to a space for which the operation is allowed. 

The limit $c$ belongs to either $\left( \overline{\Omega}_{\ep_0} \setminus \left( \text{Ran}\,Z_+ \cup \text{Ran}\, Z_- \right) \right)$ or $\left( \text{Ran}\,Z_+ \cup \text{Ran}\, Z_- \right).$

Let us first consider the case when $c_n \to c=\text{Re}\, c+i \text{Im}\,c \notin\left( \mathrm{Ran}\, Z_+ \cup \mathrm{Ran}\,Z_- \right).$ In this case, it's straightforward to show that $\Phi_n \to \Phi$ in $L^2$ and $q_n\to q:=(Z_--c)(Z_+-c)  \pa_y \Phi$ in $H^1,$ where $\Phi$ is the classical solution to \eqref{eq:limiting eq}. Taking the inner product with $\overline{\Phi},$ integrating by parts and separating the real and imaginary parts, we obtain
\beq\label{re}
\int_{\mathbb T} \left( \left(u(y)-\mathrm{Re}\,c\right)^2 - \left( b(y)\right)^2-(\mathrm{Im}\,c) ^2\right)
\left(|\pa_y\Phi(y,c)|^2+\al^2|\Phi(y,c)|^2\right)\mathrm{d}y=0,
\eeq
and 
\ben\label{im}
\int_{\mathbb T}(u(y)-\mathrm{Re}\,c)\left(|\pa_y\Phi(y,c)|^2+\al^2|\Phi(y,c)|^2\right) \mathrm{d}y=0.
\een
Multiplying (\ref{im}) by $2\mathrm{Re}\,c$ and adding it to (\ref{re}) lead to
\beno
\int_{\mathbb T} \left( u(y)^2-b(y)^2-(\mathrm{Re}\,c)^2-(\mathrm{Im}\,c)^2\right)
\left(|\pa_y\Phi(y,c)|^2+\al^2|\Phi(y,c)|^2\right)\mathrm{d}y=0.
\eeno
The condition $|u|<b$ then guarantees that $\Phi \equiv 0.$

The difficult situation is when $ c_n \to c \in \left( \text{Ran}\, Z_+ \cup \text{Ran}\, Z_- \right)$ as $n \to \infty,$ which renders Equation \eqref{eq:eln00} degenerate at the the sets $(Z_+)^{-1}(c):= \left\{ y \in \mathbb{T} : Z_+ (y) = c\right\}$ and  $(Z_-)^{-1}(c):= \left\{ y \in \mathbb{T} : Z_- (y) = c\right\}.$  The condition $0< |u| < b$ ensures that $\text{Ran}\, Z_+\cap \text{Ran}\, Z_-=\emptyset.$ Hence, if $c_ n \to c\in \text{Ran}\, Z_+,$ then $c \notin \text{Ran}\, Z_-,$ and vice versa. In this section, we will always provide detailed proofs only for the case that $c_n \to c \in \text{Ran}\, Z_+,$ as those for the case $c_n \to c \in \text{Ran}\, Z_-$ are essentially similar.

By our assumptions on $(u\pm b),$ the sets $(Z_s)^{-1}(c),$ $s=+$ or $-,$ consist of two possible types of points:
\begin{enumerate}
\item  points located in a monotone interval, i.e., $\{y\in\mathbb{T} : Z_s(y)= c \text{ and } |Z_s'(y)| >0\},$ 
\item  critical points, where $Z_s'=0$ and $|Z_s''|>0.$ 
\end{enumerate}
In our analysis,  the two situations require separate treatments.  To better distinguish between the two,  we denote a critical point as $y_0.$ 

\subsection{Weak convergence of $\{\Phi_n\}_{n=1}^\infty$ to $0$}

In this subsection, we establish several lemmas implying that $\Phi_n \rightharpoonup \Phi \equiv 0$ as $c_n\to c$. As previously mentioned, to this end we need to prove that $\Phi$ is regular enough such that the desired integration by parts is justified. This is clear when $y \not \in (Z_{-})^{-1}(c)\cup (Z_+)^{-1}(c),$ as $\Phi\in H^3_{loc}\left(\mathbb{T}\setminus (Z_\pm)^{-1}(c) \right)$ thanks to $|(Z_--c)(Z_+-c)| > C >0.$ Therefore, our consideration starts from the case when $y_c \in (Z_{s})^{-1}(c)$ lies in a region where $Z_{s}$ is strictly monotone, $s=+$ or $-.$ 

\begin{lemma}\label{lem: m} 
Let $\{c_n\}_{n=1}^\infty \subset \left(\Omega_{\ep_0}\setminus \left(\mathrm{Ran}\,Z_+\cup \mathrm{Ran}\, Z_- \right) \right)$ be such that $c_n \to c$ as $n \to \infty$ for a certain $c \in \left(\mathrm{Ran}\, Z_+ \cup \mathrm{Ran}\,Z_- \right).$ Let the triple $\{c_n,  \Phi_n,  F_n  \}_{n=1}^\infty$ satisfy the equation \eqref{eq:eln00} along with the following conditions
\begin{itemize}
\item $\Phi_n \rightharpoonup \Phi$ in $L^2$ and $(Z_--c_n)(Z_+-c_n) \pa_y \Phi_n \rightharpoonup (Z_--c)(Z_+-c)\pa_y \Phi$ in $H^1$ in $\mathcal I,$
\item $F_n \to F$ in $H^1$ in $\mathcal I,$
\end{itemize}  
for some interval $\mathcal{I}:= [y_1,  y_2] \subset \mathbb T$ such that there exists $y_c \in (Z_s)^{-1}(c)$ in $\mathcal I$ with $|Z_s'(y_c)|>0;$ $Z_s'(y_c) Z_s'(y) >0,$ $\forall\, y \in \mathcal{I},$ where $s=+$ or $-.$

Then $\Phi_n \rightarrow \Phi$ in $L^2(\mathcal I)$ and $(Z_--c_n)(Z_+-c_n) \pa_y \Phi_n \rightarrow (Z_--c)(Z_+-c)\pa_y \Phi$ in $H^1(\mathcal I)$. 

In particular, if $F \equiv 0,$ then $\Phi\in H^1(\mathcal{I})$ and
\begin{align}\label{eq:ibpmn}
-\int_{\mathcal{I}} (Z_--c)(Z_+-c)\pa_y\Phi f'\, \mathrm{d}y+\al^2\int_{\mathcal{I}} (Z_--c)(Z_+-c)\Phi f \,\mathrm{d}y= 0 , \ \forall  f \in H^1_{w,0}(\mathcal{I}),
\end{align}
where $H^1_{w,0} =\left\{ f \in L^2 : \left((y-y_c)\pa_y f \right) \in L^2 \text{ and } f\vert_{y=y_1} =f\vert_{y=y_2} =0 \right\}.$
\end{lemma}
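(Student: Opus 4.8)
The plan is to localize near the single simple zero $y_c$ of $(Z_s - c)$ inside $\mathcal I$ and exploit the one-dimensional structure of the degenerate ODE \eqref{eq:eln00}. First I would observe that away from $y_c$, say on $\mathcal I \setminus (y_c - \delta, y_c + \delta)$, the coefficient $(Z_- - c)(Z_+ - c)$ is bounded below in absolute value uniformly in $n$ (for $n$ large), so there the equation is uniformly elliptic; standard elliptic estimates upgrade the weak convergence $\Phi_n \rightharpoonup \Phi$, $q_n \rightharpoonup q$ to strong convergence in $H^1_{loc}$ and indeed $\Phi \in H^3_{loc}$ there. The whole issue is therefore concentrated in the small interval $J_\delta := (y_c - \delta, y_c + \delta)$, where I would write $Z_s - c_n = (Z_s - c) + (c - c_n)$, use $Z_s'(y_c) \neq 0$ to get $|Z_s(y) - c| \sim |y - y_c|$, and treat the other factor $(Z_{-s} - c_n)$ as a smooth non-vanishing multiplier. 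Thus near $y_c$ the operator behaves like $\partial_y\big((y - y_c - \eta_n)(\text{smooth, bounded below})\partial_y \cdot\big)$ with $\eta_n \to 0$ a (complex) shift keeping $c_n$ off $\mathrm{Ran}\,Z_s$.

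The key step is a uniform-in-$n$ bound on $\|\Phi_n\|_{L^2(J_\delta)}$ together with some fractional control of $\partial_y\Phi_n$ near $y_c$, extracted from $\|q_n\|_{H^1} \le 1$. Since $q_n = (Z_- - c_n)(Z_+ - c_n)\partial_y\Phi_n \in H^1$ uniformly, and $q_n' = F_n + \alpha^2 (Z_- - c_n)(Z_+ - c_n)\Phi_n$ is bounded in $L^2$, I can write $\partial_y\Phi_n = q_n / \big((Z_- - c_n)(Z_+ - c_n)\big)$ and split $q_n(y) = q_n(y_c^{(n)}) + \int$, where $y_c^{(n)}$ is the point where $(Z_s - c_n)$ is "smallest." The nontrivial point, exactly as in \cite{WZZ1}, is that the piece $q_n(y_c^{(n)})/\big((Z_s - c_n)(y)\cdot(\text{smooth})\big)$ need not be in $L^2$ uniformly unless one shows $q_n(y_c^{(n)}) \to 0$; this follows because $q_n$ is bounded in $H^1 \hookrightarrow C^{1/2}$, so a diagonal/Arzel\`a–Ascoli argument gives $q_n \to q$ uniformly on $\mathcal I$, and the weak limit equation forces $q(y_c) = 0$ (otherwise $\partial_y\Phi = q/((Z_- - c)(Z_+ - c)) \notin L^1_{loc}$ would contradict $\Phi \in L^2$ combined with the distributional identity for $q$). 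Once $q_n(y_c^{(n)}) \to 0$ is known, the Hardy-type inequality $\|\,\cdot/(y - y_c)\,\|$ controlled by the $H^1$ seminorm gives $\partial_y\Phi_n \to \partial_y\Phi$ strongly in $L^2(J_\delta)$, hence $\Phi_n \to \Phi$ strongly in $L^2(\mathcal I)$ and $q_n \to q$ in $H^1(\mathcal I)$.

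For the "in particular" clause, with $F \equiv 0$ the strong convergence shows $\Phi \in H^1(\mathcal I)$ and that $(Z_- - c)(Z_+ - c)\partial_y\Phi \in H^1(\mathcal I)$ with $q(y_c) = 0$. I would then multiply the limiting equation \eqref{eq:limiting eq} by a test function $f \in H^1_{w,0}(\mathcal I)$, integrate by parts once, and check that the boundary contributions vanish: at $y_1, y_2$ because $f$ vanishes there, and the potential singular contribution at $y_c$ vanishes because $q(y_c) = 0$ while $f$ is bounded (indeed $(y - y_c)\partial_y f \in L^2$ makes $f \in C^0$ near $y_c$ by a one-dimensional Sobolev/Hardy argument). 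This yields \eqref{eq:ibpmn}. The main obstacle is the middle step — establishing that the "spike" $q_n(y_c^{(n)})$ of the flux at the near-singular point tends to zero; everything else is localization plus routine elliptic and Hardy estimates. I expect to handle the case $s = -$ by the identical argument with the roles of $Z_+$ and $Z_-$ swapped, as the paper has announced.
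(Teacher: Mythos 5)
There is a genuine gap at the step you yourself flag as the key one. You claim that $q(y_c)=0$ is forced because otherwise $\partial_y\Phi = q/\bigl((Z_--c)(Z_+-c)\bigr)\sim q(y_c)/(y-y_c)$ would fail to be locally integrable, contradicting $\Phi\in L^2$. This does not follow: a primitive of $1/(y-y_c)$ is $\log|y-y_c|$, which lies in $L^2$ (indeed in every $L^p$, $p<\infty$), so $\Phi\in L^2$ is perfectly compatible with $q(y_c)\neq 0$. The paper obtains $q(y_c)=q'(y_c)=0$ by an entirely different mechanism: it passes to the limit in the weak formulation \eqref{eq:pvbl} using the Plemelj/Sokhotski decomposition of $1/(Z_+-c_n)$, arrives at the identity \eqref{eq:res} containing the residue term $i\pi\bigl(q'f'+\alpha^2 qf\bigr)(y_c)/\bigl(2b(y_c)Z_+'(y_c)\bigr)$, and then extracts $|q'(y_c)|^2+\alpha^2|q(y_c)|^2=0$ from the imaginary part after inserting the test function $f=\chi\overline{q}\,b(y_c)Z_+'(y_c)$. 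Crucially this argument is only run (and the conclusion $q(y_c)=0$, hence $\Phi\in H^1$ via Hardy, is only claimed) in the case $F\equiv 0$; with a general $F$ the right-hand side of \eqref{eq:pvbl} contributes its own residue and the cancellation is lost.

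A second, related structural problem: you make the strong convergences $\Phi_n\to\Phi$ in $L^2$ and $q_n\to q$ in $H^1$ contingent on $q_n(y_{c}^{(n)})\to 0$, but the lemma asserts these for general $F$, where that vanishing is not available. The paper decouples the two issues. The $H^1$ convergence of $q_n$ follows from the uniform $H^2$ bound \eqref{es2} (a direct consequence of \eqref{eq:eqn2}) together with weak $H^1$ convergence and Rellich; the $L^2$ convergence of $\Phi_n$ follows from the uniform bound $\|(y-y_{c_n})\partial_y\Phi_n\|_{L^p}\leq C$ for every $p<\infty$ (which uses only $\|q_n\|_{L^\infty}\lesssim\|q_n\|_{H^1}\leq C$, not any vanishing at $y_c$) combined with the Kolmogorov--Riesz type compactness lemma of Appendix B. You would need to replace your ``spike goes to zero'' step by these two independent arguments, and then prove $q(y_c)=q'(y_c)=0$ separately, only for $F\equiv 0$, via the limiting absorption identity rather than an integrability obstruction.
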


\begin{proof} 
To facilitate the proof, let us assume that $c\in \mathrm{Ran}\, Z_+$. Then for $\ep<\f{\min_{y \in \mathbb T} b-\|u\|_{L^{\infty}}}{3}$, there exists $N>0$ such that for any $n>N$, $|c_n-c|<\ep.$ Thus there exists $C>0$ such that $|Z_--c_n|>C^{-1}>0$ for $n>N$. Without loss of generality, let us also assume that $n>N$ and $\mathrm{Im}\, c_n>0$. Thus, there exists $y_{c_n}\in \mathcal I$ such that $Z_+(y_{c_n})=\mathrm{Re}\, c_n$ and $y_{c_n}\to y_c$ as $c_n \to c.$

We recall $q_n= (Z_--c_n)(Z_+-c_n)\partial_y \Phi_n$ with $c_n \in \Omega_{\ep_0}\setminus \left(\mathrm{Ran}\, Z_+\cup \mathrm{Ran}\, Z_-\right).$ Dividing \eqref{eq:eln00} by $(Z_- -c_n)(Z_+ - c_n)$ and differentiating, we see that $q_n$ solves the following equation
\begin{equation}\label{eq:odeq}
\partial_y \left( \frac{\partial_y q_n }{(Z_--c_n)(Z_+-c_n)} \right) - \alpha^2 \left( \frac{q_n}{(Z_--c_n)(Z_+-c_n)} \right) = \partial_y \left( \frac{F_n}{(Z_--c_n)(Z_+-c_n)} \right).
\end{equation}
By Equation \eqref{eq:eln00},  $q_n$ also satisfies  
\begin{eqnarray}
\label{eq:eqn1} & q'_n  = F_n + \alpha^2 (Z_--c_n)(Z_+-c_n) \Phi_n,\\
\label{eq:eqn2} & q''_n  -\alpha^2 q_n = F'_n + \alpha^2 Z'_-(Z_+-c_n) \Phi_n+\alpha^2 (Z_--c_n)Z'_+ \Phi_n. 
\end{eqnarray}
From Equation \eqref{eq:eqn2} and the assumptions $\Phi_n \rightharpoonup \Phi$ in $L^2,$ $q_n \rightharpoonup q:= (Z_--c)(Z_+-c)\pa_y \Phi$ in $H^1$ and $F_n \to F$ in $H^1,$ we can infer that 
\begin{equation}\label{es2}
\| q''_n \|_{L^2} \leq \| F'_n\|_{L^2} + \alpha^2 \left( \|\Phi_n\|_{L^2} + \|q_n\|_{H^1} \right),
\end{equation}
which, along with the assumption that $q_n \rightharpoonup(Z_--c)(Z_+-c)\pa_y \Phi$ in $H^1,$ implies strong convergence, i.e.,
$$(Z_--c_n)(Z_+-c_n) \pa_y \Phi_n \rightarrow (Z_--c)(Z_+-c)\pa_y \Phi \text{ in }H^1.$$

By our assumption and Sobolev embedding, $\|q_n\|_{L^\infty(\mathcal I)} \leq \|q_n \|_{H^1(\mathcal I)} < C,$ it is true that $$\|(y-y_{c_n})\pa_y\Phi_n \|_{L^p(\mathcal I)} < C, \ \forall \, p<\infty.$$

Applying the compactness result from Appendix \ref{rmk1}, we have 
$$\Phi_n \to \Phi \text{ in }L^2.$$

Now we consider the particular case $F_n\to 0$ in $H^1$. 
As $q_n$ satisfies,  for $ f \in C_0^2(\mathcal I),$
\begin{equation}\label{eq:pvbl}
-\int_{-\pi}^\pi  \frac{ \left(q'_n   f' + \alpha^2 q_n  f\right)(y)}{(Z_--c_n)(Z_+-c_n)}  \,\mathrm{d}y  = \int_{-\pi}^\pi \frac{(F_n  f')(y)}{(Z_--c_n)(Z_+-c_n)} \,\mathrm{d}y. 
\end{equation}
By the fact that 
\beno
\int_{-\pi}^\pi  \frac{\G(y)}{(Z_--c_n)(Z_+-c_n)}  \,\mathrm{d}y=\int_{-\pi}^\pi  \f{1}{2b(y)}\frac{\G(y)}{Z_--c_n}  \,\mathrm{d}y-\int_{-\pi}^\pi  \f{1}{2b(y)}\frac{\G(y)}{Z_+-c_n}  \,\mathrm{d}y,
\eeno
and 
\begin{equation*}
-\int_{-\pi}^\pi \text{Log}\left(Z_+(y) -c_n \right) \partial_y \left( \frac{\G(y)}{2b(y)Z_+^{'}(y)} \right) \mathrm{d} y=\int_{-\pi}^\pi  \f{1}{2b(y)}\frac{\G(y)}{Z_+-c_n}  \,\mathrm{d}y, 
\end{equation*}
we obtain that the right hand side of Equation \eqref{eq:pvbl} can estimated as
\begin{align*}
\left|\int_{-\pi}^\pi \frac{(F_n  f')(y)}{(Z_--c_n)(Z_+-c_n)} \,\mathrm{d}y\right| \lesssim & \left|\int_{-\pi}^\pi  \f{1}{2b(y)}\frac{(F_n  f')(y)}{Z_--c_n}  \,\mathrm{d}y\right| \\
& +\left|\int_{-\pi}^\pi \text{Log}\left(Z_+(y) -c_n \right) \partial_y \left( \frac{(F_n  f')(y)}{2b(y)Z_+^{'}(y)} \right) \mathrm{d} y\right|\lesssim \|F_n\|_{H^1}.
\end{align*}
Passing to the limit as $n \to \infty,$ we can see that the right hand side of Equation \eqref{eq:pvbl} vanishes as $F_n \to 0$ in $H^1,$ and we obtain
\begin{align}\label{eq:res}
0=-\text{p.v.}\int_{-\pi}^\pi  \frac{ \left(q'  f' + \alpha^2 q  f\right)(y)}{(Z_--c)(Z_+-c)}  \,\mathrm{d}y + i\pi \frac{\left( q'  f' + \alpha^2 q  f\right)(y_{c}) }{2b(y_c)Z_+'(y_c)},  \ \forall  f \in C_0^1(\mathcal I).
\end{align}

Let $\chi$ be a smooth cut-off function such that $0 \leq \chi \leq 1,$ $\chi \equiv 1$ near $y_\lambda$ and $\text{supp}\, \chi \Subset \mathcal I.$ Let us specify a test function $ f(y) = \chi(y)\overline{q(y)} b(y_c)Z'_+(y_c).$ The imaginary part of Equation \eqref{eq:res} then reads $ i\pi  \left(|q'(y_{c})|^2+\alpha^2 |q(y_{c})|^2 \right) =0,$ which in turn reveals that $$q(y_{c})=q'(y_{c})=0.$$

By the facts that $\pa_y{\Phi}(y,c)=\f{q(y,c)}{(Z_--c)(Z_+-c)} \sim \frac{q}{y-y_c},$ $q(y_c) =0$ and Hardy's inequality, we have
\beno
\|\pa_y\Phi\|_{L^2(\mathcal I)}\leq C \|q'\|_{L^2(\mathcal I)}.
\eeno 
Thus, we conclude that $\Phi(\cdot,c) \in H^1(\mathcal I)$. 

Multiplying both sides of Equation \eqref{eq:odeihom} by $ f \in H^1_{w, 0}(\mathcal I),$ passing to the limit as $n \to \infty$ and integrating by parts, we obtain the identity in \eqref{eq:ibpmn}.
\end{proof}

We proceed to prove the analogue of Lemma \ref{lem: m} for $y_0 \in (Z_{s})^{-1}(c)$ which is also a critical point of $Z_s,$ $s=+$ or $-.$ There are several scenarios depending on how $c_n$ approaches $c.$ 

(1)  In the case that $c=Z_s(y_0)$ and $Z_s ''(y_0) \left( \text{Re}\,c_n -c \right) \geq 0,$ by the assumptions on $u$ and $b,$ the intersection of $(Z_s)^{-1}(\text{Re}\, c_n)$ with a sufficiently small neighborhood of $y_0$ consists of two points $y_{c_n}^\ell$ and $y_{c_n}^r$ such that $y_{c_n}^\ell \leq  y_0  \leq y_{c_n}^r.$ (If $c =\text{Re}\,c_n,$ equality holds and the two points shrink into one.) Furthermore, it's true that modulo subsequences either $|\text{Re}\,c_n -c| \geq |\text{Im}\,c_n|$ or $|\text{Re}\,c_n -c| \leq |\text{Im}\,c_n|, \forall n \in \mathbb N.$ 

(2)  In the case $Z''_s (y_0) \left( \text{Re}\,c_n -c \right) < 0,$ any small neighborhood of $y_0$ no longer intersects $(Z_s)^{-1}(\text{Re}\, c_n).$ Instead, $\text{Re}\, (Z_s-c_n)$ is sign-definite as $\text{Re}\, (Z_s -c_n) = (Z_s-c) + (c-\text{Re}\,c_n).$

We first consider the case (2) together with the case (1) when $|\text{Re}\,c_n - c| \leq |\text{Im}\,c_n |, \forall n \in \mathbb N.$ 
\begin{lemma}\label{lem: i}
Let $\{c_n\}_{n=1}^\infty \subset \left(\Omega_{\ep_0}\setminus \left(\mathrm{Ran}\,Z_+\cup \mathrm{Ran}\, Z_- \right) \right)$ be such that $c_n \to c$ as $n \to \infty$ for a certain $c \in \left(\mathrm{Ran}\, Z_+ \cup \mathrm{Ran}\,Z_- \right).$ Let the triple $\{c_n,  \Phi_n,  F_n  \}_{n=1}^\infty$ solve \eqref{eq:eln00} and satisfy 
\begin{itemize}
\item $\Phi_n \rightharpoonup \Phi$ in $L^2$ and $(Z_+- c_n)(Z_- -c_n)\partial_y\Phi_n \rightharpoonup (Z_+- c)(Z_- -c) \pa_y\Phi$ in $L^2$ on $\mathcal I_0,$ 
\item $F_n \to F$ in $H^1$ on $\mathcal I_0,$ 
\end{itemize}
for some interval $\mathcal{I}_0 := [y_1,  y_2] \subset \mathbb T$ such that for $s=+$ or $-,$ one of the following situations occurs:
\begin{enumerate}
\item There exists $y_0 \in (Z_s)^{-1}(c)$ in $\mathcal I_0$ at which $Z_s'(y_0) =0;$ $Z''_s(y_0) Z''_s(y) >0, \forall y \in \mathcal I_0.$ For $n$ sufficiently large, $|\mathrm{Re}\, c_n-c|\leq |\mathrm{Im}\, c_n|$ and $Z_s''(y_0) \left( \mathrm{Re}\,c_n -c \right) \geq 0.$
\item There exists $y_0 \in (Z_s)^{-1}(c)$ in $\mathcal I_0$ at which $Z_s'(y_0) =0;$ $Z''_s(y_0) Z''_s(y) >0, \forall y \in \mathcal I_0.$ For $n$ sufficiently large, $Z_s''(y_0) \left( \mathrm{Re}\,c_n -c \right) < 0.$ 
\end{enumerate}
Then for $n$ sufficiently large, 
\begin{enumerate}
\item $(y-y_0)\pa_y\Phi_n\in L^2$, $(y-y_0)\pa_y\Phi\in L^2$ and 
\begin{align}\label{eq:ibp54}
- \int_{\mathcal{I}_0} (Z_+ -c)(Z_--c) \left( \pa_y\Phi f' + \al^2\Phi f \right) \, \mathrm{d}y=\int_{\mathcal I_0} F  f\,\mathrm{d}y, \ \forall  f \in H^1_{w,0},
\end{align}
where $H^1_{w,0} =\left\{ f \in L^2 : \left((y-y_0)\pa_y f \right) \in L^2 \text{ and } f \vert_{y=y_1} =f\vert_{y=y_2} =0 \right\};$
\item there exists a constant $C$ independent of $c_n$ such that 
\begin{equation}\label{esc}
 \left|\Phi_n (y_0) \right | \leq  C \left|(Z_+(y_0) -c_n)(Z_-(y_0) -c_n) \right|^{-\frac{1}{4}};
\end{equation}
\item there exists a constant $C$ independent of $c_n$ such that 
\begin{equation}\label{eslam}
 \left| \pa_y\Phi_n (y_0) \right | \leq  C \left|(Z_+(y_0) -c_n)(Z_-(y_0) -c_n) \right|^{-\frac{3}{4}}.
\end{equation}
\end{enumerate}
\end{lemma}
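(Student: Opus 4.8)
\textbf{Proof proposal for Lemma \ref{lem: i}.}

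The plan is to treat the degenerate point $y_0$ by building an explicit local change of variables that flattens $Z_s-c$ near the critical point, thereby reducing the genuinely degenerate ODE \eqref{eq:eln00} to a perturbation of a model operator whose solutions can be estimated by hand. Since $Z_s'(y_0)=0$ and $Z_s''(y_0)\neq0$, there is a diffeomorphism $y\mapsto z$ near $y_0$ with $Z_s(y)-Z_s(y_0)=\pm z^2$; in the regime covered by cases (1) with $|\mathrm{Re}\,c_n-c|\le|\mathrm{Im}\,c_n|$ and (2), the quantity $\mathrm{Re}(Z_s-c_n)$ is either sign-definite or dominated by $|\mathrm{Im}\,c_n|$, so $|Z_s-c_n|\gtrsim z^2+|\mathrm{Im}\,c_n|$ with no cancellation along the real axis. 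The other factor $Z_{-s}-c_n$ stays bounded away from $0$ (by the Stern condition $0<|u|<b$, as already used in the excerpt to get $|Z_--c_n|>C^{-1}$). So after dividing by the nondegenerate factor, the operator $q_n\mapsto q_n''-\alpha^2 q_n$ is being compared against a weight behaving like $z^2+\delta_n^2$ with $\delta_n^2\sim|\mathrm{Im}\,c_n|$.

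First I would establish the membership statements and the integration-by-parts identity \eqref{eq:ibp54}. For this I would multiply \eqref{eq:eln00} by $\overline{\Phi_n}$ (localized by a cutoff $\chi$ supported in $\mathcal I_0$), integrate by parts, and split into real and imaginary parts as in the nondegenerate case \eqref{re}--\eqref{im}. The bound $\|\Phi_n\|_{L^2}+\|q_n\|_{H^1}=1$ plus $F_n\to F$ in $H^1$ gives uniform control of $\int (Z_+-c_n)(Z_--c_n)(|\pa_y\Phi_n|^2+\alpha^2|\Phi_n|^2)$; since the nondegenerate factor is bounded below, this yields a uniform bound on $\int |Z_s-c_n||\pa_y\Phi_n|^2$, hence — using $|Z_s-c_n|\gtrsim|z|^2$ and the change of variables — a uniform bound on $\|(y-y_0)\pa_y\Phi_n\|_{L^2}$ on a neighborhood of $y_0$ (away from $y_0$ we already have $H^1_{loc}$ control as remarked before Lemma \ref{lem: m}). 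Weak lower semicontinuity then transfers this to $(y-y_0)\pa_y\Phi\in L^2$, which is exactly the regularity making $\Phi$ a legitimate test element of $H^1_{w,0}$; passing to the limit in the weak formulation of \eqref{eq:eln00} against $f\in H^1_{w,0}$ (the boundary terms vanish because $(Z_+-c)(Z_--c)\pa_y\Phi\in L^2$ and $f$ vanishes at $y_1,y_2$) produces \eqref{eq:ibp54}.

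The pointwise bounds \eqref{esc}--\eqref{eslam} are the crux and the expected main obstacle. Here I would work directly with $q_n=(Z_--c_n)(Z_+-c_n)\pa_y\Phi_n$, which satisfies $q_n''-\alpha^2 q_n=F_n'+\alpha^2(Z_-'(Z_+-c_n)+(Z_--c_n)Z_+')\Phi_n$ (equation \eqref{eq:eqn2}); the right side is bounded in $L^2$ uniformly in $n$ by \eqref{es2}, so $q_n$ is bounded in $H^2$, hence in $C^{1,1/2}$ by Sobolev embedding — in particular $q_n(y_0)$ and $q_n'(y_0)$ are $O(1)$. Then $\pa_y\Phi_n(y_0)=q_n(y_0)/\big((Z_-(y_0)-c_n)(Z_+(y_0)-c_n)\big)$, and since $|Z_-(y_0)-c_n|\sim1$ while $|Z_+(y_0)-c_n|=|Z_+(y_0)-c_n|$, one naively gets $|\pa_y\Phi_n(y_0)|\lesssim|(Z_+(y_0)-c_n)(Z_-(y_0)-c_n)|^{-1}$, which is one power worse than \eqref{eslam}. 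To gain the extra $1/4$, I would exploit that $q_n(y_0)$ itself is small: writing $q_n(y_0)=q_n(y_c^{*})+\int_{y_c^{*}}^{y_0}q_n'$ where $y_c^{*}$ is a (complex-parametrized) near-zero of $Z_s-c_n$, or more robustly using the identity \eqref{eq:res}-type argument localized at $y_0$, one shows $q_n'(y_0)=o(1)$ or more precisely $|q_n(y_0)|\lesssim|Z_+(y_0)-c_n|^{1/2}$ by testing the weak form \eqref{eq:pvbl} with $f$ concentrated at scale $\sqrt{|\mathrm{Im}\,c_n|}$ around $y_0$ and using that $\int |z^2+\delta_n^2|^{-1}dz\sim\delta_n^{-1}$ over that scale against the $H^2$ bound on $q_n$. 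This scaling/interpolation step — extracting the sharp half-power cancellation from the degenerate weight — is where the real work lies; the bound \eqref{esc} for $\Phi_n(y_0)$ then follows by integrating $\pa_y\Phi_n$ from a point at distance $\sim\sqrt{|\mathrm{Im}\,c_n|}$ (where $\Phi_n$ is $O(1)$ in $L^2$-averaged sense) and using \eqref{eslam}, which loses only the integrable weight $|z|\cdot|z^2+\delta_n^2|^{-3/4}$ contributing an extra factor $\delta_n^{1/2}\sim|Z_+(y_0)-c_n|^{1/2}$, upgrading the exponent from $-3/4$ to $-1/4$.
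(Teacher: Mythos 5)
Your treatment of conclusion (1) is essentially the paper's: localize with a cutoff, test \eqref{eq:eln00} against $\overline{\Phi_n}\chi$, separate real and imaginary parts, and use the sign information in the two cases (either $\mathrm{Re}\,c_n-c$ is dominated by $\mathrm{Im}\,c_n$, or $c-\mathrm{Re}\,c_n$ has the same sign as $Z_s-c$) to extract the uniform bound on $\|(y-y_0)\pa_y\Phi_n\|_{L^2}$ and then pass to the limit by density. That part is sound.

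The genuine gap is in the pointwise bounds \eqref{esc}--\eqref{eslam}, which you correctly identify as the crux but do not actually prove. Your route to \eqref{eslam} hinges on the claim $|q_n(y_0)|\lesssim|Z_+(y_0)-c_n|^{1/2}$, which is only asserted: neither of the two mechanisms you sketch (integrating $q_n'$ from a ``near-zero'' $y_c^*$ of $Z_s-c_n$, where you have no smallness of $q_n(y_c^*)$ at a critical point, or testing the weak form with a test function concentrated at scale $\sqrt{|\mathrm{Im}\,c_n|}$) is carried out, and the claimed power $1/2$ appears too strong -- the natural power produced by the available $L^2$ information is $1/4$, which is exactly what \eqref{eslam} needs and no more. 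The paper gets this by a pigeonhole argument: since $\|(y-y_0)\pa_y\Phi_n\|_{L^2}\leq C$, there is $\wt y$ with $|\wt y-y_0|\sim\d_n:=\sqrt{|c-c_n|}$ and $|\pa_y\Phi_n(\wt y)|\lesssim\d_n^{-3/2}$; integrating \eqref{eq:eln00} from $\wt y$ to $y_0$ and using $|Z_+(\wt y)-c_n|\lesssim|c-c_n|$ then yields $|q_n(y_0)|\lesssim|c-c_n|^{1/4}$, hence \eqref{eslam}. Your derivation of \eqref{esc} has a second gap: it uses a pointwise bound $|\pa_y\Phi_n(y)|\lesssim(|y-y_0|^2+|c-c_n|)^{-3/4}$ on a whole neighborhood of $y_0$, which is strictly stronger than \eqref{eslam} (a bound at the single point $y_0$) and is nowhere established. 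The paper instead obtains \eqref{esc} directly from Gagliardo--Nirenberg, $|\Phi_n(y_0)|\lesssim\|\pa_y\Phi_n\|_{L^2}^{1/2}\|\Phi_n\|_{L^2}^{1/2}$, combined with the bound $\left\|\sqrt{|\mathrm{Im}\,c_n|}\,\pa_y\Phi_n\right\|_{L^2}\leq C$ coming from the imaginary-part identity \eqref{eichi}; no pointwise control of $\pa_y\Phi_n$ away from $y_0$ is needed. As written, your argument for (2) and (3) is a plan with its key quantitative step missing.
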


\begin{proof} We recall that we shall only prove the part of the lemma for $Z_+(y_0)=c,$ as the proof when $Z_-(y_0) =c$ is along the same lines. As $\text{Ran}\,Z_+ \cap \text{Ran}\, Z_- =\emptyset,$ we can find some constant $C>1$ such that $C^{-1} < \left( \text{Re}\, c_n - Z_- \right)  <C,$ provided that $n$ is sufficiently large. Without loss of generality,  we may assume that $Z_+''(y_0) > 0,$ i.e., $Z_+(y_0)$ is a local minimum. 

We can choose $h>0$ small enough independent of $n$ so that $\mathcal{I}_{2h}:= [y_0-2h,  y_0+2h] \subset \mathcal I_0.$ We also denote $\mathcal{I}_{h}:= [y_0-h,  y_0+h].$ Let us introduce a cut-off function $\chi$ satisfying 
\begin{enumerate}
\item $\chi \equiv 1$ on $\mathcal I_{h}$ and $\chi \equiv 0$ outside $\mathcal I_{2h},$
\item $\chi \in C^1$ and $0 \leq \chi \leq 1,$ 
\end{enumerate}
Integrating by parts,  we have
\begin{equation}\notag
\int_{\mathcal I_0} \Phi_n \overline{\Phi_n} \chi \,\mathrm{d}y = -\int_{\mathcal I_0} (y-y_0)\Phi_n \partial_y \left(\overline{\Phi_n} \chi \right)\,\mathrm{d}y -\int_{\mathcal I_0} (y-y_0)\partial_y \Phi_n \overline{\Phi_n}  \chi\, \mathrm{d}y,
\end{equation}
which leads to
\begin{equation}\label{eq:echi2}
\|\sqrt{\chi}\Phi_n \|_{L^2(\mathcal I_{2h})} \leq 2 \|(y-y_0) \partial_y \Phi_n \sqrt{\chi} \|_{L^2(\mathcal I_{2h})} + \|\Phi_n\|_{L^2(\mathcal I_{2h}\setminus \mathcal I_{h})}.
\end{equation}

Multiplying Equation \eqref{eq:eln00} by $\overline{\Phi_n} \chi,$ integrating by parts and taking the real part and the imaginary part separately, we obtain the following identities 
\begin{equation}\label{eq:rchi} 
\begin{split}
& -\int_{\mathcal I_0} \left( (Z_+ -\text{Re}\,c_n)(Z_- - \text{Re}\, c_n) -(\text{Im}\,c_n)^2 \right)\left( |\partial_y \Phi_n|^2 +\alpha^2 |\Phi_n|^2 \right) \chi \,\mathrm{d}y\\
&=  \text{Re}\,\langle F_n, \chi \overline{\Phi_n}  \rangle_{L^2(\mathcal I_0)} + \text{Re}\,\int_{\mathcal I_{2h}\setminus \mathcal I_h} (Z_+ -c_n)(Z_- -c_n)\overline{\Phi_n}  \partial_y \Phi_n \,\chi' \mathrm{d}y,
\end{split}
\end{equation}
\begin{equation}\label{eq:ichi}
\begin{split}
& - \int_{\mathcal I_0} \text{Im}\,c_n (Z_+ + Z_- -2 \text{Re}\, c_n) \left( |\partial_y \Phi_n|^2 +\alpha^2 |\Phi_n|^2 \right) \chi\,\mathrm{d}y\\
 &=   \text{Im}\,\langle F_n, \chi\overline{\Phi_n}  \rangle_{L^2(\mathcal I_0)} + \text{Im}\,\int_{\mathcal I_{2h}\setminus \mathcal I_h} (Z_+ -c_n)(Z_- -c_n)\overline{\Phi_n}  \partial_y \Phi_n \chi'\,\mathrm{d}y.
\end{split} 
\end{equation}

To estimate \eqref{eq:ichi}, we note that for $h$ small enough and for $n$ large enough, it holds that 
$$|Z_+(y)-\mathrm{Re}\, c_n|\leq Ch+\mathrm{dist}(c_n,\mathrm{Ran}\,Z_+)\leq \ep_0+Ch \leq Ch, \ \forall y \in \mathcal I_{2h},$$ 
which leads to the following bound:
\begin{align*}
Z_+ + Z_- -2 \text{Re}\, c_n \leq & Z_-(y)-\text{Re}\, c_n+|Z_+-\text{Re}\, c_n| \leq -C^{-1}+Ch \leq -\f12C^{-1}.
\end{align*}
Hence, it follows from \eqref{eq:ichi} that
\begin{equation}\label{eichi}
\begin{split}
& |\text{Im}\,c_n| \left(\|\sqrt{\chi}\pa_y\Phi_n\|_{L^2(\mathcal I_{2h})}^2+\alpha^2 \|\sqrt{\chi}\Phi_n\|_{L^2(\mathcal I_{2h})}^2\right)\\
\lesssim &   \int_{\mathcal I_{2h}} \left|F_n \chi\overline{ \Phi_n} \right| \mathrm{d}y + \|(Z_+ -c_n)(Z_- -c_n)\pa_y\Phi_n\|_{L^2(\mathcal I_{2h}\setminus \mathcal I_h)}^2.
\end{split}
\end{equation}
In particular, \eqref{eichi} shows that $\left\| \sqrt{|\text{Im}\, c_n|} \partial_y \Phi_n \right\|_{L^2(\mathcal I_{h})} <C.$

As $\left( Z_+(y) -\text{Re}\,c_n \right)  =  \left( Z_+(y) -Z_+(y_0) + c -\text{Re}\, c_n \right),$ we can write \eqref{eq:rchi} as
\begin{equation}\label{sad}
\begin{split}
& \int_{\mathcal I_{2h}} \left(Z_+(y) -Z_+(y_0) \right)( \text{Re}\,c_n- Z_- ) \left( |\partial_y \Phi_n|^2 + \alpha^2|\Phi_n|^2 \right) \chi \,\mathrm{d}y \\
= & \text{Re}\,\langle F_n, \chi \overline{\Phi_n}  \rangle_{L^2(\mathcal I_0)} + \text{Re}\,\int_{\mathcal I_{2h}\setminus \mathcal I_h} (Z_+ -c_n)(Z_- -c_n)\overline{\Phi_n}  \partial_y \Phi_n \,\chi' \mathrm{d}y \\
& + \int_{\mathcal I_{2h}} \left( \left(\text{Re}\,c_n- c \right)( \text{Re}\,c_n- Z_- ) -|\text{Im}\, c_n|^2 \right) \left( |\partial_y \Phi_n|^2 + \alpha^2|\Phi_n|^2 \right)\chi \,\mathrm{d}y.
\end{split}
\end{equation}

For the case $|\text{Re}\,c_n-c|\leq | \text{Im}\, c_n|$ and $Z_+''(y_0) \left( \text{Re}\,c_n -c \right) \geq 0,$ we notice that the term on right hand side of \eqref{sad} containing $(\text{Re}\,c_n-c)$ can be dominated by the estimate of \eqref{eq:ichi}. Hence, we have
\begin{equation}\notag
\begin{split}
& \int_{\mathcal I_{2h}} \left(Z_+(y) -Z_+(y_0) \right)( \text{Re}\,c_n- Z_- ) |\partial_y \Phi_n|^2\chi \,\mathrm{d}y \\
\lesssim  & \left| \text{Re}\,\int_{\mathcal I_{2h}} F_n \overline{ \Phi_n}  \mathrm{d}y \right| + \left| \text{Re}\,\int_{\mathcal I_{2h}\setminus \mathcal I_h} (Z_+ -c_n) (Z_- -c_n)\overline{ \Phi_n}\partial_y \Phi_n \,\chi'  \mathrm{d}y\right|\\
& +\left| \text{Im}\,c_n \right| \int_{\mathcal I_{2h}} \left( \text{Re}\,c_n-Z_-  \right) \left( |\partial_y \Phi_n|^2+ \alpha^2 |\Phi_n|^2 \right) \mathrm{d}y\\
\lesssim &  C \int_{\mathcal I_{2h}} \left|F_n \chi\overline{ \Phi_n} \right| \mathrm{d}y  + C \int_{\mathcal I_{2h}\setminus \mathcal I_h} \left|  (Z_+ -c_n ) ( Z_- -c_n  )\overline{\Phi_n} \partial_y \Phi_n \right| \mathrm{d}y,\\
\end{split}
\end{equation}
whereas in the case $Z_+''(y_0) \left( \text{Re}\,c_n -c \right) < 0,$ $\left(c -\text{Re}\, c_n \right)$ has the same sign as $\left( Z_+(y) - c \right),$ from which we infer
\begin{equation}\notag
\begin{split}
& \int_{\mathcal I_{2h}}  \left(Z_+(y) -Z_+(y_0) \right)(\text{Re}\, c_n - Z_-) |\partial_y \Phi_n|^2\chi \,\mathrm{d}y \\
 \leq &\int_{\mathcal I_{2h}}  \left| \chi F_n \overline{ \Phi_n} \right| \mathrm{d}y  + \left| \int_{\mathcal I_{2h}\setminus \mathcal I_h} (Z_+ -c_n)(Z_- -c_n) \overline{ \Phi_n} \partial_y \Phi_n \,\chi'  \mathrm{d}y\right|.
\end{split}
\end{equation}

Using the fact that $(Z_+ -c) \sim (y-y_0)^2$ in $\mathcal I_0,$ we have,  by \eqref{eq:echi2} and \eqref{eq:rchi},
\begin{equation}\label{eq:bdd1}
\begin{split}
\| (y-y_0) \partial_y \Phi_n \sqrt{\chi}\|_{L^2(\mathcal I_{2h})}^2 \leq &  C\left( \|\Phi_n \|_{L^2(\mathcal I_{2h} \setminus \mathcal I_h)}^2 + \|(Z_+ -c_n) \pa_y \Phi_n \|_{L^2(\mathcal I_{2h} \setminus \mathcal I_h)}^2 \right)\\
&+ C\|F_n \|_{L^2(\mathcal I_0)}^2.
\end{split}
\end{equation}

Multiplying both sides of Equation \eqref{eq:eln00} by $ f \in H^1_{0}(\mathcal I_0)$ and passing to the limit as $n \to \infty,$ we have
\begin{align*}
-\int_{\mathcal{I}_0} (Z_+ -c)(Z_- -c) \left( \pa_y\Phi f' + \al^2\Phi f\right) \,\mathrm{d}y= \int_{\mathcal I_0} F  f \,\mathrm{d}y, \ \forall  f \in H^1_0(\mathcal I_0).
\end{align*}
Estimate \eqref{eq:bdd1} ensures that  $(y-y_0)\pa_y\Phi\in L^2$
and
\beq\label{eq:Phi_n-L^2}
\|\Phi_n\|_{L^2(\mathcal I_0)}\lesssim \|(Z_+ -c_n)(Z_--c_n) \pa_y \Phi_n \|_{L^2(\mathcal I_{2h} \setminus \mathcal I_h)}+\|\Phi_n\|_{L^2(\mathcal I_{2h} \setminus \mathcal I_h)}+\|F_n\|_{L^2}.
\eeq 
Thus, by the density of $H^1_0(\mathcal I_0)$ in $H^1_{0,w}(\mathcal I_0),$ Equation \eqref{eq:ibp54} holds.

It follows from Gargliardo-Nirenberg inequality and the fact that $\left \|\sqrt{\text{Im}\,c_n} \pa_y \Phi_n \right\|_{L^2} < C$ that 
\begin{align*}
|\Phi_n(y_0)| \leq \|\Phi_n\|_{L^\infty} \lesssim \|\pa_y \Phi_n\|_{L^2}^{\frac{1}{2}} \|\Phi_n\|_{L^2}^{\frac{1}{2}} \lesssim |\text{Im}\,c_n|^{-\frac{1}{4}} \lesssim |Z_+(y_0)-c_n|^{-\frac{1}{4}}.
\end{align*}
Thus, Estimate \eqref{esc} holds.

As for Estimate \eqref{eslam}, we can infer from the fact that $(y -y_0) \partial_y \Phi_n \in L^2$ the existence of $\wt y \in [y_0-2\d_n,y_0-\d_n]$ such that $\left| \partial_y \Phi_n (\wt y) \right| < C\d_n^{-\frac{3}{2}}.$ We further set $\d_n=\sqrt{\left| c-c_n \right|}.$

Since $q_n \in H^2,$ as shown in \eqref{es2}, and $F_n \in H^1,$ we can integrate \eqref{eq:eln00} from $\wt y$ to $y_0,$ which results in 
\begin{equation}\label{strctr}
\begin{split}
&(Z_+(y_0)-c_n)(Z_-(y_0)-c_n) \partial_y \Phi_n (y_0)\\ 
= & \left( (Z_+ (\wt y)-c)+ (c- c_n) \right) (Z_-(\wt y)-c_n) \partial_y \Phi_n (\wt y)\\
& + \int_{\wt y}^{y_0} F_n(y) \,\mathrm{d}y + \alpha^2 \int_{\wt y}^{y_0} (Z_+(y)-c_n)(Z_-( y)-c_n) \Phi_n (y)\, \mathrm{d}y.
\end{split}
\end{equation}
By Equation \eqref{strctr} and the fact that $|Z_+(\wt y) -c| \leq C \d_n^2 =C|c- c_n|,$ we have
\begin{equation}
|\partial_y \Phi_n(y_0) | \leq  C \frac{\left| c- c_n \right|}{\left| Z_+(y_0)-c_n  \right| \d_n^{\frac{3}{2}}} + \frac{\d_n}{|Z_+(y_0)-c_n |} \leq  \frac{C}{ \left| Z_+(y_0) -c_n \right|^{\frac{3}{4}} }.
\end{equation}
\end{proof}

We then address the scenario when $y_0 \in (Z_s)^{-1}(c)$ with $Z''_s (y_0) \left( \text{Re}\,c_n -c \right) \geq 0,$ $s=+$ or $-,$ and $|\text{Re}\, c_n-c|\geq |\text{Im}\, c_n|.$ We recall that $y_{c_n}^\ell$ and $y_{c_n}^r$ are the two points in a sufficiently small neighborhood of $y_0$ such that $y_{c_n}^\ell \leq y_0 \leq y_{c_n}^r$ and $\text{Re}\,c_n = Z_s \left(y_{c_n}^\ell \right) = Z_s (y_{c_n}^r).$

\begin{lemma}\label{lem: r}
Let $\{c_n\}_{n=1}^\infty \subset \left(\Omega_{\ep_0}\setminus \left(\mathrm{Ran}\,Z_+\cup \mathrm{Ran}\, Z_- \right) \right)$ be such that $c_n \to c$ as $n \to \infty$ for a certain $c \in \left(\mathrm{Ran}\, Z_+ \cup \mathrm{Ran}\,Z_- \right).$ Let the triple $\{c_n,  \Phi_n,  F_n  \}_{n=1}^\infty$ solve \eqref{eq:eln00} and satisfy 
\begin{itemize}
\item $\Phi_n \rightharpoonup \Phi$ in $L^2$ and $(Z_+- c_n)(Z_- -c_n)\partial_y\Phi_n \rightharpoonup (Z_+- c)(Z_- -c) \pa_y\Phi$ in $H^1$ on $\mathcal I_0,$ 
\item $F_n \to F$ in $H^1$ on $\mathcal I_0,$ 
\end{itemize}
for some interval $\mathcal{I}_0:= [y_1,  y_2] \subset \mathbb T$ such that there exists $y_0 \in (Z_s)^{-1}(c)$ in $\mathcal I_0$ at which $Z'_s(y_0)=0;$ $Z_s''(y_0) Z''_s(y) >0, \forall y \in \mathcal I_0;$ $|\mathrm{Re}\, c_n-c|\geq  |\mathrm{Im}\, c_n|$ for sufficiently large $n$ and $Z_s''(y_0) \left( \mathrm{Re}\,c_n -c \right) \geq 0,$ where $s=+$ or $-.$

Then $(y-y_0)\pa_y\Phi\in L^2$ and 
\begin{align}\label{eq:55}
-\int_{\mathcal{I}_0} (Z_+ -c)(Z_- -c) \left(\pa_y\Phi f' +\al^2 \Phi f \right) \, \mathrm{d}y=\int_{\mathcal I_0} F  f\,\mathrm{d}y, \ \forall  f \in H^1_{w,0},
\end{align}
where $H^1_{w,0} =\left\{ f \in L^2 : \left((y-y_0)\pa_y f\right) \in L^2 \text{ and } f\vert_{y=y_1} =f\vert_{y=y_2} =0 \right\}.$
\end{lemma}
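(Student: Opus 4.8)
\emph{Proof plan.} I will work directly with the weak limits $\Phi$ and $q:=(Z_--c)(Z_+-c)\pa_y\Phi$, reading off the regularity of $\Phi$ at $y_0$ from the limiting equation together with the mere fact that $\Phi\in L^2$, and then passing to the limit in the weak formulation. In contrast with Lemma~\ref{lem: i} I neither need nor will prove a uniform-in-$n$ estimate: the point of isolating the present case is exactly that here the two non-degenerate layer points $y_{c_n}^\ell, y_{c_n}^r$ coalesce into $y_0$ as $n\to\infty$ (with $|Z_s'(y_{c_n}^{\ell,r})|\sim\sqrt{|\mathrm{Re}\,c_n-c|}\to 0$), which would make such an estimate delicate, but the limiting argument below avoids it entirely. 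As usual I treat only $Z_+(y_0)=c$, and WLOG $Z_+''(y_0)>0$, so $Z_+$ is strictly convex on $\mathcal I_0$; hence $y_0$ is its only zero of $Z_+-c$ there and, since $0<|u|<b$ forces $\mathrm{Ran}\,Z_+\cap\mathrm{Ran}\,Z_-=\emptyset$, the coefficient $(Z_+-c)(Z_--c)$ vanishes on $\mathcal I_0$ only at $y_0$, there to second order, with $Z_-(y_0)-c=-2b(y_0)\neq 0$.

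The first step is to pass to the limit in the equation. As in the preamble of this section one has $q=(Z_--c)(Z_+-c)\pa_y\Phi$ in $\mathcal D'(\mathcal I_0)$, and testing \eqref{eq:eln00} against $C_0^\infty(\mathcal I_0)$ and using the assumed convergences $\Phi_n\rightharpoonup\Phi$, $q_n\rightharpoonup q$ and $F_n\to F$ yields $\pa_y q-\al^2(Z_--c)(Z_+-c)\Phi=F$ in $\mathcal D'(\mathcal I_0)$. Since the right-hand side lies in $L^2(\mathcal I_0)$ (here $F\in H^1$, $\Phi\in L^2$ and $Z_\pm\in C^2$), we get $\pa_y q\in L^2$, so $q\in H^1(\mathcal I_0)\hookrightarrow C^0(\mathcal I_0)$ and in particular $q(y_0)$ is well defined.

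The heart of the proof — and the step I expect to be the only delicate one — is the claim $q(y_0)=0$, which I would force from $\Phi\in L^2$ by contradiction. On the open set $\mathcal I_0\setminus\{y_0\}$ the coefficient $(Z_+-c)(Z_--c)$ is $C^2$ and nonvanishing, so there $\pa_y\Phi$ coincides with the continuous function $g:=q/\big((Z_+-c)(Z_--c)\big)$ and $\Phi\in C^1(\mathcal I_0\setminus\{y_0\})$. Near $y_0$ one has $(Z_+-c)(Z_--c)=-b(y_0)Z_+''(y_0)(y-y_0)^2(1+o(1))$, so if $q(y_0)\neq 0$ then $g(y)= a\,(y-y_0)^{-2}(1+o(1))$ with $a=-q(y_0)/\big(b(y_0)Z_+''(y_0)\big)\neq 0$, which is not integrable at $y_0$; integrating $\pa_y\Phi=g$ on $(y_0,y_0+\eta)$ then shows $\Phi$ has a genuine $(y-y_0)^{-1}$ singularity there, so $\Phi\notin L^2$ near $y_0$, contradicting $\Phi\in L^2$. (The care required is purely in the distributional bookkeeping near $y_0$: one uses the identity $(Z_+-c)(Z_--c)\pa_y\Phi=q$ on $\mathcal I_0$ to pin $\pa_y\Phi$ down as the explicit function $g$ off $y_0$, and then checks that the non-integrable blow-up of $g$ genuinely destroys square-integrability of $\Phi$.)

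Granting $q(y_0)=0$, Hardy's inequality applied to $q\in H^1$ gives $q(\cdot)/(y-y_0)\in L^2$ near $y_0$; since $(y-y_0)^2/\big((Z_+-c)(Z_--c)\big)$ is bounded there, $(y-y_0)\pa_y\Phi=\big(q/(y-y_0)\big)\big((y-y_0)^2/((Z_+-c)(Z_--c))\big)\in L^2$ near $y_0$, while away from $y_0$ this is clear, so $(y-y_0)\pa_y\Phi\in L^2(\mathcal I_0)$. For \eqref{eq:55}: as $c_n\notin\s(M_\al)$, each $\Phi_n$ is a classical solution, so testing \eqref{eq:eln00} against $f\in H^1_0(\mathcal I_0)$ and integrating by parts gives $-\int q_nf'-\al^2\int(Z_+-c_n)(Z_--c_n)\Phi_nf=\int F_nf$; letting $n\to\infty$ — using $q_n\to q$ in $H^1$, $(Z_+-c_n)(Z_--c_n)\to(Z_+-c)(Z_--c)$ uniformly, $\Phi_n\rightharpoonup\Phi$ in $L^2$ and $F_n\to F$ in $L^2$ — together with $q=(Z_+-c)(Z_--c)\pa_y\Phi$ yields \eqref{eq:55} for $f\in H^1_0(\mathcal I_0)$. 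Finally, a truncation argument ($f\mapsto f\,(1-\zeta((y-y_0)/\eta))$ with $\zeta\equiv 1$ near $0$, letting $\eta\to 0$) shows $H^1_0(\mathcal I_0)$ is dense in $H^1_{w,0}$ for the norm $\|f\|_{L^2}+\|(y-y_0)\pa_yf\|_{L^2}$, whence \eqref{eq:55} extends to all $f\in H^1_{w,0}$.
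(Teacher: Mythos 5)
Your proof is correct, but it takes a genuinely different and more elementary route than the paper. The paper constructs the homogeneous solutions $\varphi_n^{\ell},\varphi_n^{r}$ normalized at the turning points $y_{c_n}^{\ell},y_{c_n}^{r}$ (via Appendix A), writes $\Phi_n$ through an explicit variation-of-parameters formula, shows the coefficients $\mu_n^{\ell},\mu_n^{r}$ of the singular mode converge to $0$ (because that mode behaves like $(y-y_0)^{-1}\notin L^2$, contradicting $\Phi_n\rightharpoonup\Phi$ in $L^2$), and then reads off a pointwise bound $|(y-y_0)\pa_y\Phi|\lesssim 1$ from the surviving terms. You extract the same information directly from the limit: the hypothesis gives $q=(Z_+-c)(Z_--c)\pa_y\Phi\in H^1\hookrightarrow C^0$, the quadratic degeneracy of the coefficient forces $q(y_0)=0$ (otherwise $\Phi$ would have a non-square-integrable $(y-y_0)^{-1}$ singularity), and Hardy's inequality then yields $(y-y_0)\pa_y\Phi\in L^2$; the weak formulation and the density of $H^1_0$ in $H^1_{w,0}$ are handled as in the paper. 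The underlying mechanism (the $L^2$ bound on $\Phi$ kills the singular solution) is identical, but your implementation avoids the entire homogeneous-solution machinery; what you lose is the explicit representation \eqref{eq:phii-} and the $L^\infty$ bound on $(y-y_0)\pa_y\Phi$, which the paper reuses in Lemma \ref{stgcv} to get strong convergence and the pointwise estimates \eqref{f0}--\eqref{pf0} at $y_0$ — so your shortcut proves this lemma but would not replace the construction elsewhere. One harmless slip: you invoke ``$q_n\to q$ in $H^1$'' when only weak $H^1$ convergence is assumed, but weak $L^2$ convergence of $q_n$ already suffices to pass to the limit in $\int q_n f'$.
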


\begin{proof}  As in Appendix \ref{sturm},  we can construct a solution, which we denote by $\varphi^\ell_n$ (or $\varphi^r_n$), to the homogeneous equation 
\begin{align}\label{eq:hmg}
& \partial_y ((Z_+ -c_n)(Z_- -c_n )\partial_y \varphi) -\alpha^2(Z_+ -c_n)(Z_- -c_n)\varphi =0,  \\
& \varphi \left(y_{c_n}^\ell \right) =1,  \ \partial_y \varphi \left(y_{c_n}^\ell \right) =0 \  \left (\text{or } \varphi \left(y_{c_n}^r \right) =1,  \ \partial_y \varphi \left(y_{c_n}^r \right) =0 \right),
\end{align}
on an interval $[y_1,  y_0] $ (or $(y_0, y_2],$ respectively).

We note that $y_{c_n}^i \to y_0$ as $c_n \to c,$ $i=\ell$ or $r.$ In turn, we denote by $\varphi^\ell$ (or $\varphi^r$) the solution to 
\begin{align*}
&\partial_y ((Z_+ -c)(Z_- -c)\partial_y \varphi) -\alpha^2(Z_+ -c)(Z_- -c)\varphi =0, \\
& \varphi \left(y_{0} \right) =1,  \ \partial_y \varphi \left(y_0 \right) =0,
\end{align*}
on $[y_1,  y_0]$ (or $[y_0, y_2],$ respectively). 

We shall use the following properties of $\varphi_n^i$ and $\varphi^i$, $i= \ell$ or $r,$ as proven in Appendix \ref{sturm}:
\begin{enumerate}
\item $\|\pa_y\varphi^{i}_n\|_{L^{\infty}}+\|\varphi^{i}_n\|_{L^{\infty}}  \leq C$ and $\|\pa_y\varphi^{i}\|_{L^{\infty}}+\|\varphi^{i}\|_{L^{\infty}} \leq C,$
\item $\left|\varphi_n^{i} \right| \geq \frac{1}{2}$ and $\varphi^i\geq 1$,
\item $\left|\varphi_n^{i}(y) -1 \right|\leq C\left |y-y^i_{c_n} \right|^2, \ y \in [y_1, y^\ell_{c_n}]$ (or $[y^r_{c_n}, y_2],$ respectively),\\
and $\left|\varphi^{i}(y) -1 \right| \leq C\left |y-y_0 \right|^2,$ $y \in [y_1, y_0]$ (or $[y_0, y_2],$ respectively),  
\item $\lim\limits_{n \to \infty}\varphi_n^{i}=\varphi^{i}.$
\end{enumerate}

For $y \in [y_1,   y_{c_n}^\ell]$ we can solve the inhomogeneous equation \eqref{eq:eln00} explicitly by
\begin{equation}\label{eq:phin-}
\begin{split}
\Phi_n(y) = & \frac{\varphi_n^\ell (y)}{\varphi_n^\ell (y_1)}\Phi_n(y_1) + \mu_n^{\ell} \varphi_n^\ell (y) \int_{y_1}^y \frac{1}{\left((Z_+ -c_n)(Z_- -c_n)(\varphi_n^\ell )^2\right)(y')} \mathrm{d}y'\\
& + \varphi_n^\ell (y) \int_{y_1}^{y} \frac{\int_{y_{c_n}^\ell}^{y'} \left(F_n \varphi_n^\ell \right)(y'') \mathrm{d}y'' }{ \left( (Z_+ -c_n)(Z_- -c_n) (\varphi^\ell_n)^2\right) (y')} \mathrm{d}y',
\end{split}
\end{equation}
as \eqref{eq:eln00} is equivalent to
\begin{equation}
\partial_y \left((Z_+ -c_n)(Z_- -c_n) \left(\varphi^{i}_n \right)^2 \partial_y \left(\frac{\Phi_n}{\varphi_n^{i}} \right) \right) = F_n \varphi_n^{i}, \  i = \ell \text{ or } r.
\end{equation}
Here the coefficient $\mu_n^{\ell}$ is given by
$$\mu_n^{\ell} := (Z_+(y_1)-c_n)(Z_-(y_1)-c_n)\left( (\varphi^\ell_n \Phi'_n)(y_1) - \left(\left( \varphi^\ell_n \right)'\Phi_n \right)(y_1) \right)- \int_{y_{c_n}^\ell}^{y_1} \left( F_n \varphi_n^\ell \right) (y) \mathrm{d}y.$$

We have for the last term in \eqref{eq:phin-} that 
\begin{equation}\label{eq:l22}
\begin{split}
\left|  \varphi_n^\ell (y) \int_{y_1}^y \frac{\int_{y^\ell_{c_n}}^{y'} \left(F_n \varphi_n^\ell \right)(y'') \mathrm{d}y''}{\left((Z_+ -c_n)(Z_- -c_n)(\varphi_n^\ell)^2\right)(y')} \mathrm{d}y' \right| \leq & C \left| \int_{y_1}^y \frac{1 }{\left(y' + y^\ell_{c_n} \right)} \mathrm{d}y'  \right| \\
 \leq & C \ln\left(\frac{y+y^\ell_{c_n}}{y_1+y^\ell_{c_n}} \right) \in L^2(y_1, y^\ell_{c_n}).
\end{split}
\end{equation}
For the second term on the right hand side of \eqref{eq:phin-},  we have the point-wise limit for $y\in [y_1, y_0),$ 
$$\varphi_n^\ell (y) \int_{y_1}^y \frac{1}{\left((Z_+ -c_n)(Z_- -c_n)(\varphi_n^\ell )^2\right)(y')} \mathrm{d}y' \rightarrow \varphi^\ell (y)\int_{y_1}^y \frac{1}{\left((Z_+ -c)(Z_- -c)(\varphi^\ell)^2 \right)(y')} \mathrm{d}y'.$$ The fact that $\text{Re}\,(Z_+ -c) \sim (y- y_0)^2$ implies that 
\begin{align}
\label{eq:l21} \varphi^\ell (y)\int_{y_1}^y \frac{1}{\left((Z_+ -c)(Z_- -c)(\varphi^\ell)^2 \right)(y')} \mathrm{d}y' \sim \frac{1}{(y-y_0)} \not \in L^2(y_1, y_0).
\end{align}

On the other hand, from the facts that $\| \Phi_n\|_{L^2} \leq C,$ $\| (Z_+ -c_n)(Z_- -c_n)\pa_y \Phi_n\|_{H^1} \leq C$ and $\| F_n \|_{L^\infty} < C,$ we know that $\| (Z_+ -c_n)(Z_- -c_n) \Phi_n\|_{H^1} \leq C$ and $\left\{\mu^\ell_n \right\}_{n=1}^\infty$ is bounded. Thus, we further infer that up to a subsequence $\lim\limits_{n \to \infty} \mu_n^\ell = \mu ^\ell$ and that the second term on the right hand side of \eqref{eq:phin-} converges pointwise. Similarly,  we can confirm the pointwise convergence of the third term on the right hand side of \eqref{eq:phin-}. We can verify that the weak limit $\Phi$ can be written as
\begin{equation}\label{eq:phii-}
\begin{split}
\Phi(y) := & \frac{\varphi^\ell (y)}{\varphi^\ell(y_1)}\Phi(y_1) + \mu^{\ell} \varphi^\ell (y) \int_{y_1}^y \frac{1}{\left((Z_+ -c)(Z_- -c)(\varphi^\ell)^2\right)(y')} \mathrm{d}y'\\
& + \varphi^\ell (y) \int_{y_1}^{y} \frac{\int_{y_{0}}^{y'} \left(F \varphi^\ell \right)(y'') \mathrm{d}y'' }{ \left( (Z_+ -c)(Z_- -c) (\varphi^\ell)^2\right) (y')} \mathrm{d}y',\quad y<y_0.
\end{split}
\end{equation}

Moreover,  the assumption $\Phi_n \rightharpoonup \Phi$ in $L^2$ along with \eqref{eq:l22} indicates that the second term on the right hand side of \eqref{eq:phii-} is uniformly bounded in $L^2.$ According to \eqref{eq:l21},  it is therefore necessary that 
$$\mu^\ell =0, \text{ i.e.,  } \mu_{n}^\ell \rightarrow 0 \text{ up to a subsequence}.$$ 

Similarly,  for $y \in [y_{c_n}^r,  y_2]$ we have
\begin{equation}\notag
\begin{split}
\Phi_n(y) = & \frac{\varphi_n^r(y)}{\varphi_n^r(y_2)}\Phi_n(y_2) + \mu_n^{r} \varphi_n^r(y) \int_{y_2}^y \frac{1}{\left((Z_+ -c_n)(Z_- -c_n)(\varphi_n^r)^2\right)(y')} \mathrm{d}y'\\
& + \varphi_n^r (y) \int_{y_2}^{y} \frac{\int_{y_{c_n}^r}^{y'} \left(F_n \varphi_n^r \right)(y'') \mathrm{d}y'' }{ \left( (Z_+ -c_n)(Z_- -c_n) (\varphi^r_n)^2\right) (y')} \mathrm{d}y',
\end{split}
\end{equation}
with 
$$\mu_n^{r}:= (Z_+(y_2)-c_n)(Z_-(y_2)-c_n)\left( (\varphi^r_n \Phi'_n)(y_2) - \left(\left( \varphi^r_n \right)'\Phi_n \right)(y_2) \right)- \int_{y_{c_n}^r}^{y_2} \left( F_n \varphi_n^r \right) (y) \mathrm{d}y,$$
and we can show that $\mu_n^r \rightarrow 0$ up to a subsequence. 

Thus, from the formula \eqref{eq:phii-} and its analogue on $[y^r_{c_n}, y_2],$ where $\mu^i \equiv 0$ for $i = \ell$ or $r,$ we know that 
\begin{align*}
|(y-y_0) \partial_y \Phi| \lesssim & |\Phi(y_1)|\|\pa_y\varphi^i(y)\|_{L^{\infty}}+\|F\|_{L^{\infty}}\|\pa_y\varphi^i(y)\|_{L^{\infty}}|y-y_0|(\ln |y-y_0|+1)\\
& +\|F\|_{L^{\infty}} \|\varphi^i(y)\|_{L^{\infty}}.
\end{align*}

As $\Phi_n \rightharpoonup \Phi$ in $L^2$ and $(Z_+ -c_n)(Z_- -c_n)\pa_y \Phi_n \rightharpoonup (Z_+ -c)(Z_- -c)\pa_y \Phi$ in $H^1,$ multiplying Equation \eqref{eq:eln00} by $ f \in H^1_0(\mathcal I_0)$ and passing to the limit as $n \to \infty,$ we have
\begin{align*}
-\int_{\mathcal{I}_0} (Z_+ -c)(Z_- -c)\left( \pa_y\Phi f' + \al^2\Phi f \right) \,\mathrm{d}y=\int_{\mathcal I_0} F f\,\mathrm{d} y,  \ \forall  f \in H^1_0(\mathcal I_0).
\end{align*}
The desired result \eqref{eq:55} then follows from a density argument.
\end{proof}

In view of Lemma \ref{lem: m},  Lemma \ref{lem: i} and Lemma \ref{lem: r},  the use of $\text{sgn}(Z_+ -c)(Z_- -c) \Phi$ as a test function and integration by parts can be justified. As $F_n \to 0$ in $H^1$ as $n \to \infty,$ it is not difficult to see that 
$$\Phi_n \rightharpoonup  0 \text{ in } L^2(\mathbb{T}) \text{ and }
(Z_+ -c_n)(Z_- -c_n) \pa_y \Phi_n \rightharpoonup 0 \text{ in } H^1(\mathbb{T}) .$$
We shall rigorously prove this claim in Section \ref{proof51}.  To complete the proof of Proposition \ref{prop: lap},  it remains to be shown that $\{ \Phi_n \}_{n=1}^\infty$ and $\{ (Z_+ -c_n) (Z_- -c_n)\pa_y\Phi_n \}_{n=1}^\infty$ converges strongly in $L^2$ and $H^1,$ respectively, throughout $\mathbb T$, which amounts to proving the strong convergence of $\{\Phi_n \}_{n=1}^\infty$ and $\{ (Z_+ -c_n) (Z_- -c_n)\pa_y\Phi_n \}_{n=1}^\infty$ near the critical points in $(Z_+)^{-1}(c) \cup(Z_-)^{-1}(c) $. 
Indeed, away from $(Z_+)^{-1}(c) \cup(Z_-)^{-1}(c),$ it is clear that
\begin{gather}\label{eq: strongto0}
\Phi_n \to  0 \text{ in }  H^1_{loc}\left(\mathbb{T}\setminus ((Z_+)^{-1}(c) \cup(Z_-)^{-1}(c)) \right),\\
(Z_+ -c_n)(Z_- -c_n) \pa_y \Phi_n \to 0 \text{ in } H^1_{loc} \left(\mathbb{T}\setminus ((Z_+)^{-1}(c) \cup(Z_-)^{-1}(c)) \right),
\end{gather}
and the case of points in $(Z_\pm)^{-1}(c)$ at which $|Z_\pm' | >0$ is already covered in Lemma \ref{lem: m}.

\subsection{Strong convergence of $\{\Phi_n \}_{n=1}^\infty$ near $y_0$}

As for strong convergence of $\{ \Phi_n \}_{n=1}^\infty$ and $\{ (Z_+ -c_n) (Z_- -c_n)\pa_y\Phi_n \}_{n=1}^\infty$ when $c_n\to c=Z_s(y_0)$ with $Z_s '(y_0)=0,$ $s=+$ or $-,$ we note that for the scenarios in Lemma \ref{lem: i}, by \eqref{eq:Phi_n-L^2} and \eqref{eq: strongto0}, it is not difficult to show that as $F_n\to 0$ in $H^1$, $\Phi_n \to 0$ and $(Z_+ -c_n)(Z_- -c_n) \pa_y \Phi_n \rightarrow 0$ in $L^2(\mathcal I_0)$ and $H^1(\mathcal I_0),$  respectively via integration by parts, while a more delicate analysis is needed for the scenario in Lemma \ref{lem: r}, i.e.,  $Z_s''(y_0) \left( \text{Re}\,c_n -c \right) \geq 0,$ $s=+$ or $-,$ and $|\text{Re}\, c_n-c |\geq |\text{Im}\, c_n|.$ We shall postpone the proof for the scenarios in Lemma \ref{lem: i} to Section \ref{proof51} and focus primarily on that for the scenario in Lemma \ref{lem: r}. 

We shall prove the following lemma.
\begin{lemma}\label{stgcv} 
Let $y_0 \in \left( (Z_+)^{-1}(c) \cup (Z_-)^{-1}(c) \right)$ be a critical point, i.e., $Z_s'(y_0) =0$, and the interval $\mathcal I_0: =[y_1, y_2]$ be such that $y_0 \in \mathcal I_0$ and $ Z_s''(y_0)Z_s''(y)>0,$ $s=+$ or $-.$ Let $\{ (c_n, \Phi_n, F_n  ) \}_{n=1}^\infty$ and $\Phi$ satisfy the conditions as in Lemma \ref{lem: r}, then $\Phi_n \rightarrow \Phi$ in $L^2$ in $\mathcal I_0.$ In addition, the following estimates hold
\begin{eqnarray}
& \label{f0} |\Phi_n(y_0)| \leq C|(Z_-(y_0)-c_n)(Z_+(y_0)-c_n)|^{-\frac{1}{4}},\\
& \label{pf0} |\pa_y \Phi_n(y_0) | \leq  C |(Z_-(y_0)-c_n)(Z_+(y_0) -c_n)|^{-\frac{3}{4}}.
\end{eqnarray}
\end{lemma}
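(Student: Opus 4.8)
The plan is to prove Lemma \ref{stgcv} by upgrading the pointwise convergence obtained in the proof of Lemma \ref{lem: r} to strong $L^2$ convergence on $\mathcal I_0$, and in the process extracting the uniform bounds \eqref{f0} and \eqref{pf0}. The starting point is the explicit representation formula \eqref{eq:phin-} for $\Phi_n$ on $[y_1, y^\ell_{c_n}]$ (and its analogue on $[y^r_{c_n}, y_2]$), together with the already-established facts that $\mu^\ell_n \to 0$ and $\mu^r_n \to 0$ up to a subsequence. The idea is that, once the dangerous coefficient $\mu^i_n$ is known to be small, the representation formula exhibits $\Phi_n$ as a sum of terms each of which converges in $L^2$: the first term is $\frac{\varphi^\ell_n}{\varphi^\ell_n(y_1)}\Phi_n(y_1)$, which converges since $\varphi^\ell_n \to \varphi^\ell$ in $C^1$ and $\Phi_n(y_1) \to \Phi(y_1)$ (boundary values away from the critical point are controlled by the $H^1$ bound on $q_n$); the $\mu^\ell_n$-term is $o(1)$ times an $L^2$ function by \eqref{eq:l22}-type reasoning; and the last ($F_n$-driven) term converges because $F_n \to 0$ in $H^1$ hence the relevant logarithmic integrand tends to $0$ in $L^2$. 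I would also need to treat the central strip $[y^\ell_{c_n}, y^r_{c_n}]$, whose width is $O(\delta_n) = O(\sqrt{|c-c_n|}) \to 0$, so its contribution to $\|\Phi_n\|^2_{L^2}$ is negligible once $\|\Phi_n\|_{L^\infty(\mathcal I_0)}$ is controlled — which is exactly what \eqref{f0} will give.

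Concretely, the key steps in order are: (i) reprove/recall from Lemma \ref{lem: r} that $\varphi^i_n \to \varphi^i$ in $C^1_{loc}$, that $\mu^i_n \to 0$, and that $\Phi_n(y_j) \to \Phi(y_j)$ at the endpoints $j = 1,2$; (ii) use the Wronskian identity $(Z_+ -c_n)(Z_- -c_n)\big(\varphi^\ell_n \Phi'_n - (\varphi^\ell_n)'\Phi_n\big)$ = (something controlled by $\mu^\ell_n$ plus an $F_n$-integral) to get, at the \emph{critical point} $y_0$, a bound on $\Phi_n(y_0)$ and $\partial_y\Phi_n(y_0)$: evaluating the quantity $\varphi^\ell_n(y)^2 (Z_+ -c_n)(Z_- -c_n)\partial_y(\Phi_n/\varphi^\ell_n)$ and integrating from $y_1$ (or matching the left and right solutions at $y_0$) produces $\partial_y\Phi_n(y_0) = \big(\mu^\ell_n + \int_{y^\ell_{c_n}}^{y_0}F_n\varphi^\ell_n\big) / \big((Z_+(y_0)-c_n)(Z_-(y_0)-c_n)\varphi^\ell_n(y_0)^2\big)$ plus lower-order; since $|(Z_+(y_0)-c_n)(Z_-(y_0)-c_n)| \sim |\mathrm{Re}\,c_n - c| + |\mathrm{Im}\,c_n| \gtrsim \delta_n^2$ in this regime and the numerator is bounded, one gets $|\partial_y\Phi_n(y_0)| \lesssim \delta_n^{-2}$ — too weak, so one must sharpen using that $\mu^\ell_n \to 0$ and $\mu^\ell_n = \mu^\ell_n - \mu^r_n$ involves a \emph{cancellation} between the left and right Wronskians (a jump relation at $y_0$), which is where the $\tfrac14$-gain comes from, yielding \eqref{pf0}; then \eqref{f0} follows from \eqref{pf0} by Gagliardo–Nirenberg exactly as in Lemma \ref{lem: i}, or directly from integrating $\partial_y\Phi_n$ over $[\widetilde y, y_0]$ with $\widetilde y$ chosen as in that lemma; (iii) with $\|\Phi_n\|_{L^\infty}$ bounded uniformly, split $\|\Phi_n\|^2_{L^2(\mathcal I_0)}$ into the strip $[y^\ell_{c_n}, y^r_{c_n}]$ (measure $O(\delta_n)$, contributes $O(\delta_n)$) and the two outer intervals, on which the representation formula plus dominated convergence gives strong $L^2$ convergence to the corresponding limit formula \eqref{eq:phii-}.

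The main obstacle is step (ii): obtaining the sharp exponent $\tfrac34$ rather than the naive $1$ (or worse). The naive estimate treats $\mu^\ell_n$ as merely bounded and divides by $|(Z_+(y_0)-c_n)(Z_-(y_0)-c_n)| \sim \delta_n^2$, giving $|\partial_y\Phi_n(y_0)| \lesssim \delta_n^{-2} = |(Z_+(y_0)-c_n)(Z_-(y_0)-c_n)|^{-1}$. To do better one has to exploit: first, that $\mu^\ell_n \to 0$ (so it is $o(1)$, not $O(1)$), and more importantly the structure of the matching at $y_0$ — $\Phi_n$ and $\partial_y\Phi_n$ must agree when computed from the left solution $\varphi^\ell_n$ and the right solution $\varphi^r_n$, which forces a relation between $\mu^\ell_n$ and $\mu^r_n$; quantifying the size of $\mu^\ell_n$ in terms of $\delta_n$ from this matching, combined with the pointwise estimate $|\varphi^i_n(y)-1| \lesssim |y-y^i_{c_n}|^2$, is what produces the extra $\delta_n^{1/2}$ (equivalently the $\tfrac14$-improvement advertised in the remark on the resolvent estimate). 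I expect this bookkeeping — carefully tracking how $\mu^\ell_n$, the $F_n$-integrals over $[y^\ell_{c_n}, y_0]$, and the denominators scale in $\delta_n$ near the degenerate critical point — to be the technical heart of the proof, with the remaining $L^2$-convergence assertions following by routine dominated-convergence arguments once these bounds are in hand.
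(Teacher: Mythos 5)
Your overall architecture matches the paper's: reduce to the explicit ODE representation built from the homogeneous solutions $\varphi^{\ell},\varphi^{r}$, use the $C^1$ matching at the critical point to control the coefficient multiplying the singular particular solution, and then read off the $L^2$ convergence and the pointwise bounds. (The paper first subtracts the linear interpolant of the boundary values so as to work with a homogeneous Dirichlet problem \eqref{eq:blan}, which cleans up the matching system, but that is cosmetic.) However, you have left the decisive step unexecuted, and the mechanism you gesture at for it is not the one that actually works. The $\tfrac14$-gain does \emph{not} come from a ``cancellation between the left and right Wronskians,'' nor from the qualitative fact that $\mu_n^{\ell}\to 0$, nor from $|\varphi_n^i-1|\lesssim |y-y^i_{c_n}|^2$ alone. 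It comes from a quantitative \emph{lower} bound on the singular integrals $I^{\ell},I^{r}$ defined in \eqref{Ipm}: writing $\sigma(c_*)^2=c_*-Z_+(0)$ and substituting $V(y)^2=Z_+(y)-Z_+(0)$, one computes (Lemma \ref{38}) that $2\sigma(c_*)I^{k}(c_*)\to -\tfrac{i\pi}{b(0)\sqrt{2Z_+''(0)}}\neq 0$, so $|I^{k}|\sim|\sigma|^{-1}$ and the determinant of the $4\times 4$ matching system satisfies $|\cD(c_*)|\sim|\sigma|^{-1}$ (Corollary \ref{311}). Only then does Cramer's rule give the quantitative rate $|\mu^{j}|\lesssim |\sigma(c_*)|\bigl(1+|\ln|y_*^r||\bigr)\|F_*\|_{L^\infty}$, which is exactly what turns the naive exponent $-1$ into $-\tfrac34$ in \eqref{pf0}. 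Your proposal defers precisely this computation (``I expect this bookkeeping \dots to be the technical heart''), so the heart of the proof is missing; the non-vanishing of the limit of $\sigma I^{k}$, which requires evaluating a principal-value/complex-logarithm integral across the degenerate zero of $Z_+-c$, cannot be obtained by soft convergence arguments.

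Two further points. First, your route to \eqref{f0} via Gagliardo--Nirenberg ``as in Lemma \ref{lem: i}'' is not available here: that argument used $\|\sqrt{|\mathrm{Im}\,c_n|}\,\partial_y\Phi_n\|_{L^2}\leq C$, which relied on $|\mathrm{Im}\,c_n|\geq|\mathrm{Re}\,c_n-c|$, whereas in the regime of Lemma \ref{lem: r} the opposite inequality holds. The paper instead reads \eqref{f0} directly from $|\Phi_n^*(y_0)|\leq|\nu^{r}[F_n^*](c_n)|\lesssim\bigl(1+|\ln|y^r_*||\bigr)\|F_n^*\|_{L^\infty}$, which is only logarithmically large and hence dominated by any negative power. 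Second, for the strong $L^2$ convergence you would also need the uniform $L^2$ bound of Lemma \ref{lem: phi*} (i.e.\ $\|\Phi_*\|_{L^2}\leq C\|F_*\|_{L^\infty}$, again a consequence of the coefficient bounds \eqref{mue}--\eqref{nue}); the ``dominated convergence on the outer intervals plus a thin central strip'' argument you sketch needs this to control the $J_2,\widetilde J_1$ terms, whose integrands are of size $|y-y_*^r|^{-1-2\gamma}$ and are only summable in $L^2$ after the factor $|\sigma|^{\gamma}$ from $\mu^{j}$ has been extracted as in \eqref{esj2}.
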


To prove Lemma \ref{stgcv}, we shall construct solutions to Equation \eqref{eq:eln00} in $\mathcal I_0$ and obtain its explicit formula using ODE techniques. 

Without loss of generality, we may assume that the critical point $y_0 =0.$ Let $\mathcal I_0:= [y_1, y_2]$ be such that $Z_+ (y_1)=Z_+(y_2)$ (or $Z_- (y_1)=Z_-(y_2)$) with $y_1 < 0 < y_2.$ We shall rewrite Equation \eqref{eq:eln00} by introducing 
\begin{equation}\notag
 \Phi_n^* (y):= \Phi_n(y) -\mathcal L_{\Phi_n}(y):= \Phi_n(y) - \frac{\Phi_n(y_2) -\Phi_n(y_1)}{(y_2 -y_1)}(y-y_1) - \Phi_n(y_1),
\end{equation}
which leads to
\begin{equation}\label{eq:2}
\begin{cases}
\partial_y\left( (Z_+ -c_n)(Z_- -c_n)\partial_y \Phi^*_n \right) -\alpha^2 (Z_+ - c_n)(Z_- -c_n) \Phi^*_n = F^*_n,\\
\Phi^*_n (y_1) = \Phi^*_n(y_2) =0,
\end{cases}
\end{equation}
with
\begin{equation}\notag
\begin{split}
F^*_n(y) = & F_n(y) -  \frac{\Phi_n(y_2) -\Phi_n(y_1)}{(y_2 -y_1)} (Z_+')(y)(Z_-(y) -c_n)\\
& - \frac{\Phi_n(y_2) -\Phi_n(y_1)}{(y_2 -y_1)} (Z_-')(y)(Z_+(y) -c_n) + \alpha^2 (Z_+ -c_n)(Z_- -c_n) \mathcal L_{\Phi_n} (y).
\end{split}
\end{equation}

We can see that to study Equation \eqref{eq:2} is to study an equation of the following type
\begin{equation}\label{eq:blan}
\begin{cases}
\partial_y\left( (Z_+-c_* )(Z_--c_* ) \partial_y\Phi_* \right) -\alpha^2(Z_+-c_* )(Z_--c_* ) \Phi_* = F_*,\\
\Phi_* (y_1) = \Phi_* (y_2) =0.
\end{cases}
\end{equation}
Here $c_* \in \left( \Omega_{\ep_0} \setminus \left( \text{Ran}\, Z_+ \cup \text{Ran}\, Z_- \right)\right)$ satisfies the conditions $\left| \text{Re}\,c_* -Z_+(0) \right| \geq \left| \text{Im} \,c_* \right|$ and $Z_+''(0)(\text{Re}\, c_* - Z_+(0) ) \geq 0$ (or $\left| \text{Re}\,c_* -Z_-(0) \right| \geq \left| \text{Im} \,c_* \right|$ and $Z_-''(0)(\text{Re}\, c_* - Z_-(0) ) \geq 0$), while $F_* \in H^1.$ For $c^*$ close enough to $Z_+ (0)$ (or $Z_-(0)$), we can find exactly two points $y^i_{*},$ $i=\ell$ or $r,$ in a sufficiently small neighborhood of $0$ such that $y_{*}^\ell \leq 0 \leq y^r_{*}$ and $Z_+ \left(y_{*}^\ell \right) =Z_+(y_{*}^r) = \text{Re}\,c^*$ (or $Z_- \left(y_{*}^\ell \right) =Z_+(y_{*}^r) = \text{Re}\,c_*$). It turns out that the solution to Equation \eqref{eq:blan} enjoys the following estimate.
\begin{lemma}\label{lem: phi*} 
Let $\Phi_*$ be the solution to Equation \eqref{eq:blan}. There exists some $C >0$ independent of $c_*$ such that 
\begin{equation}\notag
\|\Phi_*\|_{L^2(y_1,y_2)} \leq C \|F_*\|_{L^\infty}. 
\end{equation}
\end{lemma}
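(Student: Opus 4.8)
\textbf{Proof proposal for Lemma \ref{lem: phi*}.}

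The plan is to argue by contradiction, just as in the proof of Proposition \ref{prop: lap}, but now localized near the critical point $y_0=0$ and keeping careful track of the quantitative dependence on $c_*$. Suppose the bound fails: then there is a sequence $\{c_n, \Phi_n, F_n\}$ with $c_n$ in the admissible region ($|\operatorname{Re} c_n - Z_+(0)| \geq |\operatorname{Im} c_n|$, $Z_+''(0)(\operatorname{Re} c_n - Z_+(0)) \geq 0$, say), solving \eqref{eq:blan} with homogeneous Dirichlet conditions, normalized by $\|\Phi_n\|_{L^2(y_1,y_2)} = 1$ while $\|F_n\|_{L^\infty} \to 0$. The points $y_{c_n}^\ell \leq 0 \leq y_{c_n}^r$ with $Z_+(y_{c_n}^{\ell,r}) = \operatorname{Re} c_n$ converge to $0$. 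As before, one extracts weak limits $\Phi_n \rightharpoonup \Phi$ in $L^2$ and (after checking the $H^1$ bound on $q_n = (Z_+-c_n)(Z_--c_n)\partial_y\Phi_n$, which here follows from the equation plus the normalization and $F_n\to 0$) $q_n \rightharpoonup q = (Z_+-c)(Z_--c)\partial_y\Phi$. The goal is to show that the weak limit $\Phi$ vanishes and then upgrade to strong convergence, contradicting $\|\Phi_n\|_{L^2}=1$.

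The first main step is to show $\Phi \equiv 0$. I would use the explicit representation formula from the proof of Lemma \ref{lem: r}: the solution on $[y_1, y_{c_n}^\ell]$ is written via the homogeneous solution $\varphi_n^\ell$ normalized at $y_{c_n}^\ell$, with a coefficient $\mu_n^\ell$ in front of the non-$L^2$ piece $\varphi_n^\ell(y)\int_{y_1}^y \bigl((Z_+-c_n)(Z_--c_n)(\varphi_n^\ell)^2\bigr)^{-1}$. Since $\operatorname{Re}(Z_+-c)\sim (y-y_0)^2$, that integral behaves like $(y-y_0)^{-1}\notin L^2$, so the $L^2$-boundedness of $\Phi_n$ (here even $\|\Phi_n\|_{L^2}=1$) forces $\mu_n^\ell \to 0$, and likewise $\mu_n^r\to 0$ on $[y_{c_n}^r, y_2]$. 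Passing to the limit in the representation then shows $\Phi$ is given by the $F$-dependent term plus the boundary term $\frac{\varphi^\ell(y)}{\varphi^\ell(y_1)}\Phi(y_1)$; because $F_n\to 0$ and $\Phi_n(y_1)=0$ (note $y_1$ is a fixed interior point of the monotone region where things are smooth, so $\Phi_n(y_1)\to\Phi(y_1)=0$), we get $\Phi\equiv 0$ on both sides of $0$, hence on $\mathcal I_0$.

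The second step is strong convergence. Away from $0$ this is routine from the equation (the coefficient $(Z_+-c_n)(Z_--c_n)$ is bounded below), so the real work is near $0$, and this is where Lemma \ref{stgcv} (whose proof the excerpt is building toward) is invoked: its conclusion gives $\Phi_n \to 0$ in $L^2(\mathcal I_0)$ together with the pointwise bounds \eqref{f0}--\eqref{pf0}. Combining the local strong convergence near $0$ with the strong convergence away from $0$ yields $\|\Phi_n\|_{L^2(y_1,y_2)}\to 0$, contradicting the normalization. \textbf{The main obstacle} is the second step, and more precisely establishing the uniform-in-$c_n$ control near the critical point that underlies Lemma \ref{stgcv}: the coefficient $(Z_+-c_n)(Z_--c_n)$ degenerates quadratically there, the two turning points $y_{c_n}^{\ell,r}$ collide, and one must extract from the ODE structure the nontrivial $\tfrac14$- and $\tfrac34$-power gains in \eqref{f0}--\eqref{pf0}; this requires the Gagliardo--Nirenberg argument together with the $\sqrt{|\operatorname{Im} c_n|}\,\partial_y\Phi_n$ bound and the integration of \eqref{eq:eln00} across the critical point, exactly as in the proof of Lemma \ref{lem: i}, but now adapted to the regime $|\operatorname{Re} c_n - c|\geq |\operatorname{Im} c_n|$ via the auxiliary homogeneous solutions $\varphi_n^{\ell,r}$. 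Once Lemma \ref{stgcv} is in hand, closing the contradiction is immediate.
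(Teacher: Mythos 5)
Your proposal is circular and does not engage with the actual content of the lemma. In the paper's logical architecture, Lemma \ref{lem: phi*} is the \emph{quantitative input} that makes the compactness/contradiction argument work near a critical point in the regime $|\mathrm{Re}\,c_*-Z_+(0)|\geq|\mathrm{Im}\,c_*|$: the proof of Proposition \ref{prop: lap} passes to $F^*_n$ with $\|F^*_n\|_{L^\infty}\to 0$ and then cites Lemma \ref{lem: phi*} to conclude $\Phi_n\to 0$ in $L^2(\mathcal I_0)$, and the proof of Lemma \ref{stgcv} itself rests on the coefficient bounds \eqref{mue}--\eqref{nue} that are established \emph{inside} the proof of Lemma \ref{lem: phi*}. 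So invoking Lemma \ref{stgcv} to close your contradiction argument for Lemma \ref{lem: phi*} assumes what is to be proved. Moreover, the tools you name for the ``main obstacle'' --- the Gagliardo--Nirenberg interpolation and the $\sqrt{|\mathrm{Im}\,c_n|}\,\partial_y\Phi_n$ energy bound --- belong to the complementary regime $|\mathrm{Re}\,c_n-c|\leq|\mathrm{Im}\,c_n|$ of Lemma \ref{lem: i}; in the regime of Lemma \ref{lem: phi*} the imaginary part may be negligible compared with $|\mathrm{Re}\,c_*-Z_+(0)|$, so that bound gives nothing, and the ``adaptation via the auxiliary homogeneous solutions'' you defer to is precisely the missing proof.

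The paper's argument is direct and entirely quantitative, with no compactness: one writes $\Phi_*$ on each side of the critical point through the explicit variation-of-constants formula \eqref{eq: Theta_0+}, bounds $|L[F_*]|$, $|T^{\ell}[F_*]|$, $|T^{r}[F_*]|$ by $\|F_*\|_{L^\infty}$ times $\bigl(1+|\ln|y_*^r||\bigr)$, and then uses Lemma \ref{38} and Corollary \ref{311} (the two-sided bounds $|2\sigma(c_*)I^k(c_*)|\sim 1$ and $|\cD(c_*)|\sim|\sigma(c_*)|^{-1}$) to deduce $|\mu^j|,|\wt\mu^j|\lesssim|\sigma(c_*)|\bigl(1+|\ln|y_*^r||\bigr)\|F_*\|_{L^\infty}$ and $|\nu^j|\lesssim\bigl(1+|\ln|y_*^r||\bigr)\|F_*\|_{L^\infty}$. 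The delicate point is then the $L^2$ bound on the singular pieces $J_2$ and $\wt J_1$: absorbing the logarithm as $|\sigma(c_*)|^{-\gamma}$ with $0<\gamma<\tfrac14$ and estimating $\int_0^y|y'-y_*^r|^{-1-2\gamma}\,\mathrm{d}y'$ produces $|\sigma(c_*)|^{\gamma}|y-y_*^r|^{-2\gamma}$, which is in $L^2$ uniformly in $c_*$ (estimate \eqref{esj2}). None of this appears in your proposal, and the soft weak-limit step you do carry out (forcing $\mu^{\ell}_n,\mu^r_n\to 0$ from $L^2$-boundedness) only identifies the limit; it cannot produce the uniform constant $C$ independent of $c_*$ that the lemma asserts. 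If you want a non-circular proof you must either reproduce the coefficient analysis above or find an independent uniform estimate near the colliding turning points; as written, the argument has a genuine gap.
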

 
Before we prove Lemma \ref{lem: phi*}, we make some preparations. Recall that here we assume that $0$ is a critical point of $Z_+$, i.e. $Z_+'(0)=0$ (or $Z_-'(0)=0$) and that we restrict ourselves to the case $c_* \to Z_+(0)$ with $\text{Im}\, c_* >0$, as the proof for the other cases  are along the same lines. 

Without loss of generality, we may also assume that $Z_+''(0) > 0,$ i.e., $Z_+(0)$ is a local minimum. We define $\sigma(c_*) \in \mathbb C,$ which satisfies $\left( \sigma(c_*) \right)^2=c_*-Z_+(0)$ with $\text{Im}\,\sigma(c_*)>0.$ 

To this end, we proceed to introduce the notations. 
We define the function $V(y)$ such that $\left(V(y) \right)^2 = Z_+(y)-Z_+(0),$ i.e.,
\begin{equation}
V(y) = \begin{cases} - \sqrt{Z_+(y)-Z_+(0)} \text{ on } [y_1, 0], \\
\sqrt{Z_+(y) - Z_+(0)} \text{ on } [0, y_2].
\end{cases}
\end{equation}
It can be verified via direct computations that $V \in C^2(\mathcal I_0)$ is monotone, and $V'(0)=\f{\sqrt{2Z''_+(0)}}{2}$.

Denoting the solution to Equation \eqref{eq:blan} on $[y_1, 0]$ by $\Phi_*^\ell$ and that on $[0, y_2]$ by $\Phi_*^r,$ we notice that Equation \eqref{eq:blan} is equivalent to
\begin{equation}
\partial_y\left((Z_--c_*)(Z_+-c_*)\left(\varphi_*^{j} \right)^2 \partial_y\left( \frac{\Phi^{j}_*}{\varphi_*^{j}} \right)  \right) = F_* \varphi_*^{j}, \ j= \ell \text{ or }r,
\end{equation}
where $\varphi_*^r$ and $\varphi_*^\ell$ are the solutions to the homogeneous equation
\begin{eqnarray}
& \partial_y\left((Z_--c_*)(Z_+-c_*)\partial_y\varphi_* \right) -\alpha^2 (Z_--c_*)(Z_+-c_*)\varphi_* = 0,\\
\label{b1} &  \varphi_* \left(y_{*}^\ell \right) =1,  \ \partial_y\varphi_* \left(y_{*}^\ell \right) =0, \ \text{for }y \in [y_1,  0], \\
\label{b2} & \text{or }\varphi_* \left(y_{*}^r \right) =1,  \ \partial_y\varphi_* \left(y_{*}^r \right) =0,  \ \text{for }y \in [0, y_2],
\end{eqnarray}
with $\varphi_*^r$ and $\varphi_*^\ell$ corresponding to condition \eqref{b1} and corresponding to \eqref{b2}, respectively. The following properties of $\varphi_*^{j},$ $j=\ell$ or $r,$ shall be useful -- 
\begin{enumerate}
\item $|\varphi_*^{j}| > \frac{1}{2},$
\item $|\varphi_*^{j}(y) -1 | \leq C \left|y - y_{*}^j \right|^2, \ y \in [y_1, 0]$ (or $[0, y_2]$),
\item $|\partial_y \varphi_*^{j}(y)| \leq C \left |y-y_{*}^j \right|,  \ y \in [y_1, 0]$ (or $[0, y_2]$).
\end{enumerate}

We can integrate twice and obtain explicit solution formulae to Equation \eqref{eq:blan}. On $[0, y_2],$ the solution to Equation \eqref{eq:blan} is given by
\beq\label{eq: Theta_0+}
\begin{split}
\Phi_*^r(y)= & \nu^r[F_*](c_*) \varphi_*^r(y) + {\mu}^{r}[F_*](c_*)  \varphi_*^r(y) \int_0^y \frac{1}{(Z_-(y')-c_*)(Z_+(y')-c_*)\left( \varphi_*^r(y') \right)^2} \mathrm{d}y'\\
&+ \varphi_*^r(y) \int_0^y \frac{\int_{y_{*}^r}^{y'}(F_* \varphi_*^r)(z)\mathrm{d}z}
{ (Z_-(y')-c_*)(Z_+(y')-c_*) \left(\varphi_*^r(y') \right)^2} \mathrm{d}y' \\
= & \wt \mu^{r}[F_*](c_*) \varphi_*^r(y) \int_{y_2}^y \frac{1}{(Z_-(y')-c_*)(Z_+(y')-c_*)\left( \varphi_*^r(y')\right)^2} \mathrm{d}y'\\
& + \varphi_*^r(y) \int_{y_2}^y\frac{\int_{y_{*}^r}^{y'} (F_* \varphi_*^r)(z) \mathrm{d}z}{ (Z_-(y')-c_*)(Z_+(y')-c_*) \left( \varphi_*^r(y') \right)^2}\mathrm{d}y' ,
\end{split}
\eeq
while the solution to Equation \eqref{eq:blan} on $[y_1, 0]$ is given by
\begin{align*}
\Phi_*^\ell (y)= & \varphi_*^\ell (y) \int_{y_1}^y\frac{\int_{y_{*}^\ell}^{y'}(F_* \varphi_*^\ell)(z) \mathrm{d}z }{(Z_-(y')-c_*)(Z_+(y')-c_*)\left( \varphi_*^\ell(y') \right)^2}\mathrm{d}y'\\
&+\wt\mu^\ell [F_*](c_*) \varphi_*^\ell(y) \int_{y_1}^y\frac{1}{(Z_-(y')-c_*)(Z_+(y')-c_*) \left( \varphi_*^\ell (y') \right)^2} \mathrm{d}y'\\
= &  {\mu}^\ell [F_*](c_*) \varphi_*^\ell(y)\int_0^y\frac{1}{(Z_-(y')-c_*)(Z_+(y')-c_*)\left( \varphi_*^\ell (y') \right)^2}\mathrm{d}y'
+\nu^\ell [F_*](c_*)\varphi_*^\ell (y)\\
& + \varphi_*^\ell (y) \int_0^y \frac{\int_{y_{*}^\ell}^{y'}(F_* \varphi_*^\ell )(z) \mathrm{d}z}
{(Z_-(y')-c_*)(Z_+(y')-c_*) \left( \varphi_*^\ell (y') \right)^2}\mathrm{d}y'.
\end{align*}
Here the coefficients $\mu^j [F_*](c_*), \wt \mu^j [F_*](c_*)$ and $\nu^j [F_*](c_*),$ $j=\ell$ or $r,$ are determined by $F_*$ and $c_*$ in a way such that $\Phi_*^r$ and $\Phi_*^\ell$ are well-defined and satisfy the conditions 
\begin{equation}\label{bdcnd}
\begin{cases}
\Phi_*^\ell (y_1) =0,  \ \Phi_*^r(y_2) =0,  \\
\Phi_*^\ell (0) = \Phi_*^r(0),  \ \partial_y \Phi_*^\ell (0) = \pa_y \Phi_*^r(0),
\end{cases}
\end{equation}
which gives us
\beqno
 \left\{\begin{array}{l}
\mu^r[F_*](c_*)=\wt{\mu}^r[F_*](c_*), \ \
 \mu^{\ell}[F_*](c_*)=\wt{\mu}^{\ell}[F_*](c_*),\\
I^r(c_*)\mu^r[F_*](c_*)+\nu^r[F_*](c_*)=-T^r[F_*](c_*),\\
I^{\ell}(c_*)\mu^{\ell}[F_*](c_*)-\nu^{\ell}[F_*](c_*)=T^{\ell}[F_*](c_*),\\
\va_*^r(0)\nu^r[F_*](c_*)-\va_*^{\ell}(0)\nu^{\ell}[F_*](c_*)=0,\\
\va_*^{\ell}(0)\mu^r[F_*](c_*)-\va_*^r(0)\mu^{\ell}[F_*](c_*)
+(Z_-(0)-c_*)(Z_+(0)-c_*)(\va_*^{\ell}\va_*^r\pa_y\va_*^r)(0)\nu^r[F_*](c_*)\\
\ \  -(Z_-(0)-c_*)(Z_+(0)-c_*)(\va^r_*\va_*^{\ell}\pa_y\va_*^{\ell})(0)\nu^{\ell}[F_*](c_*)=L[F_*](c_*).
\end{array}\right.
\eeqno
We rewrite the above set of equations in the form of matrix equation as
\begin{gather*}
\begin{bmatrix}
 I^r& 0 & 1&  0\\
 0&  I^{\ell}& 0 &  -1\\
 0& 0&  \va_*^r(0)&  -\va_*^{\ell}(0)\\
 \va_*^{\ell}(0)&  -\va^r_*(0)& \cH_0(c_*)(\va_*^{\ell}\va_*^r\pa_y\va_*^r)(0)
 & -\cH_0(c_*)(\va^r_*\va_*^{\ell}\pa_y\va_*^{\ell})(0)
 \end{bmatrix}
 \begin{bmatrix}
 \mu^r[F_*]\\
 \mu^{\ell}[F_*]\\
 \nu^r[F_*]\\
 \nu^{\ell}[F_*]
  \end{bmatrix}
=
\begin{bmatrix}
 -T^r[F_*]\\
 T^{\ell}[F_*]\\
 0\\
 L[F_*]
\end{bmatrix},
\end{gather*}
where $\cH_0(c_*):=(Z_-(0)-c_*)(Z_+(0)-c_*)$ and 
\begin{gather}
\label{Ipm}  I^\ell (c_*):= \int_{y_1}^0 \frac{1}{(Z_-(y)-c_*)(Z_+(y)-c_*) \left( \varphi_*^\ell (y) \right)^2 } \mathrm{d}y, \\
 I^r(c_*):= \int^{y_2}_0 \frac{1}{(Z_-(y)-c_*)(Z_+(y)-c_*) \left( \varphi_*^r (y) \right)^2 } \mathrm{d}y,\\
L[F_*](c_*):=\varphi_*^\ell (0)\int^{y_{*}^r}_0 (F_* \varphi_*^r)(y)\mathrm{d}y
-\varphi_*^r(0)\int^{y_{*}^\ell}_0(F_* \varphi_*^\ell)(y)\mathrm d y,\\
T^\ell [F_*](c_*):=\int_0^{y_1}\frac{\int_{y_{*}^\ell}^{y}( F_* \varphi_*^\ell)(z)\mathrm{d}z}{(Z_-(y)-c_*)(Z_+(y)-c_*) \left( \varphi_*^\ell(y) \right)^2 } \mathrm{d}y,\\
\ T^r[F_*](c_*):= \int_0^{y_2} \frac{\int_{y_{*}^r}^{y}( F_* \varphi_*^r)(z)\mathrm{d}z}{(Z_-(y)-c_*)(Z_+(y)-c_*) \left( \varphi_*^r(y)\right)^2}\mathrm{d}y.
\end{gather}
And hereafter, we denote
\begin{equation*}
\begin{split}
 \cD(c_*):= & (Z_-(0)-c_*)(Z_+(0)-c_*) \varphi_*^r(0) \varphi_*^\ell(0)\left(\va_*^{\ell}(0)\pa_y \varphi_*^r(0)  -\va_*^r(0) \pa_y\varphi_*^\ell(0) \right) I^r(c_*) I^\ell (c_*) \\
 & - \left(\varphi_*^r (0)\right)^2 I^r (c_*)-(\varphi_*^\ell(0))^2 I^\ell(c_*),
\end{split} 
\end{equation*}
which is in fact the determinant of the matrix in the matrix equation above. 

Continuing solving for the coefficients, we obtain
\begin{equation}
\begin{split}
&\mu^r[F_*](c_*) =  \wt \mu^r[F_*](c_*)\\
&:=  \frac{1}{\mathcal{D} (c_*)} (Z_-(0)-c_*)(Z_+(0)-c_*) \varphi_*^{r}(0) \varphi_*^\ell(0) \left(\partial_y\varphi_*^\ell(0) - \partial_y\varphi_*^r(0) \right) T^r[F_*](c_*) I^\ell(c_*)\\
&\quad -  \frac{1}{\mathcal{D} (c_*)}\left(\varphi_*^\ell(0) L[F_*](c_*) I^\ell(c_*) + \left(\varphi_*^r(0)\right)^2 T^r[F_*](c_*) \right) \\
& \quad-\frac{1}{\mathcal{D} (c_*)}\left(\varphi_*^r(0) \varphi_*^\ell(0) T^\ell [F_*](c_*) \right),\\
\end{split}
\end{equation}
\begin{equation}
\begin{split}
& \mu^\ell [F_*](c_*) = \wt \mu^\ell [F_*](c_*) \\
&:=  \frac{1}{\mathcal{D} (c_*)}  (Z_-(0)-c_*)(Z_+(0)-c_*) \varphi_*^r(0) \varphi_*^\ell(0) \left(\partial_y\varphi_*^\ell(0) - \partial_y\varphi_*^r(0) \right) T^\ell [F_*](c_*) I^r (c_*) \\
 &\quad +\frac{1}{\mathcal{D} (c_*)} \left( \varphi_*^r(0) L[F_*](c_*) I^r (c_*) - \left(\varphi_*^\ell(0) \right)^2 T^\ell [F_*](c_*) \right)\\
 &\quad + \frac{1}{\mathcal{D} (c_*)} \left( \varphi_*^r(0) \varphi_*^\ell (0)T^r [F_*](c_*) \right) ,
 \end{split}
\end{equation}
\begin{equation}
\begin{split}
\nu^r [F_*](c_*) =: & \frac{1}{\mathcal D(c_*)} \left(  \varphi_*^\ell(0) L[F_*](c_*) I^r(c_*) I^\ell(c_*) -\varphi_*^r(0) \varphi_*^\ell(0) T^\ell [F_*](c_*) I^r(c_*) \right)\\
& +\frac{1}{\mathcal D(c_*)} \left(  \left( \varphi_*^\ell (0)\right)^2 T^r [F_*](c_*) I^\ell (c_*) \right),\\
\end{split}
\end{equation}
\begin{equation}
\begin{split}
\nu^\ell [F_*](c_*):= & \frac{1}{\mathcal D(c_*)} \left( \varphi_*^r(0) L[F_*](c_*) I^r (c_*) I^\ell (c_*) +\varphi_*^r (0)\varphi_*^\ell(0) T^r [F_*](c_*) I^\ell (c_*) \right) \\
& + \frac{1}{\mathcal D(c_*)} \left( \left( \varphi_*^r(0) \right)^2 T^\ell [F_*](c_*) I^r (c_*) \right) .
\end{split}
\end{equation}

Thus, we can verify that
\beno
\Phi_*(y,c_*)=\begin{cases}
\Phi_*^\ell(y,c_*) \text{ on }[y_1,  0],\\
\Phi_*^r(y, c_*) \text{ on }[0, y_2]
\end{cases}
\eeno
is well-defined and is the unique $C^1$- solution to \eqref{eq:blan} on $\mathcal I_0.$

To facilitate the estimation of $I^k(c_*)$ for $k=r$ or $\ell$, we introduce also the following quantities
\begin{equation}\notag
\begin{split}
& I_1^r(c_*):=\int_0^{y_2}\f{1}{(Z_-(y)-c_*)(Z_+(y)-c_*)}\left(\f{1}{ \left( \varphi_*^r(y) \right)^2}-1\right) \mathrm{d}y+\int_0^{y_2}\f{1}{2b(y)(Z_-(y)-c_*)}\mathrm{d}y, \\
& I_1^\ell(c_*):=\int_{y_1}^0\f{1}{(Z_-(y)-c_*)(Z_+(y)-c_*)}\left(\f{1}{ \left(\varphi_*^\ell(y) \right)^2}-1\right)\mathrm{d}y+\int_{y_1}^0\f{1}{2b(y)(Z_-(y)-c_*)}\mathrm{d}y,
\end{split}
\end{equation}
\begin{equation}\notag
\begin{split}
I_2^r(  \sigma(c_*) ):=&-\f{1}{2b(y_2)V'(y_2)}\ln \left( \f{|V(y_2)-  {\sigma(c_*) }|}{|V(y_2)+  {\sigma(c_*) }|} \right)+ \f{i}{2b(y_2)V'(y_2)}\arctan\left( \f{\text{Im}\,  {\sigma(c_*) }}{|V(y_2)- \text{Re}\,  {\sigma(c_*) }|} \right)\\
& +\int_0^{y_2} \pa_y\left(\f{1}{2b(y)V'(y)}\right) \text{Log}\left( \f{V(y)-  {\sigma(c_*) }}{V(y)+  {\sigma(c_*) }} \right) \mathrm{d}y\\
&+\f{i}{2b(y_2)V'(y_2)}\arctan\left( \f{\text{Im}\,  {\sigma(c_*) }}{|V(y_2)+\text{Re}\,  {\sigma(c_*) }|} \right) ,
\end{split}
\end{equation}
\begin{equation}\notag
\begin{split}
I_2^\ell(  {\sigma(c_*) }):=& \f{1}{2b(y_1)V'(y_1)}\ln \left( \f{|V(y_1)-  {\sigma(c_*) }|}{|V(y_1)+  {\sigma(c_*) }|} \right)+\int_{y_1}^0\pa_y\left(\f{1}{2b(y)V'(y)}\right)\text{Log} \left( \f{V(y)-  {\sigma(c_*) }}{V(y)+  {\sigma(c_*) }}\right) \mathrm{d}y\\
&+\f{i}{2b(y_1)V'(y_1)}\arctan\left(\f{\text{Im}\,  {\sigma(c_*) }}{|V(y_1)+ \text{Re}\,  {\sigma(c_*) }|}\right)\\
&+\f{i}{2b(y_1)V'(y_1)}\arctan \left(\f{\text{Im}\,  {\sigma(c_*) }}{|V(y_1)-\text{Re}\,  {\sigma(c_*) }|} \right)-\f{2i\pi}{2b(y_1)V'(y_1)}+\f{2i\pi}{2b(0)V'(0)},
\end{split}
\end{equation}
where $\text{Log}$ is the complex logarithm with the principal value of the argument in $(-\pi, \pi].$ 

We shall prove the following auxiliary estimates on $I^k(\sigma(c_*) ),$ $k= \ell$ or $r$ and $\cD(c_* )$ which will help us characterize the behaviors of the coefficients $\mu^k[F_*](c_*), \wt \mu^k[F_*](c_*)$ and $\nu^k[F_*](c_*),$ $k= \ell$ or $r,$ in the solution formulae to Equation \eqref{eq:blan}.
\begin{lemma}\label{38} 
Assume that $\mathrm{Im}\,c_* >0$ and $\mathrm{Im}\,\s(c_*)>0.$ It holds that   
\begin{equation}\label{381}
2{\sigma(c_*) } I^k(c_*)=-\f{i\pi}{b(0)\sqrt{2Z''_+(0)}}+2{\sigma(c_*) }I_1^{k}(c_*)+I_2^{k}({\sigma(c_*) }), \ k= \ell \text{ or }r.
\end{equation}
Moreover, there exist some $\delta_0>0$ and a constant $C$ depending only on $\alpha,$ such that if $|{\sigma(c_*) }|<\d_0,$ then the following estimates are true for $k=\ell$ or $r$ 
\begin{eqnarray}
\label{ipmg}& |I_1^{k}(c_*)|\leq C,  \\
\label{i2pm}& |I_2^{k}({\sigma(c_*) })|\leq C|{\sigma(c_*) }|^{\f14}, \\
\label{384} & C^{-1}\leq |2{\sigma(c_*) }I^{k}(c_*)|\leq C.
\end{eqnarray}
In particular,  
$$\lim_{c_*\to Z_+(0)} 2{\sigma(c_*) }I^{k}(c_*) = -\frac{i\pi}{b(0)\sqrt{2Z''_+(0)}}, \ k= \ell \text{ or } r.$$
\end{lemma}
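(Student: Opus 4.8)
The plan is to prove the exact identity \eqref{381} first, since the two‑sided bound \eqref{384} and the limit are then immediate consequences of \eqref{381} together with \eqref{ipmg} and \eqref{i2pm}. To get \eqref{381} I would split $I^k(c_*) = I_1^k(c_*) + \bigl(I^k(c_*)-I_1^k(c_*)\bigr)$ and observe that the definitions of $I^k$ and $I_1^k$ are arranged precisely so that, using $Z_+-Z_-=2b$ in the partial fraction identity $\frac{1}{(Z_+-c_*)(Z_--c_*)} = \frac{1}{2b}\bigl(\frac{1}{Z_--c_*}-\frac{1}{Z_+-c_*}\bigr)$, one gets $I^k-I_1^k = -\int\frac{\mathrm{d}y}{2b(Z_+-c_*)}$ over the relevant half‑interval ($[0,y_2]$ for $k=r$, $[y_1,0]$ for $k=\ell$). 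Then I factor $Z_+(y)-c_* = V(y)^2-\sigma(c_*)^2 = (V-\sigma)(V+\sigma)$ and use $\frac{2\sigma}{(V-\sigma)(V+\sigma)} = \frac{1}{V-\sigma}-\frac{1}{V+\sigma}$, so that $2\sigma(I^k-I_1^k) = -\int\frac{1}{2b}\bigl(\frac{1}{V-\sigma}-\frac{1}{V+\sigma}\bigr)\mathrm{d}y$. Since $V\in C^2(\mathcal I_0)$ is monotone with nonvanishing derivative, I integrate by parts via $\frac{1}{V\mp\sigma}=\frac{1}{V'}\partial_y\text{Log}(V\mp\sigma)$. The only nontrivial contribution is the boundary term at the critical point $y=0$, where $V(0)=0$: there $\text{Log}(-\sigma)-\text{Log}(\sigma)=\mp i\pi$ (the sign pinned down by $\mathrm{Im}\,\sigma>0$), which combined with $2b(0)V'(0)=b(0)\sqrt{2Z_+''(0)}$ produces the universal constant $-\frac{i\pi}{b(0)\sqrt{2Z_+''(0)}}$. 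The boundary terms at $y_1$ or $y_2$ and the leftover integral $\int\partial_y\bigl(\frac{1}{2bV'}\bigr)\bigl(\text{Log}(V-\sigma)-\text{Log}(V+\sigma)\bigr)\mathrm{d}y$ will, after careful tracking of the branch of $\text{Log}$, match $I_2^k(\sigma)$ exactly; the $\arctan$‑ and $2\pi i$‑corrections in the definitions of $I_2^k$ (and the asymmetry between the $\ell$‑ and $r$‑branches---note $V<0$ on $[y_1,0]$) are exactly the bookkeeping of when $\text{Log}(V-\sigma)-\text{Log}(V+\sigma)$ agrees with $\text{Log}\bigl(\frac{V-\sigma}{V+\sigma}\bigr)$ and when it differs by $2\pi i$. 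This branch bookkeeping is the fiddliest point of the argument.

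For \eqref{ipmg}: in $I_1^k$ the term $\int\frac{\mathrm{d}y}{2b(Z_--c_*)}$ is bounded because $0<|u|<b$ gives $\mathrm{Ran}\,Z_+\cap\mathrm{Ran}\,Z_-=\emptyset$, so $Z_--c_*$ stays away from $0$. For $\int\frac{1}{(Z_--c_*)(Z_+-c_*)}\bigl(\frac{1}{(\varphi_*^k)^2}-1\bigr)\mathrm{d}y$ I would use $|\varphi_*^k|>\tfrac12$ and $|\varphi_*^k(y)-1|\lesssim|y-y_*^k|^2$ to get $\bigl|\frac{1}{(\varphi_*^k)^2}-1\bigr|\lesssim|y-y_*^k|^2$, reducing the task to bounding $\int\frac{|y-y_*^k|^2}{|(Z_--c_*)(Z_+-c_*)|}\mathrm{d}y$ uniformly in $c_*$. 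The key input is the pointwise lower bound on $|Z_+(y)-c_*|=|V(y)^2-\sigma^2|$: it is $\gtrsim|\sigma|^2$ where $|V(y)|\ll|\sigma|$, $\gtrsim V(y)^2$ where $|V(y)|\gg|\sigma|$, and $\gtrsim |Z_+'(y_*^k)|\,|y-y_*^k|+|\mathrm{Im}\,c_*|$ near $y_*^k$. Since the conditions of Lemma \ref{lem: r} force $|\mathrm{Re}\,\sigma|\sim|\sigma|$, hence $|y_*^k|\sim|\sigma|$ and $|Z_+'(y_*^k)|\sim|\sigma|$, in each of these regimes the quadratic numerator defeats the degeneracy of the denominator and the integral is $O(|\sigma|)$, with constant depending only on $\alpha,u,b,\mathcal I_0$.

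For \eqref{i2pm}: the $\ln$‑modulus and $\arctan$ boundary terms in $I_2^k$ are all $O(|\sigma|)$, since $V(y_1),V(y_2)$ are bounded away from $0$ while $|\sigma|$ is small. In the integral $\int\partial_y\bigl(\frac{1}{2bV'}\bigr)\text{Log}\bigl(\frac{V-\sigma}{V+\sigma}\bigr)\mathrm{d}y$ the coefficient is bounded ($V\in C^2$, $2bV'$ bounded away from $0$), so I split at $|V(y)|=|\sigma|^{1/2}$: on $\{|V|>|\sigma|^{1/2}\}$ one has $\bigl|\text{Log}\bigl(\frac{V-\sigma}{V+\sigma}\bigr)\bigr|\lesssim\frac{|\sigma|}{|V|}\le|\sigma|^{1/2}$, contributing $O(|\sigma|^{1/2})$; on $\{|V|\le|\sigma|^{1/2}\}$, a set of measure $\lesssim|\sigma|^{1/2}$ because $|V|\sim|y|$ near $0$, one uses $\bigl|\text{Log}\bigl(\frac{V-\sigma}{V+\sigma}\bigr)\bigr|\le\pi+\bigl|\ln\bigl|\frac{V-\sigma}{V+\sigma}\bigr|\bigr|$ and integrability of the logarithmic singularity, contributing $O(|\sigma|^{1/2}\log\frac1{|\sigma|})$. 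Altogether $|I_2^k(\sigma)|\lesssim|\sigma|^{1/2}\log\frac1{|\sigma|}\le C|\sigma|^{1/4}$.

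Finally, combining \eqref{381}, \eqref{ipmg}, \eqref{i2pm} gives $\bigl|2\sigma(c_*)I^k(c_*)+\frac{i\pi}{b(0)\sqrt{2Z_+''(0)}}\bigr|\le 2C|\sigma|+C|\sigma|^{1/4}$; choosing $\delta_0$ small enough that the right side is at most half of $\frac{\pi}{b(0)\sqrt{2Z_+''(0)}}$ yields \eqref{384} by the triangle inequality, and letting $\sigma(c_*)\to0$ (equivalently $c_*\to Z_+(0)$) gives the stated limit. I expect the main obstacle to be twofold: the branch‑of‑$\text{Log}$ bookkeeping in the derivation of \eqref{381}, and the uniform‑in‑$c_*$ control in \eqref{ipmg}, where the argument must genuinely exploit the square‑root structure $Z_+-c_*=(V-\sigma)(V+\sigma)$ together with the fact that $\varphi_*^k-1$ and $Z_+-c_*$ degenerate quadratically at the same point $y_*^k$, which collapses onto the critical point as $c_*\to Z_+(0)$.
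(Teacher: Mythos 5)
Your proposal is correct and follows essentially the same route as the paper: the partial-fraction split $\frac{1}{(Z_+-c_*)(Z_--c_*)}=\frac{1}{2b}\bigl(\frac{1}{Z_--c_*}-\frac{1}{Z_+-c_*}\bigr)$, the factorization $Z_+-c_*=(V-\sigma)(V+\sigma)$ with integration by parts via $\frac{1}{V\mp\sigma}=\frac{1}{V'}\partial_y\mathrm{Log}(V\mp\sigma)$ producing the boundary term $-\frac{i\pi}{2b(0)V'(0)}$ at the critical point, the bounds on $I_1^k$ from $|\varphi_*^k|\geq\frac12$ and $|\varphi_*^k-1|\lesssim|y-y_*^k|^2$, and the splitting of the $I_2^k$ integral at $|V|\sim\sqrt{|\sigma|}$ are all exactly the paper's steps. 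The only cosmetic difference is that on the small set $\{|V|\lesssim\sqrt{|\sigma|}\}$ you bound the logarithmic singularity in $L^1$ (getting $|\sigma|^{1/2}\log\frac{1}{|\sigma|}$) while the paper uses its $L^2$ bound with Cauchy--Schwarz (getting $|\sigma|^{1/4}$ directly); both yield \eqref{i2pm}.
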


\begin{proof}
Splitting the integral $I^r(c_*)$ and utilizing the function $V,$ we have
\begin{align*}
2{\sigma(c_*) } I^r(c_*) =& 2{\sigma(c_*) }\int_0^{y_2} \f{1}{(Z_-(y)-c_*)(Z_+(y)-c_*)\left( \varphi_*^r(y) \right)^2} \mathrm{d}y\\
= & 2{\sigma(c_*) }\int_0^{y_2}\f{1}{(Z_-(y)-c_*)(Z_+(y)-c_*)}\left(\f{1}{ \left( \varphi_*^r(y) \right)^2}-1\right) \mathrm{d}y\\
&+2{\sigma(c_*) } \int_0^{y_2}\f{1}{2b(y)(Z_-(y)-c_*)}\mathrm{d}y
-2{\sigma(c_*) } \int_0^{y_2}\f{1}{2b(y)(Z_+(y)-c_*)}\mathrm{d}y\\
= & 2{\sigma(c_*) } I_1^r(c_*)-\int_0^{y_2}\f{1}{ 2b(y)(V(y)-{\sigma(c_*) }) } \mathrm{d}y +\int_0^{y_2} \f{1}{ 2b(y)(V(y)+{\sigma(c_*)) }  }\mathrm{d}y \\
= & 2{\sigma(c_*)} I_1^r(c_*)-\int_0^{y_2} \f{1}{2b(y)V'(y)}\pa_y\left(\text{Log}(V(y)-{\sigma(c_*) })\right)\mathrm{d}y \\
& +\int_0^{y_2} \f{1}{2b(y)V'(y)}\pa_y\left(\text{Log}(V(y)+{\sigma(c_*) })\right)\mathrm{d}y.
\end{align*}
Integration by parts yields
\begin{align*}
2{\sigma(c_*) } I^r(c_*) = & 2{\sigma(c_*) } I_1^r(c_*)-\f{\text{Log}(V(y)-{\sigma(c_*) })}{2b(y)V'(y)}\Big|_{y=0}^{y=y_2}+\f{\text{Log}(V(y)+{\sigma(c_*) })}{2b(y)V'(y)}\Big|_{y=0}^{y=y_2} \\
& +\int_0^{y_2} \pa_y \left( \f{1}{2b(y)V'(y)}\right) \left( \text{Log}\left( V(y)-{\sigma(c_*) } \right) -\text{Log}\left(V(y)+{\sigma(c_*) } \right) \right)\mathrm{d}y\\
=& 2{\sigma(c_*)}I_1^r(c_*)-\f{1}{2b(y_2)V'(y_2)}\ln \left( \f{|V(y_2)-{\sigma(c_*) }|}{|V(y_2)+{\sigma(c_*) }|}\right)\\
&+\int_0^{y_2} \pa_y\left(\f{1}{2b(y)V'(y)}\right)\text{Log}\left( \f{V(y)-{\sigma(c_*) }}{V(y)+{\sigma(c_*) }} \right) \mathrm{d}y\\
&+\f{i}{2b(y_2)V'(y_2)}\arctan \left( \f{\text{Im}\,{\sigma(c_*) }}{|V(y_2)+ \text{Re}\,{\sigma(c_*) }|} \right)\\
&+\f{i}{2b(y_2)V'(y_2)}\arctan\left( \f{\text{Im}\,{\sigma(c_*) }}{|V(y_2)+ \text{Re}\,{\sigma(c_*) }|} \right)  -\f{i\pi}{2b(0)V'(0)}\\
=& 2{\sigma(c_*) }I_1^r(c_*)-\f{i\pi}{2b(0)V'(0)}+I_2^r({\sigma(c_*) }).
\end{align*}

Similarly, we have
\begin{align*}
2{\sigma(c_*) }I^\ell (c_*) = & 2{\sigma(c_*) }I_1^\ell (c_*)-\int_{y_1}^0\f{1}{2b(y)(V(y)-{\sigma(c_*) })}\mathrm{d}y+\int_{y_1}^0\f{1}{2b(y)(V(y)+{\sigma(c_*) })}\mathrm{d}y\\
= & 2{\sigma(c_*) }I_1^\ell(c_*)-\f{\text{Log}(V(y)-{\sigma(c_*) })}{2b(y)V'(y)}\Big|_{y=y_1}^{y=0}\\
&+\int_{y_1}^0\pa_y\left(\f{1}{2b(y)V'(y)}\right)\text{Log}(V(y)-{\sigma(c_*) })\mathrm{d}y +\f{\text{Log}(V(y)+{\sigma(c_*) })}{2b(y)V'(y)}\Big|_{y=y_1}^{y=0}\\
&-\int_{y_1}^0\pa_y\left(\f{1}{2b(y)V'(y)}\right)\text{Log}(V(y)+{\sigma(c_*) })\mathrm{d}y\\
= & 2{\sigma(c_*) }I_1^\ell(c_*)+\f{1}{2b(y_1)V'(y_1)}\ln \left( \f{|V(y_1)-{\sigma(c_*) }|}{|V(y_1)+{\sigma(c_*) }|} \right)\\
&+\int_{y_1}^0\pa_y\left(\f{1}{2b(y)V'(y)}\right)\text{Log}\left( \f{V(y)-{\sigma(c_*) }}{V(y)+{\sigma(c_*) }} \right) \mathrm{d}y\\
&-\f{i}{2b(y_1)V'(y_1)}\left(2\pi-\arctan\f{\text{Im}\,{\sigma(c_*) }}{|V(y_1)+\text{Re}\,{\sigma(c_*) }|}-\arctan\f{\text{Im} {\sigma(c_*) }}{|V(y_1)-\text{Re}\,{\sigma(c_*) }|}\right)\\
& -\f{i}{2b(0)V'(0)}\left(-\pi+\arctan\f{ \text{Im}\,{\sigma(c_*) }}{| \text{Re}\,{\sigma(c_*) }|}-\arctan\f{ \text{Im}\,{\sigma(c_*) }}{| \text{Re}\,{\sigma(c_*) }|}\right)\\
= & 2{\sigma(c_*) }I_1^\ell(c_*)-\f{i\pi}{2b(0)V'(0)}+I_2^\ell ({\sigma(c_*) }).
\end{align*}
We have thus shown \eqref{381}.

We have the bound on $I^k_1(c_*),$ $k=\ell$ or $r,$ in \eqref{ipmg} for $|{\sigma(c_*) }|<\d_0 \ll 1$ by the fact that $\left|\varphi_*^{k} \right| \geq \frac{1}{2}$ and $\left|\varphi_*^{k}(y)-1 \right|\leq C\left|y-y_{*}^{k} \right|^2,$ $k=\ell$ or $r.$

We proceed to prove \eqref{i2pm}. As $ \left|V'(y_2) \right| >0,$ by taking $|\text{Im}\,({\sigma(c_*) })|\ll \left|\text{Re}({\sigma(c_*) })\right|<\d_0,$ we obtain the following 
\begin{align}\label{estarg}
\left|\f{i}{2b(y_2)V'(y_2)}\left(\arctan\f{\text{Im}\,{\sigma(c_*) }}{|V(y_2)-\text{Re}\,{\sigma(c_*) }|}+\arctan\f{\text{Im}\,{\sigma(c_*) }}{|V(y_2)+\text{Re}\,{\sigma(c_*) }|}\right)\right|\leq C|{\sigma(c_*) } |.
\end{align}
For sufficiently small $\d_0$ and $|\sigma(c_*) |<\d_0,$ we have
\begin{equation}\label{estlog}
\f{1}{2b(y_2)V'(y_2)}\ln\left( \f{|V(y_2)-{\sigma(c_*) }|}{|V(y_2)+{\sigma(c_*) }|} \right) 
\leq  C|{\sigma(c_*) }|,
\end{equation}
as $$\f{1}{2b(y_2)V'(y_2)}\ln\left( \f{|V(y_2)-{\sigma(c_*) }|}{|V(y_2)+{\sigma(c_*) }|} \right) = \f{1}{2b(y_2)V'(y_2)}\ln\left(1-\f{2V(y_2) \text{Re}\, {\sigma(c_*) }}{|V(y_2)+\text{Re}\,{\sigma(c_*) }|^2+|\text{Im}\,({\sigma(c_*) })|^2}\right).$$

We split the integral as follows --
\begin{align*}
&\int_0^{y_2} \pa_y\left(\f{1}{2b(y)V'(y)}\right) \text{Log} \left( \f{V(y)-{\sigma(c_*) }}{V(y)+{\sigma(c_*) }} \right)\mathrm{d}y \\
&= \int_{E}\pa_y\Big(\f{1}{2b(y)V'(y)}\Big) \text{Log} \left( \f{V(y)-{\sigma(c_*) }}{V(y)+{\sigma(c_*) }} \right)\mathrm{d}y\\
&\quad +\int_{E^c}\pa_y\Big(\f{1}{2b(y)V'(y)}\Big) \text{Log} \left( \f{V(y)-{\sigma(c_*) }}{V(y)+{\sigma(c_*) }} \right) \mathrm{d}y\\
&:=K_1({\sigma(c_*) })+K_2({\sigma(c_*) }),
\end{align*}
where the set $E$ is defined as $E := \left \{y \in [0, y_2] : V(y)< M\sqrt{|{\sigma(c_*) }|} \right\}$ for sufficiently large $M$ independent of ${\sigma(c_*) },$ while $E^c:= \left \{y \in [0, y_2]:V(y)\geq M\sqrt{|{\sigma(c_*) }|} \right\}.$

For $|{\sigma(c_*) }|<\d_0$ with sufficiently small $\d_0,$ we have the estimate
\begin{equation}\label{estk1}
|K_1({\sigma(c_*) })|\leq C\sqrt[4]{|{\sigma(c_*) }|}+C\sqrt{|{\sigma(c_*) }|}\leq C\sqrt[4]{|{\sigma(c_*) }|},
\end{equation}
as $\text{Log}\left( \f{V(y)-{\sigma(c_*) }}{V(y)+{\sigma(c_*) }} \right) \in L^2$ in the set $E,$ in particular,
$$\text{Log}\left( \f{V(y)-{\sigma(c_*) }}{V(y)+{\sigma(c_*) }} \right) = \ln \left( \f{|V(y)-{\sigma(c_*) }|}{|V(y)+{\sigma(c_*) }|} \right) +i\arg(V(y)-{\sigma(c_*) })-i\arg(V(y)+{\sigma(c_*) }) .$$

Meanwhile, in the set $E^c$, it holds that
\begin{align*}
\text{Log}\left( \f{V(y)-{\sigma(c_*) }}{V(y)+{\sigma(c_*) }}\right)=& \ln \left( \frac{|V(y)-{\sigma(c_*) }|}{ |V(y)+{\sigma(c_*) }|} \right)-i\arctan \left(\f{\text{Im}\,{\sigma(c_*) }}{|V(y)-\text{Re}\,{\sigma(c_*) }|} \right) \\
&-i\arctan \left(\f{ \text{Im}\,{\sigma(c_*) }}{|V(y)+ \text{Re}\,{\sigma(c_*)}|}\right),
\end{align*}
from which we infer that 
\begin{equation}\label{estk2}
|K_2({\sigma(c_*) })| \leq C\sqrt{|{\sigma(c_*) }|},
\end{equation}
when $|{\sigma(c_*) }|<\d_0$ for small enough $\d_0.$ 

Combining \eqref{estarg}, \eqref{estlog}, \eqref{estk1} and \eqref{estk2}, we have
$$ |I_2^r({\sigma(c_*) })|\leq C \left( \sqrt[4]{|{\sigma(c_*) }|}+|{\sigma(c_*) }| + |\sigma(c_*)| \right) \leq C\sqrt[4]{|{\sigma(c_*) }|}.$$

We note that the terms $\f{1}{2b(y_1)V'(y_1)}\ln \left(\f{|V(y_1)-{\sigma(c_*) }|}{|V(y_1)+{\sigma(c_*) }|} \right)$ and $\f{i}{2b(y_1)V'(y_1)}\arctan\left( \f{\text{Im}\,({\sigma(c_*) })}{|V(y_1)\pm \text{Re}\,({\sigma(c_*) })|}\right)$ enjoy estimates similar to \eqref{estlog} and \eqref{estarg}, respectively.

Introducing the sets 
$$\mathcal E := \left \{y \in [y_1, 0] : V(y)< M\sqrt{|{\sigma(c_*) }|} \right\} \text{ and }\mathcal E^c:= \left \{y \in [y_1, 0]:V(y)\geq M\sqrt{|{\sigma(c_*) }|} \right\},$$ where $M$ is supposed to be large and independent of $\sigma(c_*),$ we split the integral in $I^{\ell}_2(\sigma(c_*):$
\begin{equation}\notag
\int_{y_1}^0\pa_y\left(\f{1}{2b(y)V'(y)}\right)\text{Log}\left( \f{V(y)-{\sigma(c_*) }}{V(y)+{\sigma(c_*) }}\right) \mathrm{d}y-\f{2i\pi}{2b(y_1)V'(y_1)}+\f{2i\pi}{2b(0)V'(0)} =: \mathcal K_1(\sigma(c_*)) + \mathcal K_2(\sigma(c_*)),
\end{equation}
where
\begin{gather*}
\mathcal K_1(\sigma(c_*)):= \int_{\mathcal E} \pa_y\left(\f{1}{2b(y)V'(y)}\right)\text{Log}\left( \f{V(y)-{\sigma(c_*) }}{V(y)+{\sigma(c_*) }}\right) \mathrm{d}y, \\
\mathcal K_2(\sigma(c_*)):=\int_{\mathcal E^c} \pa_y\left(\f{1}{2b(y)V'(y)}\right)\text{Log}\left( \f{V(y)-{\sigma(c_*) }}{V(y)+{\sigma(c_*) }}\right) \mathrm{d}y-\f{2i\pi}{2b(y_1)V'(y_1)}+\f{2i\pi}{2b(0)V'(0)}.
\end{gather*}

On the set $\mathcal E$, we have $\text{Log} \left( \f{V(y)-{\sigma(c_*) }}{V(y)+{\sigma(c_*) }}\right) \in L^2,$ as 
\beno
\text{Log} \left( \f{V(y)-{\sigma(c_*) }}{V(y)+{\sigma(c_*) }}\right)=\ln \left( \f{|V(y)-{\sigma(c_*) }|}{|V(y)+{\sigma(c_*) }|}\right)+i\arg(V(y)-{\sigma(c_*) })-i\arg(V(y)+{\sigma(c_*) }).
\eeno
For $\d_0$ small enough, it holds that
\begin{align}\notag
|\mathcal K_1({\sigma(c_*) })|\leq C\sqrt[4]{|{\sigma(c_*) }|}+C\sqrt{|{\sigma(c_*) }|}\leq C\sqrt[4]{|{\sigma(c_*) }|}.
\end{align}

On the $\mathcal{E}^c,$ we have
\begin{align*}
\text{Log} \left( \frac{V(y)-{\sigma(c_*) }} {V(y)+{\sigma(c_*) }} \right)= & \ln \left( \frac{ |V(y)-{\sigma(c_*) }|} {|V(y)+{\sigma(c_*) }|} \right) -i\arctan \left( \f{\text{Im}\,({\sigma(c_*) })}{|V(y)-\text{Re}\,({\sigma(c_*) })|} \right) \\
& -i\arctan \left( \f{\text{Im}\,({\sigma(c_*) })}{|V(y)+\text{Re}\,({\sigma(c_*) })|} \right)-2i\pi,
\end{align*}
from which we can deduce that for $\d_0$ small enough
\beno
|\mathcal K_2({\sigma(c_*) })|\leq C\sqrt{|{\sigma(c_*) }|}.
\eeno
It is then evident that
\beno
|I_2^\ell ({\sigma(c_*) }) \leq C\sqrt[4]{|{\sigma(c_*) }|}.
\eeno

By \eqref{ipmg} and \eqref{i2pm}, we have \eqref{384}.

\end{proof}

From Lemma \ref{38}, we can deduce the following corollary.
\begin{corol}\label{311}
It holds that for $|\sigma(c_*)|<\d_0$ with $\d_0>0$ small enough,
\begin{align}
\f{C^{-1}}{|\sigma(c_*)|}\leq|\cD(c_*)|\leq \f{C}{|\sigma(c_*)|}.
\end{align}
In addition, if $\mathrm{Im}\,c_* >0,$ then
$$\lim_{c_*\to Z_+(0)} {\sigma(c_*)} \cD(c_*) = \frac{i\pi}{b(0)\sqrt{2Z''_+(0)}}.$$
\end{corol}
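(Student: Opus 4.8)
The plan is to extract the leading behavior of $\cD(c_*)$ as $c_*\to Z_+(0)$ by multiplying through by $\sigma(c_*)$ and reading off the contribution of each of the three summands in the definition of $\cD(c_*)$ from Lemma \ref{38} and the pointwise properties of $\varphi_*^k$, $k=\ell$ or $r$, recorded above. As before I treat only the case $c=Z_+(0)$ with $\mathrm{Im}\,c_*>0$ and $\mathrm{Im}\,\sigma(c_*)>0$. Two elementary observations drive the estimate. First, since $Z_+(0)-c_*=-\sigma(c_*)^2$ while $Z_-(0)-c_*$ stays bounded away from $0$ (recall $\mathrm{Ran}\,Z_+\cap\mathrm{Ran}\,Z_-=\emptyset$), we have $|\cH_0(c_*)|\le C|\sigma(c_*)|^2$. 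Second, from $Z_+(y_*^k)=\mathrm{Re}\,c_*$ and the non-degeneracy $Z''_+(0)\neq 0$ we get $|y_*^k|^2\sim|\mathrm{Re}\,c_*-Z_+(0)|\le|c_*-Z_+(0)|=|\sigma(c_*)|^2$, hence $|y_*^k|\le C|\sigma(c_*)|$; inserting this into $|\pa_y\varphi_*^k(0)|\le C|y_*^k|$ and $|\varphi_*^k(0)-1|\le C|y_*^k|^2$ (so in particular $|\varphi_*^k(0)|\le C$ and $\varphi_*^k(0)\to 1$ as $c_*\to Z_+(0)$) yields $|\varphi_*^\ell(0)\pa_y\varphi_*^r(0)-\varphi_*^r(0)\pa_y\varphi_*^\ell(0)|\le C|\sigma(c_*)|$.

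Combining these with $|I^k(c_*)|\le C|\sigma(c_*)|^{-1}$ from \eqref{384}, the first summand of $\cD(c_*)$ is bounded by $C|\sigma(c_*)|^2\cdot C|\sigma(c_*)|\cdot C|\sigma(c_*)|^{-2}=C|\sigma(c_*)|$, so after multiplying $\cD(c_*)$ by $\sigma(c_*)$ it is $O(|\sigma(c_*)|^2)$, and we are left with
\[
\sigma(c_*)\cD(c_*)=-\frac12(\varphi_*^r(0))^2\big(2\sigma(c_*)I^r(c_*)\big)-\frac12(\varphi_*^\ell(0))^2\big(2\sigma(c_*)I^\ell(c_*)\big)+O(|\sigma(c_*)|^2).
\]
The upper bound $|\sigma(c_*)\cD(c_*)|\le C$ is then immediate from $|2\sigma(c_*)I^k(c_*)|\le C$ and $|\varphi_*^k(0)|\le C$. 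For the lower bound I would substitute the identity \eqref{381}, namely $2\sigma(c_*)I^k(c_*)=-i\pi\big(b(0)\sqrt{2Z''_+(0)}\big)^{-1}+2\sigma(c_*)I_1^k(c_*)+I_2^k(\sigma(c_*))$, together with $|2\sigma(c_*)I_1^k(c_*)|\le C|\sigma(c_*)|$ from \eqref{ipmg} and $|I_2^k(\sigma(c_*))|\le C|\sigma(c_*)|^{1/4}$ from \eqref{i2pm}; since $(\varphi_*^r(0))^2$ and $(\varphi_*^\ell(0))^2$ are within $C|\sigma(c_*)|$ of $1$, the dominant term of $\sigma(c_*)\cD(c_*)$ is $\frac{i\pi}{2b(0)\sqrt{2Z''_+(0)}}\big((\varphi_*^r(0))^2+(\varphi_*^\ell(0))^2\big)$, whose modulus exceeds a fixed positive constant once $\delta_0$ is small, while every other term is $O(|\sigma(c_*)|^{1/4})$; choosing $\delta_0$ small enough gives $|\sigma(c_*)\cD(c_*)|\ge C^{-1}$. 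Finally, letting $c_*\to Z_+(0)$ in the displayed identity and using $\varphi_*^k(0)\to 1$ and $2\sigma(c_*)I^k(c_*)\to -i\pi\big(b(0)\sqrt{2Z''_+(0)}\big)^{-1}$ gives $\sigma(c_*)\cD(c_*)\to\frac{i\pi}{b(0)\sqrt{2Z''_+(0)}}$, which is the asserted limit.

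The only delicate point is that the first summand of $\cD(c_*)$ must be shown to be genuinely of lower order; this hinges on the near-cancellation $\varphi_*^\ell(0)\pa_y\varphi_*^r(0)-\varphi_*^r(0)\pa_y\varphi_*^\ell(0)=O(|\sigma(c_*)|)$, which is not a true Wronskian identity --- the functions $\varphi_*^\ell$ and $\varphi_*^r$ live on the two subintervals $[y_1,0]$ and $[0,y_2]$ and are normalized at the distinct points $y_*^\ell$ and $y_*^r$ --- but follows from $|\pa_y\varphi_*^k(0)|\le C|y_*^k|\le C|\sigma(c_*)|$ via the crude bound $|\mathrm{Re}\,c_*-Z_+(0)|\le|\sigma(c_*)|^2$. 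Everything else is uniform bookkeeping of error terms, controlled by the estimates of Lemma \ref{38}.
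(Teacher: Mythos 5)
Your proof is correct and follows exactly the route the paper intends: the paper gives no explicit proof of this corollary, stating only that it follows from Lemma \ref{38}, and your argument supplies precisely the missing bookkeeping — the first summand of $\cD(c_*)$ is of order $|\sigma(c_*)|$ because $|\cH_0(c_*)|\lesssim|\sigma(c_*)|^2$, $|\pa_y\va_*^k(0)|\lesssim|y_*^k|\lesssim|\sigma(c_*)|$ and $|I^k(c_*)|\lesssim|\sigma(c_*)|^{-1}$, while the remaining two summands carry the leading behavior via \eqref{381}--\eqref{384}. The sign and constant in the limit check out.
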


Recalling \eqref{eq: Theta_0+} and its counterpart on $[y_1, 0],$ we are ready to prove Lemma \ref{lem: phi*}

\noindent \textit{Proof of Lemma \ref{lem: phi*}. } 
It is easy to check that 
\begin{align*}
\left |L[F_{*}](c_*) \right|\leq C \left( |y_*^r|+|y_*^l| \right) \|F_* \|_{L^\infty}.
\end{align*}
We also have the following estimate for $T^\ell [F_*](c_*),$ 
\begin{align*}
\left |T^\ell [F_*](c_*) \right| \leq & \left|\int_0^{y_1}\f{\int_{y_*^\ell}^yF_*(z)\,\mathrm{d}z}{(Z_-(y)-c_*)(Z_+(y)-c_*)}\mathrm{d}y\right|\\
&+\left|\int_0^{y_1}\int_{y_*^\ell}^y \left( \f{F_*(z)}{(Z_-(y)-c_*)(Z_+(y)-c_*)} \right) \left(\f{\varphi_*^\ell(z)}{\varphi_*^\ell(y)^2}-1\right)\mathrm{d}z\,\mathrm{d}y\right|\\
\leq & C \|F_*\|_{L^{\infty}}\left(\int_0^{y_1}\f{1}{|y|+|y_*^\ell|} \mathrm{d} y +1 \right) \\
\leq & C \left(1+ \left |\ln \left( |y_*^\ell |\right) \right| \right)\|F_*\|_{L^{\infty}}.
\end{align*}
Similarly, 
\beno
|T^r[F_*](c_*)| \leq C \left(1+ \left|\ln ( |y_*^r| ) \right| \right)\|F_*\|_{L^{\infty}}.
\eeno

By Lemma \ref{38}, Corollary \ref{311} and the fact that $|y_*^r| \sim |y_*^\ell|,$ we have the following bounds on the coefficients $\mu^j[F_*](c_*), \wt \mu^j[F_*](c_*)$ and $\nu^j[F_*](c_*),$ $j=\ell$ or $r,$
\begin{eqnarray}
& \label{mue} \left| \mu^j[F_*](c_*) \right| \leq C |\s(c_*)|\left(1+\left|\ln (|y_*^r|) \right| \right)\|F_*\|_{L^{\infty}},\\
& \label{wmue}\left| \wt \mu^j[F_*](c_*) \right| \leq C |\s(c_*)|\left(1+\left|\ln (|y_*^r|) \right| \right)\|F_*\|_{L^{\infty}},\\
& \label{nue} \left| \nu^j [F_*](c_*) \right| \leq C \left(1+\left|\ln (|y_*^r|) \right| \right)\|F_*\|_{L^{\infty}}.
\end{eqnarray}

We recall the explicit formula of $\Phi_*^r$ given by \eqref{eq: Theta_0+} 
\beq\ \notag
\begin{split}
\Phi_*^r(y)= & \nu^r[F_*](c_*) \varphi_*^r(y) + {\mu}^{r}[F_*](c_*)  \varphi_*^r(y) \int_0^y \frac{1}{ (Z_-(y')-c_*)(Z_+(y')-c_*)\left( \varphi_*^r(y') \right)^2} \mathrm{d}y'\\
&+ \varphi_*^r(y) \int_0^y \frac{\int_{y_{*}^r}^{y'}(F_* \varphi_*^r)(z)\mathrm{d}z}
{ (Z_-(y')-c_*)(Z_+(y')-c_*)\left(\varphi_*^r(y') \right)^2} \mathrm{d}y' =: J_1 + J_2 +J_3, \ \text{for } y \in (0, y_*^r);
\end{split}
\eeq
and
\beq\ \notag
\begin{split}
\Phi_*^r(y)= & \wt{\mu}^{r}[F_*](c_*)  \varphi_*^r(y) \int^y_{y_2} \frac{1}{(Z_-(y')-c_*)(Z_+(y')-c_*)\left( \varphi_*^r(y') \right)^2} \mathrm{d}y'\\
&+ \varphi_*^r(y) \int^{y}_{y_2} \frac{\int_{y_{*}^r}^{y'}(F_* \varphi_*^r)(z)\mathrm{d}z}
{ (Z_-(y')-c_*)(Z_+(y')-c_*) \left(\varphi_*^r(y') \right)^2} \mathrm{d}y' =: \wt J_1 + \wt J_2, \ \text{for } y \in (y_*^r, y_2).
\end{split}
\eeq

From \eqref{nue}, it is clear that for $y \in (0, y_*^r)$, 
\beno
|J_1| \leq C\left(1+\left|  \ln\left( |y_*^r | \right) \right| \right) \|F_* \|_{L^\infty}\leq C\left(1+\left|  \ln\left( |y | \right) \right| \right) \|F_* \|_{L^\infty}.
\eeno

From \eqref{mue} and the fact that $\left(1+\left|  \ln\left( |y_*^r | \right)\right|\right) \lesssim |\s(c_*)|^{-\gamma},$ $0 < \gamma < \frac{1}{4},$ for $c_*$ close to $Z_+(0),$  we infer that
\begin{equation}\label{esj2}
\begin{split}
\left| J_2 \right| \leq & C \|F_* \|_{L^\infty}  |\s(c_*)|^{1-\gamma} \left| \int_0^y\f{1}{(|y'|+|\s(c_*)|)(y'-y_*^r)}\mathrm{d}y' \right|\\
\leq & C \|F_* \|_{L^\infty} |\sigma(c_*)|^\gamma \left|\int_0^y\f{1}{|y'-y_*^r|^{1+2\gamma}} \mathrm{d}y' \right|\\
\leq & C|\sigma(c_*)|^\gamma \|F_* \|_{L^\infty}  \left( \f{1}{|y-y_*^r|^{2\gamma}}+1 \right),
\end{split}
\end{equation}
which shows that $\|J_2\|_{L^2} \leq C \|F_*\|_{L^\infty}.$ In a similar way, we can show that $ \|\wt J_1  \|_{L^2} \leq C \|F_*\|_{L^\infty}.$

The term $J_3$ enjoys the following estimate
\begin{equation}\notag
\left| \int_0^y \frac{\int_{y_{*}^r}^{y'}(F_* \varphi_*^r)(z)\mathrm{d}z}
{ (Z_-(y')-c_*)(Z_+(y')-c_*)\left(\varphi_*^r(y') \right)^2} \mathrm{d}y' \right| \leq \|F_*\|_{L^\infty} \left( 1+ \left| \ln (|y_*^r |+|y|) \right|\right),
\end{equation}
and the estimate for $\wt J_2$ is similar.

Therefore, it holds that 
\begin{align*}
\| \Phi_*^r \|_{L^2(0,y_2)} &\leq \|J_1\|_{L^2(0,y_*^r)}+\|J_2\|_{L^2(0,y_*^r)}+\|J_3\|_{L^2(0,y_*^r)}+\|\wt J_1\|_{L^2(y_*^r,y_2)}+\|\wt J_2\|_{L^2(y_*^r,y_2)}\\
&\leq C \|F_* \|_{L^\infty}.
\end{align*}
Similarly, we have $\| \Phi_*^\ell \|_{L^2(y_1,0)} \leq C \|F_* \|_{L^\infty},$ which concludes our proof. 
\qed
\begin{remark}
In the case that $c_*\in\mathrm{Ran}\,Z_-$ with $\mathrm{Im}\,c_* >0$, without loss of generality, we may assume that $Z'_-(0)=0$ and $Z''_-(0)>0.$ We can instead let $V(y)^2=Z_-(y)-Z_-(0)$ and $(\s(c_*))^2=c_*-Z_-(0).$ Then by the same argument as in the proof of Lemma \ref{38}, we obtain
$$\lim_{c_*\to Z_-(0)}2\s(c_*)I^k(c_*)=\f{i\pi}{b(0)\sqrt{2Z''_-(0)}},\  k= \ell \text{ or } r$$
and 
$$
\lim_{c_*\to Z_-(0)}\s(c_*)\cD(c_*)=-\f{i\pi}{b(0)\sqrt{2Z''_-(0)}}.
$$
\end{remark}

\noindent \textit{Proof of Lemma \ref{stgcv}.} Recalling Lemma \ref{lem: m}, which asserts the strong convergence  
$$\Phi_n \rightarrow \Phi \text{ in }L^2 \text{ and }(Z_--c_n)(Z_+-c_n)\pa_y \Phi_n \rightarrow (Z_--c)(Z_+-c)\pa_y \Phi \text{ in }H^1,$$
away from the critical points in $Z_+^{-1}(c),$ along with Equation \eqref{eq:2}, we know that
\begin{equation}
\begin{split}
F^*_n \rightarrow F^*_\infty:= & F(y) -\frac{\Phi(y_2) -\Phi(y_1)}{(y_2 - y_1)} Z_+'(y)(Z_-(y) - c) \\
& - \frac{\Phi(y_2) -\Phi(y_1)}{(y_2 - y_1)} Z_-'(y)(Z_+(y) - c) + \al^2 (Z_+ -c)(Z_- -c) \mathcal L_{\Phi} (y) 
\end{split}
\end{equation}
in $H^1$ from our assumption that $F_n \rightarrow F$ in $H^1$ as $n \to \infty.$ 

From Lemma \ref{38} and Corollary \ref{311}, we know that as $c_n \to c$ and $F^*_n \rightarrow F^*_\infty $ in $H^1,$ 
\begin{equation}\notag
\wt{\mu}^{r}[F^*_n](c_n)  \varphi_*^r(y) \int^y_{y_2} \frac{1}{(Z_--c_n)(Z_+-c_n)\left( \varphi_*^r(y') \right)^2} \mathrm{d}y' \rightarrow 0 \text{ in }L^2(0, y_2),
\end{equation}
whose analogues also hold for the terms corresponding to $\wt{\mu}^\ell[F^*_n](c_n),$ ${\mu}^\ell[F^*_n](c_n)$ and ${\mu}^r[F^*_n](c_n).$

Therefore, we know from \eqref{eq: Theta_0+} that as $c_n \to c=Z_+(0)$ and $F^*_n \rightarrow F^*_\infty $ in $H^1,$ 
\beq\ \notag
\Phi^*_n \rightarrow \Phi_\infty^* \text{ in } L^2(y_1, y_2), \text{ with }\Phi_\infty^*(y) := \begin{cases} \varphi^\ell(y) \int^{y}_{y_1} \frac{\int_{0}^{y'}(F^*_\infty \varphi^\ell)(z)\mathrm{d}z}
{ (Z_--c)(Z_+-c)\left(\varphi^\ell (y') \right)^2} \mathrm{d}y'  \ \text{on } [y_1, 0),\\ 
\varphi^r(y) \int^{y}_{y_2} \frac{\int_{0}^{y'}(F^*_\infty \varphi^r)(z)\mathrm{d}z}
{(Z_--c)(Z_+-c)\left(\varphi^r(y') \right)^2} \mathrm{d}y'  \ \text{on } (0, y_2],
\end{cases}
\eeq
where $\varphi^\ell$ and $\varphi^r$ are the solutions to the corresponding homogeneous equation, as constructed in Appendix \ref{sturm}. It is then evident that $\Phi_n \rightarrow \Phi$ in $L^2(\mathcal I_0).$

It follows from \eqref{eq: Theta_0+} and \eqref{nue} that
\begin{align*}
|\Phi^*_n (0)| \leq & \left|\nu^r[F^*_n](c_n) \right| \leq  C\left(1+ \left|\ln\left( \sqrt{ |(c_n - Z_+(0))(c_n -Z_-(0))|} \right)\right| \right)\|F_n^* \|_{L^\infty}\\
\leq  & C|(Z_+(0) -c_n)(Z_-(0) -c_n) |^{-\frac{1}{4}}.
\end{align*}

Differentiating \eqref{eq: Theta_0+} with $c_* =c_n$ at $y=0$  yields
\begin{align*}
&\left| \partial_y \Phi^*_n (0) \right| = \left| \partial_y \Phi_*^r (0,c_n) \right| \\
&\leq  \left| \nu^r[F_n^*](c_n) \partial_y \varphi_*^r(0) \right| + \left| \frac{1}{(Z_-(0)-c_n)(Z_+(0)-c_n)\varphi_*^r(0)  } \left( {\mu}^{r}[F_n^*](c_n) + \int_{y_{*}^r}^{0}(F_n^* \varphi_*^r)(z)\mathrm{d}z \right) \right| \\
&\leq  C \left( 1+ \left| \ln (|y_*^r|) \right|  \right) \left( \frac{1}{\left| (Z_-(0)-c_n)(Z_+(0)-c_n) \right|^\frac{1}{2} } +1 \right) \\
&\leq C \left|(Z_-(0)-c_n)(Z_+(0)-c_n) \right|^{- \frac{3}{4} }.
\end{align*}

We note that $|(Z_--c_n)(Z_+-c_n)|\geq C^{-1}>0$ at $y_1$ and $y_2$ as the two points are far away enough from $(Z_+)^{-1}(c).$ By the facts
\beno
(Z_--c_n)(Z_+-c_n)\pa_y\Phi_n\in L^{\infty} \text{ and } \Phi_n\in L^2,
\eeno
we know that $|\Phi_n(y_1)|+|\Phi_n(y_2)|+|\pa_y\Phi_n(y_1)|+|\pa_y\Phi_n(y_2)|\leq C.$ The estimates on $\Phi^*_n(y_0)$ and $\pa_y \Phi^*_n(y_0)$ then imply \eqref{f0} and \eqref{pf0}.
\qed

\subsection{Proofs of Proposition \ref{prop: lap} and Proposition \ref{prop: conti}}\label{proof51} 

As in previous proofs, we restrict ourselves to the case $c \in \mathrm{Ran}\,Z_+.$ By our assumptions on $u$ and $b,$ given $c \in \text{Ran}\, Z_+,$ we can assume that 
$$(Z_+)^{-1}(c) = \left \{ y_{c, 1}, y_{c, 2}, ..., y_{c, k} ; y_{0,1}, y_{0, 2},..., y_{0,m} \right \},$$
where $y_{c, i},$ $i=1,2,...,k $ are the points at which $Z_+$ is monotone, i.e., $|Z'_+(y_{c, i})| > 0,$ whereas $y_{0,i},$ $i=1,2,...,m$ are the critical points, where $Z'_+(y_{0,i})=0$ (and $|Z_+''(y_{0,i})| \not =0$). 

For each $y_{c, i},$ there exists an interval $\mathcal I_i$ such that $y_{c,i} \in \mathcal I_i$ and $Z_+'(y_{c,i}) Z_+'(y) >0, \forall y \in \mathcal I_i,$ whereas for each critical point $y_{0, j}$ we may find an interval $\mathcal I_{0,j}$ containing $y_{0,j}$ such that $Z_+''(y_{0,j}) Z_+''(y) >0, \forall y \in\mathcal I_{0,j}.$ The rest of $\mathbb T,$ consisting of regions far away from the set $(Z_+)^{-1}(c),$ can also be covered by finitely many intervals, which we denote as $\{ \mathcal I_{a, i}\}_{i=1}^{\wt n}.$   The intervals $\mathcal I_{i},$ $i=1,2,...,k,$ $\mathcal I_{0, i},$ $i=1,2,...,m,$ and $\mathcal I_{a,i},$ $i=1,2,..., \wt n,$ can be chosen in the way such that 
$$[-\pi, \pi] = \left( \cup_{i=1}^k \mathcal I_i \right) \cup \left(\cup_{i=1}^m \mathcal I_{0,i} \right) \cup \left( \cup_{i=1}^{\wt n} \mathcal I_{a, i} \right),$$ 
while each of the intervals overlaps only the ones next to it, with the size of the overlap not exceeding $\frac{1}{10}\min_{1 \leq i \leq k} |\mathcal I_i|$ and  $\frac{1}{10}\min_{1 \leq i \leq m} |\mathcal I_{0,i}|.$ 

We can then construct a family of cut-off functions $\{ \chi_j \}_{j=1}^{m+k+\wt n}$ forming a smooth partition of unity of $\mathbb T$ such that each $\chi_j$ is supported in $\wt{\mathcal I}_j,$ with $\wt{\mathcal I}_j$ being one of the intervals from $\{ \mathcal I_i\}_{i=1}^k \cup\{\mathcal I_{0,i}\}_{i=1}^m \cup \{ \mathcal I_{a,i}\}_{i=1}^{\wt n}.$ The choice of the intervals ensures that $\chi_j \equiv 1$ near $(Z_+)^{-1}(c).$ 

See Figure \ref{Figure 1} for the partition of $[-\pi,\pi]$. 
\begin{figure}
\begin{tikzpicture}[thick, scale=0.45]
\draw[very thin,color=gray];
\draw[->] (-4,0) -- (24,0) node[right] {$y$};
\draw[->] (0,-4) -- (0,16) node[above] {$c$};
\draw[red,thick] (-1,10) -- (23,10);
\draw (23.5,10) node {\small $c$};
\draw[] (-1,-0.5) node {$-\pi$};
\draw (0,7) parabola bend  (1.2,5) (3,10);
\draw[dotted](3,10)-- (3,0); 
\draw[dotted](2.5,7.5)-- (2.5,0);
\draw[dotted](3.5,12)-- (3.5,0);
\draw (3,-0.5) node {\small $y_{c,1}$}; 
\draw (3,-1.5) node {\small $\cI_1$};
\draw[->] (2.5,0) -- (3.5,0);
\draw[->]  (3.5,0)--(2.5,0) ;
\draw (1.25,-3) node {\small $\cI_{a,1}$};
\draw (3,10) parabola bend  (4.1,13) (5,11);
\draw (5,11) parabola bend  (5.5,10) (6.5,12) ;
\draw[dotted](5.5,10)-- (5.5,0); 
\draw[dotted](5,11)-- (5,0);
\draw[dotted](6,10.5)-- (6,0);
\draw[->](5,0) -- (6,0);
\draw[->] (6,0)--(5,0);
\draw (5.5,-0.5) node {\small $y_{0,1}$};
\draw (5.5,-1.5) node {\small $\cI_{0,1}$};
\draw (4.25,-3) node {\small $\cI_{a,2}$};
\draw (6.5,12) parabola bend  (7,12.6) (8,10) ;
\draw[dotted](8,10)-- (8,0); 
\draw[dotted](8.5,8.1)-- (8.5,0);
\draw[dotted](7.5,12)-- (7.5,0);
\draw[->](7.5,0)-- (8.5,0); 
\draw[->] (8.5,0)--(7.5,0);
\draw (8,-0.5) node {\small $y_{c,2}$}; 
\draw (8,-1.5) node {\small $\cI_2$};
\draw (6.75,-3) node {\small $\cI_{a,3}$};
\draw (8,10)  parabola bend  (9.5,7) (10.5,8.5) ;
\draw (10.5,8.5)  parabola bend  (11.5,10) (12.5,8.5) ;
\draw[dotted](11.5,10)-- (11.5,0); 
\draw[dotted](12,9.6)-- (12,0);
\draw[dotted](11,9.5)-- (11,0);
\draw[->](11,0)-- (12,0); 
\draw[->] (12,0)--(11,0);
\draw (11.5,-0.5) node {\small $y_{0,2}$};
\draw (11.5,-1.5) node {\small $\cI_{0,2}$};
\draw (9.75,-3) node {\small $\cI_{a,4}$};
\draw (12.5,8.5)  parabola bend  (13.5,7.5) (15,10) ;
\draw[dotted](15,10)-- (15,0); 
\draw[dotted](14.5,8.5)-- (14.5,0);
\draw[dotted](15.5,11.5)-- (15.5,0);
\draw[->](15.5,0)-- (14.5,0); 
\draw[->] (14.5,0)--(15.5,0);
\draw (15,-0.5) node {\small $y_{c,3}$};
\draw (15,-1.5) node {\small $\cI_3$};
\draw (13.5,-3) node {\small $\cI_{a,5}$};
\draw (15,10) parabola bend  (17.5,14) (21,7) node[above,right]{\small Graph of $u+b$} ;
\draw[dotted](20.1,10)-- (20.1,0); 
\draw[dotted](19.7,11)-- (19.7,0);
\draw[dotted](20.4,8.7)-- (20.4,0);
\draw[->](19.7,0)-- (20.4,0); 
\draw[->] (20.4,0)--(19.7,0);
\draw (20,-0.5) node {\small $y_{c,4}$};
\draw (20,-1.5) node {\small $\cI_4$};
\draw (17.5,-3) node {\small $\cI_{a,6}$};
\draw (21.2,-0.4) node {\small $\pi$};
\draw[dotted](21,7)-- (21,0);
\draw (21,-3) node {\small $\cI_{a,7}$};
\end{tikzpicture}
 \caption{Partition of $[-\pi,\pi]$}
 \label{Figure 1}
\end{figure}
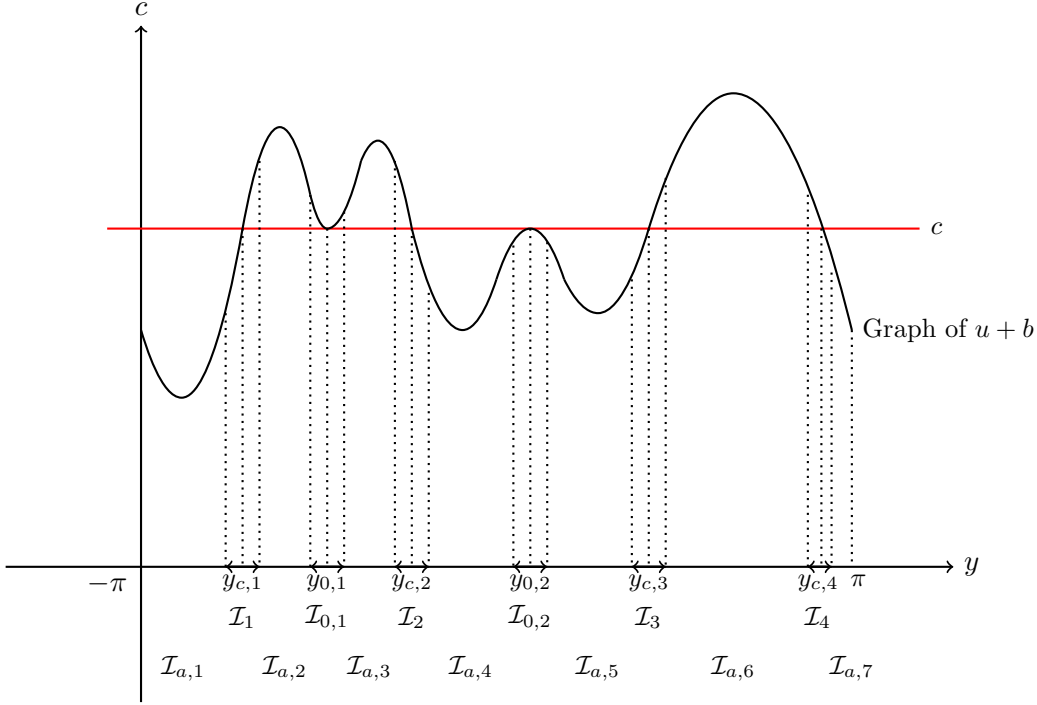

The proofs of the propositions are as follows.

\noindent \textit{Proof of Proposition \ref{prop: lap} } We note that on the intervals $\{\mathcal I_{a,i} \}_{i=1}^{\wt n}$ we do not encounter any issue in integration by parts. As a consequence of Lemma \ref{lem: m}, $q(y_c) = q' (y_c)= 0$ implies that $\pa_y \Phi(y_c)=0,$ allowing us to set the test function to be $f = \text{sgn}(Z_+ -c) \Phi \chi_j$ on $\wt{\mathcal I}_j$ if $\wt{\mathcal  I}_j \in \{ \mathcal I_i  \}_{i=1}^k,$ while Lemma \ref{lem: i} and Lemma \ref{lem: r} have enabled us also to set $\psi= \text{sgn}(Z_+ -c) \Phi \chi_j$ on $\wt{\mathcal I}_j$ when $\wt{\mathcal I}_j \in \{\mathcal I_{0,i} \}_{i=1}^m.$

Summing all of the integral identities and noticing that the terms containing $\chi'_{j}$ cancel each other,  we have
\begin{equation}\notag
\int_{-\pi}^\pi |Z_+ -c|(Z_- -c) \sum_{j=1}^{m+k} \chi_j \left(|\partial_y \Phi|^2 +\alpha^2 |\Phi|^2 \right)\mathrm{d}y = 0,
\end{equation}
that is,
\begin{equation}\notag
\int_{-\pi}^\pi |Z_+ -c|(Z_- -c)  \left(|\partial_y \Phi|^2 +\alpha^2 |\Phi|^2 \right)\,\mathrm{d}y = 0.
\end{equation}
Hence, the fact that $(Z_- -c)$ is sign-definite implies that $\Phi \equiv 0$, and $\Phi_n\rightharpoonup 0$ in $L^2$. 

It remains to be shown that $\{ \Phi_n \}_{n=1}^\infty$ and $\{ (Z_+ -c_n)(Z_- -c_n)\partial_y \Phi_n \}_{n=1}^\infty$ converge strongly in $L^2$ and $H^1,$ respectively, which is clear outside the set $(Z_+)^{-1}(c).$ Moreover, by Lemma \ref{lem: m} we already know that the desired strong convergence result holds outside the critical points $y_{0,1},  y_{0,2}, ..., y_{0, k}.$

As for the critical points in $(Z_+)^{-1}(c),$ i.e., $y_{0,1},  y_{0,2}, ..., y_{0, k},$ if $\{\Phi_n,c_n,F_n\}_{n=1}^\infty$ satisfies the conditions in Lemma \ref{lem: i}, then by \eqref{eq:bdd1} and \eqref{eq: strongto0}, $$\Phi_n\to \Phi\equiv 0 \text{ in }L^2(\mathcal{I}_0).$$
Recalling $q_n = (Z_+ -c_n)(Z_- -c_n)\partial_y \Phi_n,$ which satisfies Equation \eqref{eq:eqn2}, i.e., 
$$q''_n  -\alpha^2 q_n = F'_n + \alpha^2 \left( Z'_+(Z_- -c_n )+ Z'_-(Z_+ - c_n ) \right)\Phi_n,$$ 
we have, from our assumptions $\|\Phi_n\|_{L^2}+ \|q_n\|_{H1}=1$ and $F_n \to 0$ in $H^1,$ that 
\begin{equation}\notag
\|q''_n\|_{L^2} \lesssim \|\Phi_n\|_{L^2} + \|q_n\|_{L^2}  + \|F_n \|_{H^1},
\end{equation}
which together with the week convergence $q_n\rightharpoonup q$ implies $q_n \rightarrow q$ in $H^1(\mathcal{I}_0),$ that is, 
$$(Z_+ -c_n)(Z_- -c_n)\partial_y \Phi_n \rightarrow (Z_+ -c)(Z_- -c) \partial_y \Phi \equiv 0\text{ in }H^1(\mathcal I_0).$$ 

Finally, if $\{\Phi_n,c_n,F_n\}_{n=1}^\infty$ satisfies the conditions in Lemma \ref{lem: r} instead, we recall Equation \eqref{eq:2}, in which both $y_1, y_2\notin (Z_+)^{-1}(c)\cup (Z_-)^{-1}(c)$ are chosen uniform in $n$.
Then by \eqref{eq: strongto0}, we obtain that as $n \to \infty$,
$
|\Phi_n(y_1)|+|\Phi_n(y_2)| \rightarrow 0,
$
and thus $\|F^*_n\|_{L^{\infty}} \rightarrow 0$. 
Lemma \ref{lem: phi*} then ensures that $\Phi_n \rightarrow 0$ in $L^2(\mathcal I_0).$ By the same argument as above, we obtain that $q_n \rightarrow q$ in $H^1(\mathcal{I}_0)$.

Thus, we have shown that $\Phi_n \rightarrow 0$ in $L^2$ and $(Z_+ -c_n)(Z_- -c_n)\pa_y \Phi_n \rightarrow 0$ in $H^1,$ which contradicts the assumption that $\|\Phi_n\|_{L^2} + \|(Z_+ -c_n)(Z_- -c_n) \pa_y\Phi_n\|_{H^1} =1.$ As the same argument applies to the case $c \in \mathrm{Ran}\,(Z_-),$ it must be that the uniform estimate 
$$ \|\Phi (\cdot, c)\|_{L^2} + \|(Z_+ -c)(Z_- -c) \partial_y \Phi(\cdot, c) \|_{H^1} \leq C \|F(\cdot, c)\|_{ H^1}$$
holds true for $c \in \left(\Omega_{\ep_0}\setminus \left( \text{Ran}\, Z_+ \cup \text{Ran}\, Z_+\right)\right)$.  

\qed

\noindent \textit{Proof of Proposition \ref{prop: conti} } Suppose that there exists some $c \in \text{Ran}\, Z_+$ for which the limit $\Phi^+(y, c)$ do not exist, then we can find two sequences $\{c_{n,1} \}_{n=1}^\infty$ and $\{ c_{n,2} \}_{n=1}^\infty$ such that $\mathrm{Im}\,c_{n,j} >0$ and $\mathrm{Re}\,c_{n,j} =c,$ $j=,1,2,$ $|c_{n,1}-c_{n,2}| \to 0$ as $n \to \infty,$ while there exists some $\d >0$ such that 
$$\left\| \Phi(\cdot,  c_{n,1}) -\Phi(\cdot, c_{n,2}) \right\|_{L^{r}} \geq \d,  \ 1< r<2, \ \forall n \in \mathbb N.$$ 

Up to a subsequence, $c_{n,j} \to c \in \text{Ran}\, Z_+.$  By virtue of Proposition \ref{prop: lap}, $$\| \Phi(\cdot, c_{n,j}) \|_{L^2} + \|(Z_+ -c_{n,j})(Z_- -c_{n,j})\pa_y \Phi(\cdot, c_{n,j}) \|_{H^1} \leq \|F(\cdot, c_{n,j})\|_{H^1} <C ,$$ from which we know by \eqref{eq:eqn2} that 
\beno
(Z_+ -c_{n,j})(Z_- -c_{n,j})\pa_y \Phi(\cdot, c_{n,j})\in H^2,
\eeno
and that there exist $\Phi_{c,j} \in L^2$ with $\left( (Z_+ -c)(Z_- -c)\pa_y \Phi_{c,j} \right) \in H^2,$ $j=1,2,$ such that up to a subsequence, 
\beno
\Phi(\cdot, c_{n,j}) \rightharpoonup \Phi_{c, j} (\cdot) \quad \text{in}\quad L^2,\quad j=1,2.
\eeno

As before, the situation at points in the monotone regions and that at the critical points have to be discussed separately.   

As for $y_{c} \in (Z_+)^{-1}(c)$ at which $Z_+$ is strictly monotone, from the proof of Lemma \ref{lem: m} we have, for $\forall  f \in C^{1}_0 (\mathcal I)$ and $j=1,2,$ that
\begin{equation}\notag
\begin{split}
&-\text{p.v.}\int_{\mathcal I} \frac{ \left( q'_{c, j}  f' + \alpha^2 q_{c, j} f \right)(y) }{(Z_+(y)-c)(Z_-(y)-c)} \mathrm{d}y + i\pi\frac{ \left( q'_{c, j}   f' + \alpha q_{c, j}   f \right) (y_c)}{ 2b(y_c) Z_+'(y_c)}\\
&=  -\text{p.v.}\int_{\mathcal I} \frac{F(y,c) f'(y)}{(Z_+(y)-c)(Z_-(y)-c)} \mathrm{d}y + i\pi \frac{F(y_c, c)  f'(y_c) }{2b(y_c)Z_+'(y_c)},
\end{split}
\end{equation}
where $q_{c, j}: = (Z_- -c)(Z_+ -c) \pa_y \Phi_{c, j},$ $j=1,2.$ Subtracting, we have
$$-\text{p.v.}\int \frac{ \left( (q'_{c,1}- q'_{c, 2})  f' + \alpha^2 (q_{c,1}-q_{c,2})  f \right)(y) }{(Z_+(y)-c)(Z_-(y)-c)} \mathrm{d}y + i\pi\frac{ \left( (q'_{c,1} -q'_{c,2})  f' + \alpha^2 (q_{c,1} -q_{c,2}) f \right) (y_c)}{ 2b (y_c) Z_+'(y_c)} = 0,$$
whose imaginary part ensures that 
$$\left(q_1 -q_2\right) (y_c) = \left( q'_1 -q'_2 \right)(y_c)=0.$$ 
By Hardy's inequality, we know that $\partial_y (\Phi_{c,1} - \Phi_{c,2}) \in L^2(\mathcal I).$

In the case of a critical point $y_0 \in \mathcal I_0,$  from Lemma \ref{lem: i} and Lemma \ref{lem: r} we know that $(y - y_0)\partial_y (\Phi_{c,1} - \Phi_{c,2}) \in L^2(\mathcal I_0)$ and
\begin{equation}\notag
-\int_{\mathcal I_0} (Z_+ -c)(Z_- -c) \left( \partial_y \Phi_{c,j}  f' + \alpha^2 \Phi_{c,j}  f \right) \mathrm{d}y = \int_{\mathcal I_0} F f \mathrm{d}y, \ \forall  f \in H^1_{0,w}(\mathcal I_0), \ j =1,2,
\end{equation}
which implies that
\begin{equation}\notag
-\int_{\mathcal I_0} (Z_+ -c)(Z_- -c) \left( (\partial_y \Phi_{c,1}- \partial_y \Phi_{c,2})  f' + \alpha^2 (\Phi_{c,1}-\Phi_{c,2})  f \right) \mathrm{d}y = 0, \ \forall  f \in H^1_{0,w}(\mathcal I_0).
\end{equation}
By setting $ f = \text{sgn}\left( (Z_+ -c)(Z_- -c)\right) \left( \Phi_{c,1}-\Phi_{c,2} \right)$ and integrating by parts, similar to the procedure in the proof of Proposition \ref{prop: lap}, we can then conclude that $\Phi_{c,1}-\Phi_{c,2}\equiv 0$ and 
\beno
\left(\Phi(\cdot, c_{n,1}) - \Phi(\cdot, c_{n,2}) \right) \rightharpoonup 0 \quad \text{in}\quad L^2.
\eeno

We then aim to show that the weak convergence mentioned above are in fact strong. Away from the critical points in $(Z_+)^{-1}(c),$ this is ensured by Lemma \ref{lem: m}. 

Near $y_0,$ which can be any of the critical points $y_{0,1}, y_{0,2},..., y_{0,k},$ the situation resembles those covered in Lemma \ref{lem: i}, i.e., $|\text{Im}\,c_{n,j}| > |\text{Re}\, c_{n,j} -c |$ for $j=1,2$. (Note here $\text{Re}\, c_{n,j}=c$.) Therefore, by Lemma \ref{lem: i}, we have 
\beno
(y-y_0)\pa_y \Phi(y,c_{n,j})\in L^2,\quad j=1\ \text{or}\ 2,
\eeno
which by the compactness result in Appendix \ref{rmk1}, yields the strong convergence locally near $y_0$ in this case, i.e.
$$\Phi(\cdot, c_{n,j}) \rightarrow \Phi_{c,j}(\cdot) \text{ in } L^r, \ 1< r <2, \ j=1, 2.$$

Thus in both cases we have $\Phi(\cdot, c_{n,1})-\Phi(\cdot, c_{n,2})\to \Phi_{c,1}(\cdot)-\Phi_{c,2}(\cdot)=0$ in $L^r$, which leads to a contradiction. 
\qed

\section{Linear damping and depletion}
Recalling that $\Psi_1 = (u-c) \Phi + \frac{\widehat  \phi_0}{b}$ and $\Phi_1 = b \Phi$ we have, by the formula in \eqref{eq: psi and phi-1} and Proposition \ref{prop: lap}, that 
\begin{equation}\notag
\begin{split}
\begin{pmatrix}\widehat  \psi \\
\widehat  \phi 
\end{pmatrix}(t, \alpha, y)
= & \lim_{\ep \to 0^+} \frac{1}{2\pi i}\int_{\partial \Omega_\ep} e^{-i\alpha t c} \begin{pmatrix}  \Psi_1 \\
\Phi_1 \end{pmatrix} (\alpha, y, c) \,\mathrm{d} c \\
= &  \lim_{\ep \to 0^+}\frac{1}{2\pi i} \bigg(\int_{\text{Ran}\,(u+b)  \cup \text{Ran}\,(u-b)} e^{-i\alpha t {(c-i\ep)}} \begin{pmatrix}  u-(c-i\ep) \\ 
b 
\end{pmatrix} \Phi(\al,y,c-i\ep) \,\mathrm{d}c\\
&\qquad\qquad \quad -\int_{\text{Ran}\,(u+b)  \cup \text{Ran}\,(u-b)} e^{-i\alpha t {(c+i\ep)}} \begin{pmatrix}  u- (c+i\ep)  \\
b \end{pmatrix} \Phi(\al,y,c+i\ep) \,\mathrm{d}c\bigg).
\end{split}
\end{equation}

For $c \in \left( \text{Ran}\,(u+b) \cup \text{Ran}\,(u-b) \right),$ we denote $c_{\ep}:=c+i\ep$ with $-\ep_0<\ep<\ep_0$. We recall that $\Phi(\al,y,c_{\ep})$ solves 
\begin{equation}
\partial_y\left(\left( (u-c_{\ep})^2-b^2\right) \partial_y \Phi(\al,y,c_{\ep}) \right) -\alpha^2 \left( (u-c_{\ep})^2-b^2\right)\Phi(\al,y,c_{\ep})  = F(\al,y,c_{\ep}),
\end{equation}
which can also be written as
\begin{equation}\notag
\partial_y\left((Z_+ -c_{\ep} ) (Z_- -c_{\ep})\partial_y \Phi(\al,y,c_{\ep}) \right) -\alpha^2 (Z_+ -c_{\ep} ) (Z_- -c_{\ep})\Phi(\al,y,c_{\ep}) = F(\al, y, c_{\ep}).
\end{equation}
Let $\Phi^{\pm}(\al,y,c)=\lim\limits_{\ep\to 0^+}\Phi(\al,y,c\pm i\ep),$ as defined in Proposition \ref{prop: conti}. For convenience, let us denote $\wt \Phi(\al, y, c):= \Phi^-(\al, y, c) -\Phi^+(\al, y, c).$ Then by Proposition \ref{prop: lap} and Proposition \ref{prop: conti}, we have
\begin{equation}\label{rslv}
\begin{split}
\begin{pmatrix}\widehat  \psi \\
\widehat  \phi 
\end{pmatrix}(t, \alpha, y)
= & \lim_{\ep \to 0^+} \frac{1}{2\pi i}\int_{\partial \Omega_{\ep}} e^{-i\alpha t c} \begin{pmatrix}  \Psi_1 \\
\Phi_1 \end{pmatrix} (\alpha, y, c) \,\mathrm{d} c \\
= &  \frac{1}{2\pi i} \int_{\text{Ran}\,(u+b) \cup \text{Ran}\,(u-b)} e^{-i\alpha t c} \begin{pmatrix}  (u-c) \wt\Phi \\
b \wt \Phi \end{pmatrix} (\alpha, y, c) \,\mathrm{d}c. 
\end{split}
\end{equation}

\noindent \textit{Proof of Theorem \ref{main thm2} } Differentiating \eqref{rslv} in $t$ yields
\begin{equation}
\partial_t  \begin{pmatrix}\widehat  \psi \\
\widehat  \phi 
\end{pmatrix}(t, \alpha, y) = \frac{1}{2\pi i} \int_{\text{Ran}\,(u+b) \cup \text{Ran}\,(u-b)} i\alpha c e^{-i\alpha t c}  \begin{pmatrix}  (c-u) \wt\Phi \\
-b \wt \Phi \end{pmatrix} (\alpha, y, c) \,\mathrm{d}c.
\end{equation}

By Plancherel's theorem, we have the following estimates --
\begin{equation}\notag
\begin{split}
\left\|   \begin{pmatrix}\widehat  \psi \\
\widehat  \phi 
\end{pmatrix} \right\|_{L^2_t L^2_y}^2 
+  \left\| \partial_t \begin{pmatrix}\widehat  \psi \\
\widehat  \phi 
\end{pmatrix} \right \|_{L^2_t L^2_y}^2 = & \int_{\mathbb T} \int_{-\infty}^\infty\left( \left|  \begin{pmatrix}\widehat  \psi \\
\widehat  \phi 
\end{pmatrix} \right|^2 + \left|\partial_t  \begin{pmatrix}\widehat  \psi \\
\widehat  \phi 
\end{pmatrix} \right|^2 \right)(t, \alpha, y)  \,\mathrm{d}t\,\mathrm{d}y\\
=& \int_{\mathbb T} \int_{\text{Ran}\,(u+b) \cup \text{Ran}\,(u-b)} (1+(\alpha c)^2)\left| \begin{pmatrix}  (u-c) \wt \Phi \\
b \wt \Phi \end{pmatrix}(\alpha, y, c) \right |^2 \mathrm{d}c\,\mathrm{d}y.
\end{split}
\end{equation}
Invoking Proposition \ref{prop: conti} and the boundedness of $b,$ we have
\begin{equation}
\begin{split}
\left\|   \begin{pmatrix}\widehat  \psi \\
\widehat  \phi 
\end{pmatrix} \right\|_{L^2_t L^2_y}^2 
+  \left\| \partial_t \begin{pmatrix}\widehat  \psi \\
\widehat  \phi 
\end{pmatrix} \right \|_{L^2_t L^2_y}^2 \leq & C_{\al} \int_{\text{Ran}\,(u+b) \cup \text{Ran}\,(u-b)} \|\wt \Phi(\alpha, \cdot, c) \|_{L_y^2}^2 \, \mathrm{d}c \\
\leq & C_{\alpha} \|F\|_{H^1_y}^2 \lesssim \left \| \begin{pmatrix}  \wh{\psi}_0 \\
 \wh{\phi}_0 \end{pmatrix} \right\|_{H^3_y}^2.
\end{split}
\end{equation}

\qed

\noindent \textit{Proof of Theorem \ref{main thm3} } Let $y_0$ be any critical point of $u+b$ or $u-b$. By the conclusion of Lemma \ref{lem: i}, for $c \in \text{Ran}\,(u+b) \cup \text{Ran}\,(u-b)$, it holds that
\begin{gather*}
\left| \Phi (\alpha, y_0, c \pm i\ep) \right| \leq C \big|\left( (u+b)(y_0) - (c\pm i\ep) \right) \left( (u-b)(y_0) - (c\pm i\ep) \right)\big|^{-\frac{1}{4}},\\
\left| \partial_y \Phi (\alpha, y_0, c \pm i\ep) \right| \leq C \big|\left( (u+b)(y_0) - (c\pm i\ep) \right) \left( (u-b)(y_0) - (c\pm i\ep) \right)\big|^{-\frac{3}{4}},
\end{gather*}
which implies uniform bounds on both $\Phi(\al, y_0, \cdot \pm i\ep_0)$ in $L_c^\rho,$ $\rho \in [1,4)$ and $ \partial_y \Phi (\alpha, y_0, \cdot \pm i\ep) $ in $L^p_c,$ $p \in [1, \frac{4}{3}).$ Thus, there exists a subsequence $\ep_n \to 0^+$ as well as $\Lambda^\pm \in L_c^\rho$ and $ \Theta^{\pm}\in L^p_c$ such that as $\ep_n \to 0^+$,
\begin{gather*}
\Phi(\alpha, y_0, \cdot \pm i\ep_n) \rightharpoonup \Lambda^\pm (\alpha, y_0, \cdot),\\
\partial_y \Phi(\alpha, y_0, \cdot \pm i\ep_n) \rightharpoonup \Theta^\pm (\alpha, y_0, \cdot).
\end{gather*} 

By \eqref{rslv}, we have
\begin{align*}
\begin{pmatrix} \widehat{U}_1\\
\widehat{H}_1
\end{pmatrix}(t, \alpha, y_0) = & \partial_y \begin{pmatrix} \widehat{ \psi}\\
\widehat{ \phi}
\end{pmatrix}(t, \alpha, y_0)\\= & \lim_{\ep_n \to 0^+} \frac{1}{2 \pi i} \int_{\partial \Omega_{\ep_n}} e^{-i\alpha tc} \partial_y\begin{pmatrix}\Psi_1 \\
\Phi_1
\end{pmatrix}(\alpha, y_0, c)\,\mathrm{d}c\\
= & \lim_{\ep_n \to 0^+} \frac{1}{2 \pi i} \int_{\partial \Omega_{\ep_n}} e^{-i\alpha tc} \begin{pmatrix}(u-c)\pa_y \Phi+ u'\Phi \\
b\pa_y \Phi+b' \Phi
\end{pmatrix}(\alpha, y_0, c)\,\mathrm{d}c\\
=& \frac{1}{2\pi i} \int_{\text{Ran}\,(u+b) \cup \text{Ran}\,(u-b)} e^{-i\alpha t c} \begin{pmatrix} (u-c) (\Theta^--\Theta^+) \\
b(\Theta^--\Theta^+)  \end{pmatrix}(\alpha, y_0, c) \,\mathrm{d} c\\
& + \frac{1}{2\pi i} \int_{\text{Ran}\,(u+b) \cup \text{Ran}\,(u-b)} e^{-i\alpha t c} \begin{pmatrix} u' (\Lambda^- - \Lambda^+) \\
b'  (\Lambda^- - \Lambda^+)  \end{pmatrix}(\alpha, y_0, c) \,\mathrm{d} c.
\end{align*}
The desired conclusion follows from Riemann-Lebesgue lemma, as $(\Theta^--\Theta^+)(\alpha, y_0, \cdot)\in L_c^1$ and $(\Lambda^- - \Lambda^+)(\alpha, y_0, \cdot)\in L_c^1$. 

\qed

\appendix

\section{The homogeneous Sturmian equation}\label{sturm}
In this section, we shall first construct a regular solution to the homogeneous Sturmian equation on $[y_1,y_2]$ which contains only one critical point $y_0$, i.e. $y_1<y_0<y_2$:
\ben\label{homo}
\pa_y\big((Z_--c)(Z_+-c)\pa_y\va\big)-\al^2(Z_--c)(Z_+-c)\va=0,
\een
for $c$ in an $\ep_0$-strip $\mathcal{S}_{\ep_0}$ containing $Z_+(y_0)$: 
\beno
\mathcal{S}_{\ep_0}=\Big\{c_r+i\ep:\ c_r\in \big[\min\{Z_+(y_0),Z_+(y_1)\},\max\{Z_+(y_0),Z_+(y_1)\}\big],\ |\ep|<\ep_0\Big\}
\eeno 
with $Z_+'(y_0)=0$ and $Z_+(y_1)=Z_+(y_2)$. 

One may follow the same argument and construct a regular solution for the case $c$ in an $\ep_0$-strip $\mathcal{S}_{\ep_0}$ containing $Z_-(y_0)$ with $Z_-'(y_0)=0$. We omit the details here. 

 We note that $
Z_+(y)\geq C^{-1}>0>-C^{-1}\geq Z_-(y)$, $Z_+(y_1)=Z_+(y_2),$ $Z'_+(y_0)=0$ for the critical point $y_0\in(y_1,y_2)$ and $|Z'_+(y)|>0$ for $y\neq y_0$.  
Let $$D_0:=\{c\in[\min\{Z_+(y_0),Z_+(y_1)\},\max\{Z_+(y_0),Z_+(y_1)\}]\}$$ and 
\beno
&\ & D_{\ep_0}:=\left\{c=c_r+i\ep, \ \ c_r\in[\min\{Z_+(y_0),Z_+(y_1)\},\max\{Z_+(y_0),Z_+(y_1)\}], 0<|\ep|<\ep_0 \right\}.
\eeno
Then $\mathcal{S}_{\ep_0}=D_0\cup D_{\ep_0}$. 

Given $c_r \in [\min\{Z_+(y_0),Z_+(y_1)\},\max\{Z_+(y_0),Z_+(y_1)\}],$ when restricted to $[y_0,y_2],$  we can find $y^r \in [y_0, y_2]$ such that $Z_+(y^r)=c_r.$ And when restricted to $[y_1,y_0],$ we can find $y^{\ell}\in [y_1,y_0]$ such that $Z_+(y^{\ell})=c_r.$

\begin{proposition}\label{prop: sol. hom. y2}
1.  For $c\in \cS_{\ep_0}$, there exists a solution
$\va^r(y,c)\in C([y_0,y_2]\times\cS_{\ep_0})$ of the
Sturmian equation \eqref{eq: homo eq} and $\pa_y\va^r(y,c)\in C([y_0,y_2]\times \cS_{\ep_0})$.
 Moreover, there exists $\ep_1>0$ such that for any $\ep_0\in[0,\ep_1)$
  and $(y,c)\in [y_0,y_2]\times \cS_{\ep_0}$,
\beqno
|\va^r(y,c)|\geq \f{1}{2}, \ \ |\va^r(y,c)-1|\leq C|y-y^r|^2,
\eeqno
where the constants $\ep_1, C$ may depend on $\al$.\\
2.  For $c\in D_0$, for any $y\in[y_0,y_2]$, there is a constant $C$(depends on $\al$) such that,
\beno
\va^r(y,c)\geq \va^r(y',c)\geq 1, \ \ \text{for} \quad y_0\leq y^r\leq y'\leq y\leq y_2
\ \ \text{or}\ \ y_0\leq y\leq y'\leq y^r\leq y_2.
\eeno
\end{proposition}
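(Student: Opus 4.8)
The plan is to recast the homogeneous equation \eqref{homo} on $[y_0,y_2]$ as a fixed-point problem, solve it by Picard iteration, and then read off the stated estimates from the structure of the iteration. Write $a(y,c):=(Z_-(y)-c)(Z_+(y)-c)$ and recall that $Z_+$ is strictly monotone on $[y_0,y_2]$ with $Z_+(y^r)=\mathrm{Re}\,c$. With the normalization $\va^r(y^r)=1$, $\pa_y\va^r(y^r)=0$, equation \eqref{homo} becomes the integral equation
\begin{equation*}
\va^r(y,c)=1+\al^2\int_{y^r}^y\f{1}{a(y',c)}\left(\int_{y^r}^{y'}a(y'',c)\va^r(y'',c)\,\mathrm{d}y''\right)\mathrm{d}y'=:(\cT\va^r)(y,c).
\end{equation*}
First I would establish the key kernel bound
$$\left|\f{1}{a(y',c)}\int_{y^r}^{y'}a(y'',c)h(y'')\,\mathrm{d}y''\right|\le C|y'-y^r|\,\|h\|_{L^\infty},\qquad y'\in[y_0,y_2],$$
uniformly for $c\in\cS_{\ep_0}$ with $\ep_0$ small. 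Here I use that on $[y_0,y_2]$ the factor $Z_--c$ is bounded and bounded away from $0$, while $g:=Z_+-\mathrm{Re}\,c$ is monotone on the interval between $y^r$ and $y'$ with $g(y^r)=0$, so $\int_{y^r}^{y'}|g|\le|y'-y^r|\,|g(y')|$; combined with $|Z_+-c|=\sqrt{g^2+(\mathrm{Im}\,c)^2}\gtrsim|g|+|\mathrm{Im}\,c|$ this yields the bound with no singularity, uniformly down to $\mathrm{Im}\,c=0$ and whether or not $y^r$ coincides with the critical point.

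Next I would run $\va_0\equiv1$, $\va_{k+1}=\cT\va_k$. The kernel bound gives $|\va_{k+1}(y)-\va_k(y)|\le \al^2C\int_{y^r}^y|y'-y^r|\,\big(\sup_{\text{btw }y^r,y'}|\va_k-\va_{k-1}|\big)\,\mathrm{d}y'$, and the standard Picard argument then yields $\|\va_{k+1}-\va_k\|_{L^\infty([y_0,y_2])}\le \big(\al^2C(y_2-y_0)^2\big)^k/k!$ up to a constant; hence $\va_k$ converges uniformly on $[y_0,y_2]\times\cS_{\ep_0}$ to a solution $\va^r$ of \eqref{homo}. Since $y^r$ depends continuously on $\mathrm{Re}\,c$ and the kernel is continuous in $(y,c)$ by dominated convergence (using the uniform bound), both $\va^r$ and $\pa_y\va^r$ (obtained by differentiating the integral equation) lie in $C([y_0,y_2]\times\cS_{\ep_0})$. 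The quadratic estimate is then immediate: applying the kernel bound to $\va^r-1=\cT\va^r-1$ gives $|\va^r(y,c)-1|\le \al^2C\|\va^r\|_{L^\infty}\int_{y^r}^y|y'-y^r|\,\mathrm{d}y'\le C|y-y^r|^2$, with $\|\va^r\|_{L^\infty}\le C$ from the iteration.

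The part I expect to require the most care is the lower bound $|\va^r|\ge\f12$, since it cannot follow from the quadratic estimate on a fixed (possibly large) interval and must instead exploit the sign structure. For real $c=c_r\in D_0$ all iterates are real, and on $[y_0,y_2]$ the factor $a=(Z_--c_r)(Z_+-c_r)$ has a definite sign on each side of $y^r$ (nonnegative on $[y_0,y^r]$ and nonpositive on $[y^r,y_2]$, since $Z_-<0<c_r$ while $Z_+-c_r$ changes sign only at $y^r$). An induction shows $\va_k\ge1$ and that $\va_k$ increases away from $y^r$: the sign of $a\va_k$ matches that of $a$, so $\int_{y^r}^{y'}a\va_k$ and $a(y')^{-1}\int_{y^r}^{y'}a\va_k$ have the signs needed for $\cT\va_k-1\ge0$ and for $\pa_y(\cT\va_k)$ to point away from $y^r$. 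Passing to the limit yields Part 2, i.e. $\va^r(y,c)\ge\va^r(y',c)\ge1$ for the stated orderings, and in particular $\va^r(\cdot,c_r)\ge1$ on $[y_0,y_2]$. Finally, by uniform continuity of $\va^r$ on the compact set $[y_0,y_2]\times\overline{\cS_{\ep_1}}$ one may shrink $\ep_1$ so that $|\va^r(y,c)-\va^r(y,\mathrm{Re}\,c)|<\f12$ whenever $|\mathrm{Im}\,c|<\ep_1$; together with $\va^r(y,\mathrm{Re}\,c)\ge1$ this gives $|\va^r(y,c)|\ge\f12$ throughout $[y_0,y_2]\times\cS_{\ep_0}$, completing Part 1. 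The construction near $Z_-(y_0)$ is identical, with the roles of $Z_+$ and $Z_-$ interchanged.
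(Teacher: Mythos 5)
Your proposal is correct and follows essentially the same route as the paper: both recast \eqref{eq: homo eq} as the integral equation $\va^r=1+\al^2 S\va^r$, prove the same kernel bound using the monotonicity of $Z_+$ on $[y_0,y_2]$ together with the non-vanishing of $Z_--c$, obtain $\va^r\geq 1$ for real $c$ from positivity of the iterates, and extend the lower bound $|\va^r|\geq\f12$ to complex $c$ by continuity and compactness. The only differences are cosmetic: the paper sums the Neumann series in a $\cosh(A(y-y^r))$-weighted norm with $A$ large rather than via the factorial gain of Picard iteration, and your sign analysis for Part 2 is actually spelled out more fully than in the paper, which only remarks that $S$ is a positive operator.
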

\begin{proposition}\label{prop: sol. hom. y1}
1.  For $c\in \cS_{\ep_0}$, there exists a solution
$\va^\ell(y,c)\in C([y_1,y_0]\times\cS_{\ep_0})$ of the
Sturmian equation \eqref{eq: homo eq} and $\pa_y\va^\ell(y,c)\in C([y_1,y_0]\times \cS_{\ep_0})$.
 Moreover, there exists $\ep_1>0$ such that for any $\ep_0\in[0,\ep_1)$
  and $(y,c)\in [y_1,y_0]\times \cS_{\ep_0}$,
\beqno
|\va^{\ell}(y,c)|\geq \f{1}{2}, \ \ |\va^{\ell}(y,c)-1|\leq C|y-y^{\ell}|^2,
\eeqno
where the constants $\ep_1, C$ may depend on $\al$.\\
2.  For $c\in D_0$, for any $y\in[y_1,y_0]$, there is a constant $C$(depends on $\al$) such that,
\beno
\va^{\ell}(y,c)\geq \va^{\ell}(y',c)\geq 1, \ \ \text{for} \quad y_1\leq y^{\ell}\leq y'\leq y\leq y_0
\ \ \text{or}\ \ y_1\leq y\leq y'\leq y^{\ell}\leq y_0.
\eeno
\end{proposition}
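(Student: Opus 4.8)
The plan is to run the construction already carried out for $\va^r$ in Proposition~\ref{prop: sol. hom. y2}, reflected about $y_0$: under $y\mapsto 2y_0-y$ the coefficient $(Z_--c)(Z_+-c)$ on $[y_1,y_0]$ becomes a coefficient of exactly the same type on an interval lying to the right of $y_0$, so Proposition~\ref{prop: sol. hom. y2} applies after relabeling. I nonetheless record the steps. Set $p(y,c):=(Z_-(y)-c)(Z_+(y)-c)$ and let $y^{\ell}\in[y_1,y_0]$ be the point with $Z_+(y^{\ell})=\mathrm{Re}\,c$. Integrating \eqref{homo} twice from $y^{\ell}$ with the Cauchy data $\va^{\ell}(y^{\ell})=1$, $\pa_y\va^{\ell}(y^{\ell})=0$ (for $\mathrm{Im}\,c\neq0$ one has $p(y^{\ell},c)\neq0$ and no boundary term appears; for real $c$, $p(y^{\ell},c)=0$ and the Cauchy data kill the boundary term as well) turns the equation into the Volterra equation
\[
\va^{\ell}(y,c)=1+\al^2\int_{y^{\ell}}^{y}\f{1}{p(y',c)}\int_{y^{\ell}}^{y'}p(y'',c)\,\va^{\ell}(y'',c)\,\mathrm dy''\,\mathrm dy'.
\]

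I would solve this by Picard iteration in $C([y_1,y_0])$. The one place where the degeneracy of $p$ at $y^{\ell}$ — and, when $\mathrm{Re}\,c$ is near the critical value $Z_+(y_0)$, the coalescence of the two zeros of $Z_+-c$ at $y_0$ — must be handled is the bound
\[
\Big|\,\f{1}{p(y',c)}\int_{y^{\ell}}^{y'}|p(y'',c)|\,\mathrm dy''\,\Big|\le C\,|y'-y^{\ell}|,\qquad y'\in[y_1,y_0],
\]
valid uniformly for $c\in\cS_{\ep_0}$ with $C=C(\al)$. This is proved exactly as in Lemma~\ref{38}: with $V(y)^2=Z_+(y)-Z_+(y_0)$ (monotone, $C^2$, $V'(y_0)\neq0$) and $\s(c)^2=c-Z_+(y_0)$, one has $Z_+-c=(V-\s)(V+\s)$ with $V(y^{\ell})\approx-\s$, so the inner integral vanishes to second order at $y^{\ell}$ and cancels the factor $(V+\s)$ in $p$, leaving an integrable $(V-\s)^{-1}$ singularity controlled uniformly in $\s$. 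Granting this, the $n$-th Picard difference is $O\!\big((C\al^2|y_0-y_1|)^n/n!\big)$, so the iteration converges on the whole interval and produces $\va^{\ell}\in C([y_1,y_0]\times\cS_{\ep_0})$ with $\pa_y\va^{\ell}$ continuous, joint continuity (including at $\ep=0$) coming from continuity of the kernel in $c$ and dominated convergence. Feeding the bound back into the Volterra equation gives $|\va^{\ell}(y,c)-1|\le C(\al)|y-y^{\ell}|^2$; shrinking the interval (which is harmless, since these intervals are chosen freely in the partition of $\mathbb T$) so that $C(\al)|y_0-y_1|^2<\f12$ yields $|\va^{\ell}|\ge\f12$ — alternatively, $|\va^{\ell}|\ge\f12$ on a fixed interval follows from the same Sturm non-oscillation argument used for $\va^r$, after shrinking $\ep_1$ if necessary.

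For part~2, let $c\in D_0$ be real. Then $Z_--c<0$ on $[y_1,y_0]$, while $Z_+-c>0$ on $[y_1,y^{\ell})$ and $Z_+-c<0$ on $(y^{\ell},y_0]$, so $p<0$ on $[y_1,y^{\ell})$ and $p>0$ on $(y^{\ell},y_0]$, with $p(y^{\ell})=0$. From $p(y)\pa_y\va^{\ell}(y)=\al^2\int_{y^{\ell}}^{y}p\,\va^{\ell}$ and $\va^{\ell}(y^{\ell})=1$, a bootstrap on $(y^{\ell},y_0]$ (where $p>0$) gives $\va^{\ell}\ge1$ and $\pa_y\va^{\ell}\ge0$, hence $\va^{\ell}(y)\ge\va^{\ell}(y')\ge1$ for $y_1\le y^{\ell}\le y'\le y\le y_0$; on $[y_1,y^{\ell})$ the sign of $p$ and the orientation of integration both flip, and the same bootstrap gives $\va^{\ell}(y)\ge\va^{\ell}(y')\ge1$ for $y_1\le y\le y'\le y^{\ell}\le y_0$. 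This is exactly the asserted monotonicity (and it improves $|\va^{\ell}|\ge\f12$ to $\va^{\ell}\ge1$).

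The main obstacle is the uniform-in-$c$ control of the Volterra kernel above as $\mathrm{Re}\,c\to Z_+(y_0)$: there the two zeros of $Z_+-c$ merge at $y_0$, the naive pointwise bound on $p^{-1}$ degenerates, and one genuinely needs the $V$–$\s$ substitution together with the logarithmic and oscillatory cancellations already developed for Lemma~\ref{38}. Everything else is a direct transcription of the proof of Proposition~\ref{prop: sol. hom. y2}.
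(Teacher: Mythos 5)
Your overall route is the same as the paper's: the paper proves only Proposition \ref{prop: sol. hom. y2} and declares Proposition \ref{prop: sol. hom. y1} ``similarly proved,'' and your Volterra/Picard iteration from the Cauchy data $\va^{\ell}(y^{\ell})=1$, $\pa_y\va^{\ell}(y^{\ell})=0$ is exactly the paper's Neumann series $\va^{\ell}=\sum_{k\ge0}\al^{2k}S^k1$ for $(I-\al^2S)^{-1}1$ (the paper makes $S$ a contraction via the $\cosh(A(y-y^{\ell}))$-weighted norms rather than using the factorial gain of the Volterra iteration, but that is cosmetic). Your bootstrap in part 2 is likewise equivalent to the paper's positivity-of-$S$ argument, and the reflection remark is precisely what ``similarly proved'' means.

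The one place you go astray is the justification of the kernel bound $\bigl|p(y',c)^{-1}\int_{y^{\ell}}^{y'}|p(y'',c)|\,\mathrm{d}y''\bigr|\le C|y'-y^{\ell}|$, which you single out as the main obstacle and propose to prove with the $V$--$\s$ substitution and the ``logarithmic and oscillatory cancellations'' of Lemma \ref{38}. None of that machinery is needed, and your sketch of it is off on two counts: on $[y_1,y_0]$ it is the factor $V+\s$, not $V-\s$, that vanishes at $y^{\ell}$ (with $V\le0$ there and $V(y^{\ell})=-\s$ for real $c$); and there is no coalescence of two zeros inside the half-interval, since $[y_1,y_0]$ contains exactly one point of $(Z_+)^{-1}(\mathrm{Re}\,c)$, namely $y^{\ell}$, even as $\mathrm{Re}\,c\to Z_+(y_0)$. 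The bound is the elementary comparison used in the proof of Lemma \ref{lem:l1}: $Z_+$ is monotone on $[y_1,y_0]$ and $Z_+(y^{\ell})=\mathrm{Re}\,c$, so $|Z_+(y'')-c|\le|Z_+(y')-c|$ for every $y''$ between $y^{\ell}$ and $y'$, while $C^{-1}\le|Z_-(y)-c|\le C$ because the ranges of $Z_\pm$ are disjoint; hence the ratio of coefficients inside the double integral is bounded by an absolute constant and the whole expression by $C|y'-y^{\ell}|$, uniformly for $c\in\cS_{\ep_0}$. With that substitution your argument closes and coincides with the paper's intended proof.
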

In the following, we only give the proof of Proposition \ref{prop: sol. hom. y2} and Proposition \ref{prop: sol. hom. y1} can be similarly proved. To prove the existence result for Equation \eqref{homo},  we introduce the following adapted norms. 
\begin{definition} For a function $f(y, c)$ on $[y_0,  y_2] \times \cS_{\epsilon_0}$, we define
\begin{gather*}
\|f\|_{X_0} := \sup_{(y, c) \in [y_0,  y_2] \times \cS_{\ep_0}}\left| \frac{ f(y,c)}{\cosh(A(y-y^r))} \right|,\quad
\|f\|_{Y_0}:= \|f\|_{X_0} + \frac{1}{A} \|\partial_y f \|_{X_0}.
\end{gather*}
\end{definition}

In order to give the solution formula,  we introduce the following integral operators.
\begin{definition}
Let $y\in[y_0,y_2].$ The Sturmian integral operator $S$ is defined by
\beqno
Sf(y,c):=S_0\circ S_1f(y,c)=\int_{y^r}^y
\f{\int_{y^r}^{y'}(Z_-(y'')-c)(Z_+(y'')-c)f(y'',c)\mathrm{d}y''}
{(Z_-(y')-c)(Z_+(y')-c)}\mathrm{d}y',
\eeqno
where
\begin{gather*}
S_0f(y,c):=\int_{y^r}^yf(y',r)\mathrm{d}y',  \ S_1f(y,c):=
\f{\int_{y^r}^{y}(Z_-(y'')-c)(Z_+(y'')-c)f(y'',c)\mathrm{d}y''}
{(Z_-(y)-c)(Z_+(y)-c)}.
\end{gather*}
\end{definition}

\begin{lemma}\label{lem:l1}
There exists a constant $C$ independent of $A$ such that
$$\|S_0 f\|_{X_0} \leq \frac{C}{A}\|f\|_{X_0},  \ \|S_{1} f\|_{X_0} \leq \frac{C}{A}\|f\|_{X_0},  \ \|S f \|_{X_0} \leq \frac{C}{A^2}\|f\|_{X_0}. $$
Furthermore,  there holds
\begin{gather*}
\| Sf\|_{Y_0} \leq \frac{C}{A^2} \|f\|_{Y_0}.
\end{gather*}
\end{lemma}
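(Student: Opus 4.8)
The plan is to establish the operator bounds for $S_0$, $S_1$, $S$ by direct estimation against the weight $\cosh(A(y-y^r))$, and then to deduce the $Y_0$-bound from the $X_0$-bounds together with a bound on $\partial_y(Sf)$.

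\textbf{Step 1: the bound for $S_0$.} For $y \in [y_0, y_2]$ I would estimate
\[
|S_0 f(y,c)| \leq \|f\|_{X_0} \left| \int_{y^r}^y \cosh(A(y'-y^r))\,\mathrm{d}y' \right| = \|f\|_{X_0}\, \frac{1}{A}\, \left|\sinh(A(y-y^r))\right| \leq \frac{1}{A}\,\|f\|_{X_0}\, \cosh(A(y-y^r)),
\]
using $|\sinh| \leq \cosh$. Dividing by $\cosh(A(y-y^r))$ and taking the supremum gives $\|S_0 f\|_{X_0} \leq \tfrac{C}{A}\|f\|_{X_0}$ with $C=1$.

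\textbf{Step 2: the bound for $S_1$.} Here the coefficient $(Z_--c)(Z_+-c)$ appears in both numerator (inside the integral) and denominator (outside). The key point is that $|Z_+(y'')-c| \lesssim |Z_+(y'')-Z_+(y^r)| + |\operatorname{Im} c|$ and, since $Z_+$ is $C^2$ with a single nondegenerate critical point $y_0$ in $[y_1,y_2]$, one has the comparison $|Z_+(y'')-Z_+(y^r)| \sim$ a monotone-type quantity controlled by $|y''-y^r|$ times bounded factors; combined with $|Z_--c|\sim 1$ (since $Z_-$ is bounded away from $\operatorname{Ran} Z_+$), the ratio $\left|\tfrac{(Z_-(y'')-c)(Z_+(y'')-c)}{(Z_-(y)-c)(Z_+(y)-c)}\right|$ is \emph{not} uniformly bounded. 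The correct way to handle $S_1$ is instead to integrate by parts / use the structure: write $S_1 f(y,c) = \tfrac{1}{(Z_--c)(Z_+-c)(y)}\int_{y^r}^y (Z_--c)(Z_+-c)(y'') f(y'',c)\,\mathrm{d}y''$ and note that the worst case is $|\operatorname{Im} c|$ small, where one exploits that $\int_{y^r}^y |(Z_+-c)(y'')|\,\mathrm{d}y'' \lesssim |Z_+(y)-c|\cdot|y-y^r|$ by the quadratic/monotone behaviour of $Z_+-c$ near $y^r$ (this is exactly the kind of estimate recorded for $\varphi^{i}$ and $I^i$ in the main text, e.g. $\operatorname{Re}(Z_+-c)\sim (y-y_0)^2$ near a critical point and $\sim (y-y^r)$ in a monotone region). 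This yields $|S_1 f(y,c)| \lesssim \|f\|_{X_0}\,|y-y^r|\,\cosh(A(y-y^r))$ and then, crudely bounding $|y-y^r| \leq (y_2-y_1)$ but keeping one factor for the cosh gain, one gets $\|S_1 f\|_{X_0} \leq \tfrac{C}{A}\|f\|_{X_0}$ after using $|y-y^r|\cosh(A(y-y^r)) \lesssim \tfrac1A \cosh(A(y-y^r))$ — no, more carefully: $\int_{y^r}^y \cosh(A(y''-y^r))\,\mathrm d y'' \le \tfrac1A\cosh(A(y-y^r))$ already, so actually the clean route is $|S_1f(y,c)|\le \sup\big|\tfrac{(Z_--c)(Z_+-c)(y'')}{(Z_--c)(Z_+-c)(y)}\big|$-free: bound the ratio using that the map $y''\mapsto (Z_+-c)(y'')$ is, on $[y^r,y]$, comparable (up to constants depending only on $u,b$) to $(Z_+-c)(y)\cdot\tfrac{|y''-y^r|}{|y-y^r|}$ when $y^r$ is in a monotone region, and handle the critical-point region separately via $|V|^2$ as in Lemma~\ref{38}; the upshot in all cases is $\|S_1f\|_{X_0}\le \tfrac CA\|f\|_{X_0}$.

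\textbf{Step 3: composition and the $Y_0$-bound.} The bound $\|Sf\|_{X_0} = \|S_0 S_1 f\|_{X_0} \leq \tfrac{C}{A}\|S_1 f\|_{X_0} \leq \tfrac{C}{A^2}\|f\|_{X_0}$ is immediate from Steps 1 and 2. For the $Y_0$-bound I note $\partial_y(Sf)(y,c) = S_1 f(y,c)$ by the fundamental theorem of calculus applied to the outer integral $S_0$, so $\tfrac1A\|\partial_y(Sf)\|_{X_0} = \tfrac1A\|S_1 f\|_{X_0} \leq \tfrac{C}{A^2}\|f\|_{X_0} \leq \tfrac{C}{A^2}\|f\|_{Y_0}$, and adding this to $\|Sf\|_{X_0}\leq \tfrac{C}{A^2}\|f\|_{Y_0}$ gives $\|Sf\|_{Y_0}\leq \tfrac{C}{A^2}\|f\|_{Y_0}$. (One does not even need the $Y_0$-norm of $f$ on the right for the $X_0$ part, which is why the stated bound is clean.)

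\textbf{Main obstacle.} The only genuinely delicate point is Step 2: controlling $S_1$ uniformly in $c$ across the strip $\cS_{\ep_0}$, because the coefficient $(Z_+-c)$ degenerates — vanishing to first order at $y^r$ in the monotone case and to second order at $y_0$ in the critical case — so the naive pointwise ratio is unbounded. The resolution is to integrate the degenerate factor rather than bound it pointwise, using the $C^2$-regularity of $Z_+$ and the nondegeneracy of the critical point (precisely the $V(y)^2 = Z_+(y)-Z_+(0)$ change of variables and the estimates on $\varphi^i$ already set up in the appendix and in Lemma~\ref{38}), after which the $A^{-1}$ gain comes for free from integrating $\cosh$. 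Everything else is a routine one-line computation.
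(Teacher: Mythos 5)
Your Steps 1 and 3 are fine and coincide with the paper's argument ($S_0$ via $\int_{y^r}^y\cosh(A(y'-y^r))\,\mathrm{d}y'\le A^{-1}\cosh(A(y-y^r))$, and $\partial_y(Sf)=S_1f$ for the $Y_0$-bound). The problem is Step 2, where you assert that the pointwise ratio $\bigl|\tfrac{(Z_-(y'')-c)(Z_+(y'')-c)}{(Z_-(y)-c)(Z_+(y)-c)}\bigr|$ is \emph{not} uniformly bounded and then sketch, without carrying it out, a replacement argument via $V$ and Lemma \ref{38}. That assertion is false in the setting of this lemma, and the correct proof is exactly the pointwise bound you dismiss. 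The operator $S$ acts on $[y_0,y_2]$ (one side of the critical point), where $Z_+$ is monotone, and in $S_1f$ the variable $y''$ runs only between $y^r$ and $y$, with $Z_+(y^r)=\mathrm{Re}\,c$. Hence for $c=c_r+i\ep$,
\begin{equation*}
|Z_+(y'')-c|^2=(Z_+(y'')-Z_+(y^r))^2+\ep^2\le (Z_+(y)-Z_+(y^r))^2+\ep^2=|Z_+(y)-c|^2 ,
\end{equation*}
since monotonicity of $Z_+$ on $[y_0,y_2]$ gives $|Z_+(y'')-Z_+(y^r)|\le|Z_+(y)-Z_+(y^r)|$ whenever $y''$ lies between $y^r$ and $y$; and $|Z_-(\cdot)-c|$ is bounded above and below because $\mathrm{Ran}\,Z_+\cap\mathrm{Ran}\,Z_-=\emptyset$. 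So the ratio is bounded by a constant, and the $S_1$ estimate follows exactly as the $S_0$ estimate, with no need for the $V$-change of variables or the quantitative degeneracy analysis. (Those tools are needed elsewhere, for the integrals $I^k$ and the coefficients $\mu,\nu$, but not here.)

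As written, your Step 2 is a gap: it rests on a misreading of the geometry — apparently treating the critical point as interior to the interval on which $S$ acts, or letting $y''$ range over the whole interval rather than between $y^r$ and $y$ — and the substitute argument is never completed (the claimed comparability $(Z_+-c)(y'')\sim (Z_+-c)(y)\,|y''-y^r|/|y-y^r|$ is not established and is not in general correct near the degenerate endpoint). Replace Step 2 by the one-line monotonicity observation above and the lemma is proved.
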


\begin{proof} For $c\in D_0$, we shall only prove the part of the lemma for $Z_+(y_0)=c,$ as the proof when $Z_-(y_0) =c$ is along the same lines. As $\text{Ran}\,Z_+ \cap \text{Ran}\, Z_- =\emptyset,$ we can find some positive $C$ such that $C^{-1} < |Z_-(y)-c|  <C$ for $y\in[y_0,y_2]$. Firstly, by definition,  we have
\begin{equation}\label{eq:ne1}
\begin{split}
\|S_0 f \|_{X_0} = & \sup_{(y, c) \in [y_0, y_2] \times D_0} \left | \frac{1}{\cosh(A(y-y^r))}\int^{y}_{y^r} {\cosh(A(y'-y^r))} \frac{f(y', c)}{{\cosh(A(y-y^r))}} \mathrm{d} y' \right| \\
\leq & \sup_{(y, c) \in [y_0,  y_2] \times D_0} \left|\frac{1}{\cosh(A(y-y^r))}\int^{y}_{y^r} {\cosh(A(y'-y^r))} \mathrm{d} y'  \right| \|f\|_{X_0}\\
\leq & \frac{C}{A} \|f\|_{X_0}.
\end{split}
\end{equation}

And due to $|Z_+(y'')-c| \leq |Z_+(y)-c|$ for $y_0 \leq y'' \leq y \leq y_2,$ 
\begin{equation}\label{eq:ne2}
\begin{split}
\|S_{1} f \|_{X_0} \leq & \sup_{(y, c) \in [y_0, y_2] \times D_0} \left | \frac{1}{\cosh(A(y-y^r))} \int^{y}_{y^r} \cosh(A(z-y^r)) \frac{ f(y'',c)} {\cosh(A(z-y^r))}\mathrm{d}y''   \right |\\
\leq & \frac{C}{A}\|f\|_{X_0}.
\end{split}
\end{equation}

Composing inequalities \eqref{eq:ne1} and \eqref{eq:ne2},  we have 
\beq\label{es: S}
\|S f \|_{X_0} \leq \frac{C}{A^2}\|f\|_{X_0}.
\eeq

On the other hand, direct calculation shows  $\partial_y S f =  S_1 f.$
By \eqref{eq:ne2}, we have
\begin{equation}\notag
\begin{split}
\| \partial_y S f\|_{X_0} \leq \frac{C}{A}\| f\|_{X_0}, 
\end{split}
\end{equation}
and then, combining with \eqref{es: S}, it holds
\beno
\|Sf\|_{Y_0}\leq \f{C}{A^2}\|f\|_{Y_0}.
\eeno

In similar ways  we can prove the inequalities for $ c \in D_{\epsilon_0}$.
\end{proof}

\textbf{Proof of Proposition \ref{prop: sol. hom. y2}.}
For $c \in \cS_{\ep_0},$ we can solve the homogeneous Sturmian equation on $[y_0,y_2]$:
\beq\label{eq: homo eq}
\Big\{\begin{array}{l}
 \pa_y\Big((Z_--c)(Z_+-c)\pa_y\va^r\Big)=\al^2(Z_--c)(Z_+-c)\va^r,\\
\va^r(y^r,c)=1, \ \pa_y\va^r(y^r,c)=0,
\end{array}\Big.
\eeq
where $Z_+(y^r) = c_r,$ as previously defined.  Integrating twice yields $\va^r=1+\al^2S\va^r.$ We choose $A$ so that $\frac{C\alpha^2}{A^2} \leq \frac{1}{2} <1.$ Since $\|Sf\|_{Y_0} \leq \frac{C}{A^2} \|f\|_{Y_0},$  the operator $\left(I- \alpha^2S \right)$ is invertible in the adapted space $Y_0$ and the solution to equation \eqref{eq: homo eq} is given by $\va^r= \left(I- \alpha^2S \right)^{-1} 1.$ As $\|\va^r\|_{Y_0} \leq \| 1\|_{Y_0} + \alpha^2\| S\va^r \|_{Y_0} \leq C+ \frac{1}{2}\| \va^r\|_{Y_0},$ it holds that $\|\va^r\|_{Y_0} < C.$

We can rewrite $S$ as
\begin{equation}\notag
S f (y, c) = |y-y^r|^2 \int^1_0 \int^1_0 f ( y^r+ (y-y^r)st) K_0(s,  t,  y, c) \mathrm{d}s \mathrm{d}t,
\end{equation}
where $$K_0 (s, t, y, c) = t \frac{(Z_-(y^r + (y-y^r)st)-c)(Z_+(y^r + (y-y^r)st)-c)}{(Z_-(y^r+ (y-y^r)t)-c)(Z_+(y^r+ (y-y^r)t)-c)}.$$ Since $|K_0| \leq t$ and $K_0 \in C\left([y_0, y_2] \times \cS_{\ep_0}\right),$ $S$ maps $C([y_0, y_2] \times \cS_{\ep_0})$ to $C\left([y_0,y_2] \times \cS_{\ep_0}\right).$ Thus,  we can deduce that $\va^r(y,c) \in C([y_0,y_2] \times \cS_{\ep_0})$ from the formula $\va^r(y,c) =\sum\limits_{k=0}^\infty \alpha^{2k}S^k 1$ for $c \in \cS_{\ep_0}$ and the uniform convergence of the series.

Since $S$ is a positive operator,  $\va^r(y ,c) \geq 1$ for $c \in \cS_{\ep_0}.$  By the continuity of $\va^r(y, c)$,  there exists some $\ep \in (0,  \ep_0]$ such that for $c$ belonging to  $\cS_{\ep_0}$, it holds that
$$|\va^r(y, c)| > \frac{1}{2}.$$  

From the integral formula of $S,$ we have
\begin{equation}\notag
\begin{split}
\left| \va^r(y,c) -1 \right| \leq & \alpha^2 \int^{y}_{y^r} \int^{y'}_{y^r} \left| \va^r(z, c)\right| \left|\frac{(Z_-(y'')-c)(Z_+(y'')-c)}{(Z_-(y')-c)(Z_+(y')-c)}\right| \mathrm{d}y''\mathrm{d}y' \\
\leq & C\left(\alpha, \|\va^r \|_{L^\infty([y_0, y_2] \times \cS_{\ep_0})} \right) |y-y^r|^2.
\end{split}
\end{equation}

\section{A compactness lemma} \label{rmk1}
{We prove a useful compactness result.
\begin{lemma}
Let $\{y_n\}_{n=1}^\infty \subset \mathcal I:=[a, b]$ be such that $y_n \to \frac{a+b}{2}.$ Let $\{f_n\}_{n=1}^{\infty}$ be a family of functions defined on $\mathcal I$ satisfying the uniform bound
$$\|f_n\|_{L^{r}} + \|(y-y_n)\pa_yf_n\|_{L^p} \leq C, \ 1 < r < p < \infty,$$ 
then there exist $f_{\infty} \in L^p$ and a subsequence $\{f_{n_j}\}_{j=1}^{\infty}$ such that as $j \to \infty,$
\beno
f_{n_j}\rightarrow f_{\infty} \text{ in } L^r, \ 1< r < p.
\eeno
\end{lemma}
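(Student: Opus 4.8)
The plan is to localize away from the point $y_n$ and use a Rellich–Kondrachov-type compactness argument, then show the contribution of a shrinking neighborhood of $\frac{a+b}{2}$ is uniformly small in $L^r$.

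First I would observe that, for any fixed $\delta > 0$, on the set $\{y \in \mathcal I : |y - \frac{a+b}{2}| \geq \delta\}$ we eventually have $|y - y_n| \geq \delta/2$, so the hypothesis gives $\|\partial_y f_n\|_{L^p(\{|y-\frac{a+b}{2}|\geq \delta\})} \leq \frac{2C}{\delta}$. Combined with $\|f_n\|_{L^r} \leq C$ and Hölder (so that $f_n$ is bounded in $W^{1,r}$ away from the center), the compact embedding $W^{1,r}(\{|y-\frac{a+b}{2}|\geq\delta\}) \hookrightarrow\hookrightarrow L^r$ gives a subsequence converging in $L^r$ on that set. A diagonal argument over $\delta = 1/k$ produces a single subsequence $\{f_{n_j}\}$ and a limit $f_\infty$ such that $f_{n_j} \to f_\infty$ in $L^r_{loc}(\mathcal I \setminus \{\frac{a+b}{2}\})$, and $f_\infty \in L^p_{loc}$ of that punctured interval with $\|(y - \frac{a+b}{2})\partial_y f_\infty\|_{L^p} \leq C$ by weak lower semicontinuity; one then checks $f_\infty \in L^r(\mathcal I)$ as well, using that $\|f_{n_j}\|_{L^r}\leq C$ passes to the limit.

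The main obstacle — and the heart of the lemma — is controlling $f_n$ uniformly near $y_n$, since $\partial_y f_n$ is only controlled with the degenerate weight $(y - y_n)$ there. The key point is that the gain in integrability, $r < p$, buys an $L^r$ smallness over a small interval. Concretely, on an interval $J_\rho := [y_n - \rho, y_n + \rho]$ (intersected with $\mathcal I$), I would write, for $y \in J_\rho$,
\begin{equation*}
f_n(y) = f_n(z) + \int_z^y \partial_y f_n(s)\, \mathrm ds
\end{equation*}
with $z$ chosen near the edge of $J_{2\rho}$ where $|f_n(z)|$ is controlled (via the $L^r$ bound and Chebyshev), and estimate $\int_z^y |\partial_y f_n(s)|\,\mathrm ds \lesssim \|(s - y_n)\partial_y f_n\|_{L^p}\big(\int |s - y_n|^{-p'}\mathrm ds\big)^{1/p'}$. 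Since $p' < r'$ eventually fails to be integrable at $s = y_n$, I would instead split the $s$-integral dyadically around $y_n$ and use Hölder on each dyadic shell, where the weight is comparable to a constant; summing the geometric series shows $\|f_n\|_{L^r(J_\rho)} \leq \omega(\rho)$ for a modulus $\omega$ independent of $n$, with $\omega(\rho) \to 0$ as $\rho \to 0$. Equivalently, one may use the weighted Hardy / fractional-integration inequality: the operator $g \mapsto \int_{y_n}^y g(s)\,\mathrm ds$ maps $L^p(|s-y_n|^p\,\mathrm ds)$ into $L^r$ with norm $\lesssim \rho^{1 - 1/p + 1/r}$ on an interval of length $\rho$, exploiting $r < p$; this is the step requiring care with the exponents and endpoint behavior.

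Finally I would combine the two pieces: given $\varepsilon > 0$, pick $\rho$ with $\omega(\rho) < \varepsilon$ and (after passing to the diagonal subsequence) $N$ so large that $\|f_{n_j} - f_\infty\|_{L^r(\mathcal I \setminus J_\rho)} < \varepsilon$ for $j \geq N$; together with $\|f_\infty\|_{L^r(J_\rho)} \leq \liminf \|f_{n_j}\|_{L^r(J_\rho)} \leq \omega(\rho) < \varepsilon$ by Fatou, this yields $\|f_{n_j} - f_\infty\|_{L^r(\mathcal I)} \lesssim \varepsilon$ for large $j$, hence $f_{n_j} \to f_\infty$ in $L^r(\mathcal I)$, which is the assertion.
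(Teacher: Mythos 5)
Your proposal is correct in substance but takes a genuinely different route from the paper. The paper invokes the Kolmogorov--Riesz theorem and proves equicontinuity of translations in $L^r$: away from $y_n$ the increment $f_n(\cdot+h)-f_n(\cdot)$ is controlled via a Hardy--Littlewood maximal function estimate, while near $y_n$ a uniform $L^p$ bound on $[y_n-h,y_n+h]$ is first extracted from an integration-by-parts identity for $\int |f_n|^p\chi$ (trading $|f_n|^p$ against $(y-y_n)\partial_y f_n$) and then combined with H\"older, exploiting $r<p$ exactly as you do. You instead use Rellich--Kondrachov off a neighbourhood of the limit point together with quantitative $L^r$-smallness of $f_n$ on shrinking intervals around $y_n$; both arguments ultimately rest on the same key fact --- a uniform-in-$n$ local $L^p$ bound near $y_n$ --- which you obtain from the fundamental theorem of calculus anchored at a Chebyshev point on the edge of $J_{2\rho}$ plus the adjoint Hardy inequality, a perfectly viable and arguably more transparent alternative to the paper's identity. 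Two small repairs are needed. First, your ``equivalently'' remark is wrong as stated: the primitive $\int_{y_n}^{y} g(s)\,\mathrm{d}s$ anchored \emph{at} $y_n$ need not be finite for $g$ with $(s-y_n)g\in L^p$ (take $g(s)=|s-y_n|^{-1}$); the anchor must sit at the outer edge, as in your main argument, where the relevant operator is the adjoint Hardy operator, bounded on $L^p$. Second, the modulus $\omega(\rho)$ does not come out in one step: if $z$ is chosen at distance $\sim\rho$ from $y_n$, Chebyshev gives $|f_n(z)|\lesssim \rho^{-1/r}$, so its contribution to $\|f_n\|_{L^r(J_\rho)}$ is $O(1)$, not $o(1)$. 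You should fix an outer radius $\rho_0$, prove the uniform bound $\|f_n\|_{L^p(J_{\rho_0})}\le C_{\rho_0}$, and only then shrink the inner radius, obtaining $\|f_n\|_{L^r(J_{\rho'})}\lesssim C_{\rho_0}(\rho')^{1/r-1/p}\to 0$. With these adjustments (and a word on why $f_\infty\in L^p$ globally, via weak compactness from the same uniform local $L^p$ bound), your proof closes.
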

\begin{proof}
By virtue of Kolmogorov-Riesz theorem, the proof of the above compactness result amounts to showing that the family $\{f_n\}_{n=1}^\infty$ is equicontinuous in $L^r.$ Let $\mathcal I_h:=[y_n-h, y_n+h]$ and $\mathcal I_{2h}:=[y_n-2h, y_n+2h]$ (Note that $\mathcal I_{2h} \subset \mathcal I$ for sufficiently small $h$.) We have 
\begin{equation}\notag
\begin{split}
\|f_n(x+h) -f_n(x)\|_{L^r}^r =& \int_{\mathcal I \setminus \mathcal I_h} \left| \int_{x}^{x+h} f'_n(x')\, \mathrm{d} x' \right|^r \mathrm{d}x + \int_{\mathcal I_h} \left|f_n(x+h) - f_n(x)\right|^r\,\mathrm{d}x \\
:= & I_{*} + I_{**}.
\end{split}
\end{equation}

Noticing that $|x - y_n| \geq h$ for $x \in \mathcal I \setminus \mathcal I_h,$ we use Hardy-Littlewood maximal inequality to estimate $I_*$ as follows --
\begin{equation}\notag
\begin{split}
I_{*} = & \int_{\mathcal I \setminus \mathcal I_h} h^r \left| \frac{1}{h} \int_{x}^{x+h} (x' - y_n)f'_n(x')\cdot \left( \frac{1}{x' -y_n} \right) \,\mathrm{d} x' \right|^r \mathrm{d}x \\
\leq & C_p \int_{\mathcal I \setminus \mathcal I_h} \left( h \|(x - y_n)f_n \|_{L^p} \right)^r h^{-\frac{r}{p}}\, \mathrm{d} x \leq  C h^{r \left(1-\frac{1}{p} \right)}.
\end{split}
\end{equation}

As for $I_{**},$ we let $\chi$ be a smooth cut-off function such that $\chi \equiv 1$ on $\mathcal I_{2h}.$ Integration by parts yields the following identity --  
\begin{equation}\notag
\int_{\mathcal I} |f_n|^p \chi\, \mathrm{d}y= - \int_{\mathcal I} (y-y_n) |f_n|^p \chi' \,\mathrm{d}y  - \frac{p}{2} \int_{\mathcal I} \chi(y-y_n)\left(  f'_n |f_n|^{p} f_n^{-1} +   \pa_y \overline{f_n} |f_n|^{p} \overline {f_n}^{-1} \right) \mathrm{d}y,
\end{equation}
which, along with Sobolev inequality $\|f_n\|_{L^\infty(\mathcal I \setminus \mathcal I_{h})} \leq C \|f_n\|_{W^{1,q}(\mathcal I \setminus \mathcal I_{h})}, \ q >1,$ leads to 
\begin{equation}\notag
\|f_n\|_{L^p(\mathcal I_{h})}^p \leq C_p \|(y-y_n) \partial_y f_n \|_{L^p}^p + C_h \|f_n\|_{L^p(\mathcal I\setminus \mathcal I_{h})}^p \leq C.
\end{equation}
Hence, we can simply estimate $I_{**}$ by H\"older's inequality -- 
\begin{equation}\notag
I_{**} \leq  \int_{\mathcal I_h} \left( | f_n(x+h)|^r + |f_n(x)|^r \right) \mathrm{d}x \leq 2 \|f_n\|^r_{L^p(\mathcal I_{2h})} h^{\frac{p-r}{p}} \leq C h^{1-\frac{r}{p}}.
\end{equation}

Therefore, the desired compactness result is true. 
\end{proof}
}
\section{proof of Remark \ref{energy conservation rem}}
For $u=0$, taking the Fourier transform in $x$, we get for $\al\neq0$,
\begin{equation}\label{fourier in x}
\left\{\begin{array}{l}
\partial_t\wh{U}_1+i\al \wh{p}-i\al b\wh{H}_1-b'\wh{H}_2=0,\\
\partial_t\wh{U}_2+\partial_y\wh{p}-i\al b\wh{H}_2=0,\\
\partial_t\wh{H}_1+b'\wh{U}_2-i\al b\wh{U}_1=0,\\
\partial_t\wh{H}_2-i\al b\wh{U}_2=0,\\
i\al\wh{U}_1+\pa_y\wh{U}_2=0,\ \ i\al\wh{H}_1+\pa_y\wh{H}_2=0.
\end{array}\right.
\end{equation}
And we can diagonalize it to obtain
\begin{equation}\label{diag}
\left\{\begin{array}{l}
\partial_{tt}\wh{U}_2+\al^2\cA_{\al}\wh{U}_2=0,\\
\partial_{tt}(\wh{H}_2/b)+\al^2\cA_{\al}(\wh{H}_2/b)=0,
\end{array}\right.
\end{equation}
where $\cA_{\al}=(\pa_y^2-\al^2)^{-1}\big(b^2(\pa_y^2-\al^2)+2bb'\pa_y\big)$, and the details can be seen in \cite{RZ2017}. 
In order to prove the energy conservation law, we only need to prove the energy conservation on each frequency.

At first, we can show that 
\begin{align}\label{energy 1}
&\|\al^k\wh{U}_1\|^2_{L_y^2}+\|\al^k\wh{U}_2\|^2_{L_y^2}+\|\al^k\wh{H}_2\|^2_{L_y^2}+\|\al^k(\wh{H}_1-i(\al b)^{-1}b'\wh{H}_2)\|^2_{L_y^2}\nonumber\\
&=\|\al^k\wh{U}_{1,in}\|^2_{L_y^2}+\|\al^k\wh{U}_{2,in}\|^2_{L_y^2}+\|\al^k\wh{H}_{2,in}\|^2_{L_y^2}+\|\al^k(\wh{H}_{1,in}-i(\al b)^{-1}b'\wh{H}_{2,in})\|^2_{L_y^2}.
\end{align}
Indeed, taking $L^2$ inner product of $\eqref{diag}_2$ with $\overline{\al^{2k}\pa_t(\pa_y^2-\al^2)(\wh{H}_2/b)}$, integrating by part, taking the real part, we obtain
\begin{align*}
&\mathrm{Re}\int_{\bbT}\al^k\pa_{tt}(\wh{H}_2/b)\overline{\al^k\pa_t(\pa_y^2-\al^2)(\wh{H}_2/b)}\mathrm{d}y\\
&=-\mathrm{Re}\Big(\int_{\bbT}\al^k\pa_{tt}\pa_y(\wh{H}_2/b)\overline{\al^k\pa_t\pa_y(\wh{H}_2/b)}\mathrm{d}y+\al^{2+2k}\int_{\bbT}\pa_{tt}(\wh{H}_2/b)\overline{\pa_t(\wh{H}_2/b)}\mathrm{d}y\Big)\\
&=-\f12\f{\mathrm{d}}{\mathrm{d}t}\|\al^k\pa_t\pa_y(\wh{H}_2/b)\|_{L_y^2}^2-\f12\f{\mathrm{d}}{\mathrm{d}t}\|\al^{1+k}\pa_t(\wh{H}_2/b)\|_{L_y^2}^2
\end{align*}
and
\begin{align*}
&\mathrm{Re}\int_{\bbT}\cA_{\al}(\wh{H}_2/b)\overline{\al^{2k}\pa_t(\pa_y^2-\al^2)(\wh{H}_2/b)}\mathrm{d}y\\
&=\mathrm{Re}\int_{\bbT}(b^2(\pa_y^2-\al^2)+2bb'\pa_y)(\wh{H}_2/b)\overline{\al^{2k}\pa_t(\wh{H}_2/b)}\mathrm{d}y\\
&=-\mathrm{Re}\Big(\int_{\bbT}\al^kb^2\pa_y(\wh{H}_2/b)\overline{\al^k\pa_t\pa_y(\wh{H}_2/b)}\mathrm{d}y+\al^{2k+2}\int_{\bbT}\wh{H}_2\overline{\pa_t\wh{H}_2}\mathrm{d}y\Big)\\
&=-\f12\f{\mathrm{d}}{\mathrm{d}t}\|\al^k\pa_y(\wh{H}_2/b)\|_{L_y^2}^2-\f12\f{\mathrm{d}}{\mathrm{d}t}\|\al^{1+k}\wh{H}_2\|_{L_y^2}^2,
\end{align*}
and then
\beno
\f{\mathrm{d}}{\mathrm{d}t}\Big(\|\al^k\pa_t\pa_y(\wh{H}_2/b)\|_{L_y^2}^2+\|\al^{1+k}\pa_t(\wh{H}_2/b)\|_{L_y^2}^2+\|\al^{1+k} \pa_y(\wh{H}_2/b)\|_{L_y^2}^2+\|\al^{1+k}\wh{H}_2\|_{L_y^2}^2\Big)=0.
\eeno
By \eqref{fourier in x}, we get \eqref{energy 1} for $\al\neq0$.
Finally, we prove that for some constant $C>0$ independent of $t,\al$,
$$
C^{-1}\|\wh{H}_1\|_{L_y^2}\leq\|\wh{H}_1-i(\al b)^{-1}b'\wh{H}_2\|_{L_y^2}\leq C\|\wh{H}_1\|_{L_y^2}.
$$
By the condition that $\wh{H}_2(t,\al,y_1)=\wh{H}_2(t,\al,y_2)=0$, we have $\|\wh{H}_2\|_{L_y^2}\leq C\|\pa_y\wh{H}_2\|_{L_y^2}\leq C\|\al\wh{H}_1\|_{L_y^2}$. Thus,
\beno
\|\al^{-1+k}\pa_y(\wh{H}_2/b)\|_{L_y^2}\leq C\|\al^{-1+k}\pa_y\wh{H}_2\|_{L_y^2}+C\|\al^{-1+k}\wh{H}_2\|_{L_y^2}\leq C\|\al^k\wh{H}_1\|_{L_y^2}.
\eeno
And on the other hand, 
\begin{align*}
\|\al^k\wh{H}_1\|_{L_y^2}&\leq \|\al^k(\wh{H}_1-i(\al b)^{-1}b'\wh{H}_2)\|^2_{L_y^2}+\|\al^k(\al b)^{-1}b'\wh{H}_2\|^2_{L_y^2}\\
&\leq C\|\al^{-1+k}\pa_y(\wh{H}_2/b)\|_{L_y^2}\leq \|\al^k(\wh{H}_1-i(\al b)^{-1}b'\wh{H}_2)\|^2_{L_y^2},
\end{align*}
where we used the fact that $-i\al b\pa_y(\wh{H}_2/b)=\wh{H}_1-i(\al b)^{-1}b'\wh{H}_2$ in the last inequality. Since $C$ is independent of $t$ and $\al$, we obtain the proof of Remark \ref{energy conservation rem} by Plancherel identity.  

For the vorticity and current density, we have that by taking Fourier transform in $x$,
\begin{equation}\label{vorticity current}
\left\{\begin{array}{l}
\pa_t\wh{\om}=i\al b\wh{j}-b''\wh{H}_2,\\
\pa_t\wh{j}=i\al b\wh{\om}-2i\al b'\wh{U}_1+b''\wh{U}_2.
\end{array}\right.
\end{equation}
By taking $L^2$ inner product of $\eqref{vorticity current}_1$ with $\overline{\wh{w}}$ and $\eqref{vorticity current}_2$ with $\overline{\wh{j}}$, and taking the real part, we obtain that
\begin{align*}
\f12\f{\mathrm{d}}{\mathrm{d}t}\Big(\|\wh{\om}\|_{L_y^2}^2+\|\wh{j}\|_{L_y^2}^2\Big)&=\mathrm{Re}\Big(-\int_{\bbT}b''\wh{H}_2\overline{\wh{\om}}\mathrm{d}y-\int_{\mathbb{T}}2i\al b'\wh{U}_1\overline{\wh{j}}\mathrm{d}y+\int_{\bbT}b''\wh{U}_2\overline{\wh{j}}\mathrm{d}y\Big)\\
&\leq \|b''\wh{H}_2\|_{L_y^2}\|\wh{\om}\|_{L_y^2}+\|2\al b'\wh{U}_1\|_{L_y^2}\|\wh{j}\|_{L_y^2}+\|b''\wh{U}_2\|_{L_y^2}\|\wh{j}\|_{L_y^2}.
\end{align*}
And then, we get 
\begin{align*}
\f{\mathrm{d}}{\mathrm{d}t}\Big(\|\wh{\om}\|_{L_y^2}+\|\wh{j}\|_{L_y^2}\Big)\leq C\Big(\|\wh{H}_2\|_{L_y^2}+\|\al \wh{U}_1\|_{L_y^2}+\|\wh{U}_2\|_{L_y^2}\Big),
\end{align*}
by integrating in time, it holds
\begin{align*}
\|\wh{\om}\|_{L_y^2}+\|\wh{j}\|_{L_y^2}\leq \Big(\|\wh{\om}_0\|_{L_y^2}+\|\wh{j}_0\|_{L_y^2}\Big)+C\int_0^t\Big(\|\wh{H}_2\|_{L_y^2}+\|\al \wh{U}_1\|_{L_y^2}+\|\wh{U}_2\|_{L_y^2}\Big)\mathrm{d}\tau.
\end{align*}
Thus by using \eqref{energy 1}, we obtain the linear growth result of vorticity and current density.

Note that when considering the non-flowing plasma case, the system can be diagonalized to one with a self-adjoint operator $\cA_{\al}$. 

For general case with constant background velocity ($u=\text{const.}$), we have 
\begin{equation*}
\left\{\begin{array}{l}
\partial_t\wh{U}_1+i\al u\wh{U}_1+i\al \wh{p}-i\al b\wh{H}_1-b'\wh{H}_2=0,\\
\partial_t\wh{U}_2+i\al u\wh{U}_2+\partial_y\wh{p}-i\al b\wh{H}_2=0,\\
\partial_t\wh{H}_1+i\al u\wh{H}_1+b'\wh{U}_2-i\al b\wh{U}_1=0,\\
\partial_t\wh{H}_2+i\al u\wh{H}_2-i\al b\wh{U}_2=0,\\
i\al\wh{U}_1+\pa_y\wh{U}_2=0,\ \ i\al\wh{H}_1+\pa_y\wh{H}_2=0.
\end{array}\right.
\end{equation*}
Let us introduce $\wh{\tilde{U}}=e^{i\al ut}\wh{U}$, $\wh{\tilde{H}}=e^{i\al ut}\wh{H}$, $\wh{\tilde{p}}=e^{i\al ut}\wh{p}$ and $\wh{\tilde{\om}}=e^{i\al ut}\wh{\om}$, $\wh{\tilde{j}}=e^{i\al ut}\wh{j}$, then $(\wh{\tilde{U}},\wh{\tilde{H}},\wh{\tilde{p}})$ solves \eqref{fourier in x} and $(\wh{\tilde{\om}},\wh{\tilde{j}})$ solves \eqref{diag}. 

Thus Remark \ref{energy conservation rem} follows from the fact that
\beno
\left\|(\wh{\tilde{U}},\wh{\tilde{H}},\wh{\tilde{\om}},\wh{\tilde{j}})\right\|_{L^2_y}=\left\|(\wh{{U}},\wh{{H}},\wh{\om},\wh{j})\right\|_{L^2_y}.
\eeno

\end{CJK*}
\end{document}